\newtheorem{thm}{Theorem}[section]
\newtheorem{cor}[thm]{Corollary}
\newtheorem{proposition}[thm]{Proposition}
\newtheorem{lem}[thm]{Lemma}
\theoremstyle{definition}
\newtheorem{defn}[thm]{Definition}
\theoremstyle{remark}
\newtheorem{example}[thm]{Example}
\let\c@equation\c@thm
\numberwithin{equation}{section}
\begin{document}
\title{The Geometry of Subgroup Embeddings and Asymptotic Cones}
\author{Andy Jarnevic}
\maketitle
\begin{abstract}
Given a finitely generated subgroup $H$ of a finitely generated group $G$ and a non-principal ultrafilter $\omega$, we consider a natural subspace, $Cone^{\omega}_{G}(H)$, of the asymptotic cone of $G$ corresponding to $H$. Informally, this subspace consists of the points of the asymptotic cone of $G$ represented by elements of the ultrapower $H^{\omega}$. We show that the connectedness and convexity of $Cone^{\omega}_{G}(H)$ detect natural properties of the embedding of $H$ in $G$. We begin by defining a generalization of the distortion function and show that this function determines whether $Cone^{\omega}_{G}(H)$ is connected. We then show that whether $H$ is strongly quasi-convex in $G$ is detected by a natural convexity property of $Cone^{\omega}_{G}(H)$ in the asymptotic cone of $G$.
\end{abstract}
 
\tableofcontents 

\section{Introduction}

The asymptotic cone of a group $G$ is a metric space which captures certain aspects of the coarse geometry of $G$. Roughly speaking, the asymptotic cone is how the group looks from infinitely far away, and is constructed by taking a certain limit of scaled down copies of the group viewed as a metric space. The roots of asymptotic cones come from a paper of Gromov proving that finitely generated groups of polynomial growth are nilpotent \cite{Gromov1}. Van den Dries and Wilkie added non-standard analysis to the construction in this paper, formally introducing asymptotic cones \cite{VDW}. Since then, several other standard algebraic and geometric properties of groups have been shown to have natural parallels in their asymptotic cones. For instance, a finitely generated group is  virtually abelian if and only if all of its asymptotic cones are quasi-isometric to $\mathbb{R}^{n}$ for some $n \in \mathbb{N}$ \cite{Gromov2}, and a finitely-generated group is hyperbolic if and only if all of its asymptotic cones are $\mathbb{R}$-trees \cite{Gromov2}. 

Given a group $G$ and an ultrafilter $\omega$, we will denote the asymptotic cone of $G$ with respect to $\omega$ by $Cone^{\omega}(G)$. The goal of this paper is to study the way that geometric properties of embeddings of subgroups in groups can be detected using asymptotic cones. In order to accomplish this, we define a natural subspace of $Cone^{\omega}(G)$ corresponding to a subgroup $H$. Essentially, points in the asymptotic cone of a group $G$ can be represented by certain elements of the ultrapower $G^{\omega}$. We denote by $Cone^{\omega}_{G}(H)$ the subspace of $Cone^{\omega}(G)$ consisting of points with a representative from $H^{\omega}$. For the formal definition of this subspace, see Definiton 4.10.

\par
The first property of $Cone^{\omega}_{G}(H)$ we study is connectedness. We show that whether $Cone^{\omega}_{G}(H)$ is connected is closely related to a generalization of the distortion function of $H$ in $G$.

\begin{defn}
Let $H$ be a subgroup of a group $G$, with $G = \langle X \rangle$  and $H = \langle Y \rangle$ where $X$ and $Y$ are finite sets. The $\textit{distortion}\textrm{ } \textit{function}$ of $H$ in $G$ with respect to $X$ and $Y$ is defined by the formula $$\Delta^{G,X}_{H,Y}(n) = \textrm{max}\{|h|_{Y} \mid h \in H,  \, |h|_{X} \leq n\},$$ where $|h|_{Y}$ denotes the word length of $h$ with respect to the generating set $Y$. A subgroup $H$ of a group $G$ is called $\textit{undistorted}$ if $\Delta^{G,X}_{H,Y}$ is bounded from above by a linear function.
\end{defn}

We consider distortion up to the following equivalence relation.

\begin{defn} For non-decreasing functions $f,g\colon \mathbb{N} \rightarrow \mathbb{N}$, we write that $f \preceq g$ if there exists a constant $C$ such that $f(n) \leq Cg(Cn)$ for all $n \in \mathbb{N}$. We write $f \sim g$ if $f \preceq g$ and $g \preceq f$. \end{defn}

Under this equivalence, distortion is independent of the choice of the finite generating set.

\begin{defn}
Assume that $X$ is a finite generating set for a group $G$, and $H$ is a subgroup of $G$ such that $X$ contains a generating set for $H$. We define the $\textit{generalized distortion function}$, $\mu_{H}^{G,X}(m,n)\colon \mathbb{N} \times \mathbb{N} \rightarrow \mathbb{R}$ by the formula $$\mu_{H}^{G,X}(m,n) = \textrm{max}\{|h|_{Y_{m}}\mid h \in H, |h|_{X} \leq n\} =  \Delta^{G,X}_{H,Y_{m}}(n)$$ where $Y_{m} = \{h \in H \mid |h|_{X} \leq m\}$. 
\end{defn}

We consider generalized distortion functions up to the following equivalence.
\begin{defn}
Given two functions $f,g\colon \mathbb{N} \times \mathbb{N}\rightarrow \mathbb{R}$ which are non-increasing in the first variable, and non-decreasing in the second variable, we write $f \preceq g$ if there exists a constant $C \in \mathbb{N}$ such that $$f(Cm,n) \leq Cg(m,Cn)+C$$ for all $m,n \in \mathbb{N},$ and we say that $f \cong g$ if $f \preceq g$ and $g \preceq f$.
\end{defn}
Under this equivalence, $\mu_{H}^{G,X}(n)$ is independent of the choice of  the finite generating set $X$ of $G$, so we use $\mu^{G}_{H}$ to mean $\mu^{G,X}_{H}$ where $X$ is some finite generating set of $G$. For example, if $H$ is undistorted in $G$, then $$\mu^{G}_{H}(m,n) \cong  \frac{n}{m}.$$ 

We show that the generalized distortion function determines whether $Cone^{\omega}_{G}(H)$ is connected. Specifically, we prove the following result, which also shows that for such a subspace connectedness is equivalent to path-connectedness.

\begin{defn}
We say that a function $f\colon\mathbb{R}^{\geq 1} \times \mathbb{R}^{\geq 0} \rightarrow \mathbb{R}$ is \textit{homogeneous} if $f(r,s) = g(\frac{s}{r})$ for some function $g\colon \mathbb{R}^{\geq 0} \rightarrow \mathbb{N}$.
\end{defn}
\begin{thm} (Theorem 4.13)
    For any finitely generated group $G$ and any subgroup $H$, the following conditions are equivalent.
\begin{enumerate}

\item $H$ is finitely generated and $\mu^{G}_{H}(m,n)$ is bounded from above by a homogeneous function.

\item $Cone_{G}^{\omega}(H)$ is path connected for all non-principal ultrafilters $\omega$.

\item $Cone_{G}^{\omega}(H)$ is connected for all non-principal-ultrafilters $\omega$.
\end{enumerate}
\end{thm}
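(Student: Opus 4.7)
The plan is to prove the cycle $(1) \Rightarrow (2) \Rightarrow (3) \Rightarrow (1)$. The implication $(2) \Rightarrow (3)$ is immediate, so the content lies in $(1) \Rightarrow (2)$ and $(3) \Rightarrow (1)$.

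For $(1) \Rightarrow (2)$, fix a point $p = [(h_n)] \in Cone^{\omega}_{G}(H)$ with $|h_n|_X \leq C d_n$, let $g$ be the homogeneous function bounding $\mu^{G}_{H}$, and set $M := g(2)$. The key input is that any $h \in H$ of $X$-length at most $2L$ factors as a product of at most $M$ elements of $Y_{L}$, by definition of $\mu^{G}_{H}$. I would iterate this bisection: at stage $k$, each factor from stage $k-1$ (of $X$-length $\leq Cd_n/2^{k-1}$) is split into at most $M$ pieces of $X$-length $\leq Cd_n/2^k$, producing a refining sequence of factorizations with $\leq M^k$ factors overall (padding with the identity as needed). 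Taking partial products and projecting to the cone defines a map
$$\gamma\colon \bigl\{j/M^k : k \geq 0,\, 0 \leq j \leq M^k\bigr\} \longrightarrow Cone^{\omega}_{G}(H)$$
that is consistent between stages. The oscillation of $\gamma$ inside a single stage-$k$ block is controlled by the geometric series $\sum_{\ell > k} M \cdot C / 2^{\ell} = MC/2^k$ in cone units, so two $M$-adic rationals within $M^{-k}$ are sent to points at cone distance $O(M/2^k)$. This shows $\gamma$ is Hölder continuous with exponent $\log 2 / \log M$ on a dense subset of $[0,1]$, and since $Cone^{\omega}_{G}(H)$ is closed in $Cone^{\omega}(G)$ (a diagonal argument on representatives suffices), $\gamma$ extends uniquely to a continuous path from $e$ to $p$ in $Cone^{\omega}_{G}(H)$.

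For $(3) \Rightarrow (1)$ I argue by contrapositive. If $H$ is not finitely generated, then $H_m := \langle Y_m \rangle$ is a proper subgroup of $H$ for every $m$; a short union-exhaustion argument produces $R > 0$, a sequence $m_k \to \infty$, and elements $h_k \in H \setminus H_{m_k}$ with $|h_k|_X \leq R m_k$. If instead $H$ is finitely generated but $\mu^{G}_{H}$ is not dominated by any homogeneous function, then one extracts sequences $m_k, n_k$ with $n_k/m_k$ bounded and $h_k \in H$ of $X$-length $\leq n_k$ but $|h_k|_{Y_{m_k}} \to \infty$. In either situation, fix a non-principal $\omega$ and rescale by $d_k := m_k$, so that $p := [(h_k)]$ lies at bounded distance from $e$ in $Cone^{\omega}_{G}(H)$. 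Since any two points of a connected metric space can be joined by an $\epsilon$-chain for every $\epsilon > 0$ (by the standard open/closed argument), assuming $Cone^{\omega}_{G}(H)$ is connected gives points $e = q_0, \ldots, q_N = p$ in $Cone^{\omega}_{G}(H)$ with $d(q_i, q_{i+1}) < 1/2$. Choosing $H$-valued representatives and unwinding, for $\omega$-almost all $k$ each consecutive difference $q_{i,k}^{-1} q_{i+1,k}$ lies in $Y_{m_k}$, exhibiting $h_k$ as a product of $N$ elements of $Y_{m_k}$. This contradicts $h_k \notin H_{m_k}$ in the first case and $|h_k|_{Y_{m_k}} \to \infty$ in the second.

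The main obstacle is the continuity estimate in $(1) \Rightarrow (2)$: because the bisection factor $M$ can exceed $2$, the sum of factor lengths grows along the refinement and the natural parametrization is Hölder rather than Lipschitz. Controlling the accumulated oscillation within each dyadic block by a geometric series in the cone metric, and verifying that the limit remains representable by sequences in $H$, are the delicate points. The contrapositive in $(3) \Rightarrow (1)$ also requires care in the non-finitely generated case, where $\mu^{G}_{H}$ is not even defined and one must instead work with membership in the exhausting subgroups $H_m = \langle Y_m \rangle$.
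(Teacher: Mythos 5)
Your proposal is correct and follows essentially the same strategy as the paper: the forward direction is the paper's iterated-subdivision construction (a uniformly continuous map on a dense set of $M$-adic rationals, extended using completeness and the closedness of $Cone^{\omega}_{G}(H)$), and the converse is the paper's $\epsilon$-chain argument applied to a rescaled ultralimit of worst-case elements, with your ``union-exhaustion'' step for the non-finitely-generated case being exactly the paper's irreducible-element argument (an irreducible $h$ with $|h|_{X}=L$ lies outside $\langle Y_{L-1}\rangle$ and satisfies $L\leq 2(L-1)$). The only cosmetic differences are that the paper factors the argument through a general statement about the invariant $\nu_{S}$ of a transitive metric space and keeps the standard scaling sequence by choosing the ultrafilter to concentrate on the bad indices, whereas you work directly with group factorizations and change the scaling sequence, which is justified by the scaling-versus-ultrafilter equivalence the paper cites.
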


This theorem enables us to relate the ordinary distortion function to the connectedness of $Cone^{\omega}_{G}(H),$ and to construct pairs $H \leq G$  such that $Cone^{\omega}_{G}(H)$ is disconnected, but the distortion of $H$ in $G$ is small. Consider the following properties of a finitely generated subgroup $H$ of a finitely generated group $G$:
\begin{enumerate}[label=(\alph*)] \item $H$ is undistorted in $G$, \item $Cone^{\omega}_{G}(H)$ is connected for all non-principal ultrafilters $\omega$, \item $\Delta^{G}_{H}$ is bounded by a polynomial function. 
\end{enumerate}
The following theorem collects the relationship between these three properties.
\begin{thm} (Theorem 4.19)
For any finitely generated subgroup $H$ of a finitely generated group $G$, the following implications hold:
$$(a) \Rightarrow (b) \Rightarrow (c)$$ 
Further, the missing implications do not hold. Specifically, we have the following. \begin{enumerate} \item  For any $k \in \mathbb{N}$, there exists a finitely generated group $G$ and a finitely generated subgroup $H$ of $G$ such that $\Delta^{G}_{H}(n) \sim n^{k}$ and $Cone^{\omega}_{G}(H)$ is connected for any non-principal ultrafilter $\omega$.  \item For any real number $\epsilon > 0$, there exists a finitely generated group $G$ with a finitely generated subgroup $H$ such that $\Delta^{G}_{H}(n) \preceq n^{1+\epsilon}$ but $Cone^{\omega}_{G}(H)$ is disconnected for some non-principal ultrafilter $\omega$. 
\end{enumerate}
\end{thm}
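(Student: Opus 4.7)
The plan is to derive both implications directly from Theorem 4.13 and then construct explicit examples witnessing the failure of the reverse implications.

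For $(a)\Rightarrow(b)$: if $H$ is undistorted then $\mu^{G}_{H}(m,n)\cong n/m$, as noted after the definition of $\mu^{G}_{H}$; this is of the form $g(n/m)$ with $g(t)=t$, so it is homogeneous, and Theorem 4.13 yields (b). For $(b)\Rightarrow(c)$: Theorem 4.13 provides a homogeneous bound $\mu^{G}_{H}(m,n)\leq g(n/m)$. Given $h\in H$ with $|h|_{X}\leq n$, I would run a dyadic decomposition: write $h$ as a product of at most $g(2)$ elements of $Y_{n/2}$, decompose each factor as a product of at most $g(2)$ elements of $Y_{n/4}$, and iterate. After roughly $\log_{2} n$ steps the factors lie in $Y_{1}$, a finite subset of $H$ with bounded $Y$-length. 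This produces $|h|_{Y}\leq K\, g(2)^{\log_{2} n}=K\, n^{\log_{2} g(2)}$, so $\Delta^{G}_{H}$ is bounded above by a polynomial. The multiplicative and additive constants built into the $\preceq$ relation only change $K$ and the exponent.

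For statement $(1)$: for each $k\in\mathbb{N}$, I would take $G$ to be a torsion-free nilpotent group of class $k$ (for instance the $k$-th iterated Heisenberg group, or a free nilpotent group of class $k$) and let $H$ be an infinite cyclic subgroup sitting in the last nontrivial term $\Gamma_{k}G$ of the lower central series. Standard iterated-commutator estimates in Mal'cev coordinates give $\Delta^{G}_{H}(n)\sim n^{k}$, and applying the same estimates at each intermediate scale yields $\mu^{G}_{H}(m,n)\sim (n/m)^{k}$, a homogeneous function; Theorem 4.13 then provides connectedness.

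The main obstacle is statement $(2)$. A short calculation shows that for \emph{cyclic} $H$ one has $\mu^{G}_{H}(m,n)\sim \Delta^{G}_{H}(n)/\Delta^{G}_{H}(m)$, and the monotonicity of $\Delta^{G}_{H}$ forces the homogeneity of $\mu^{G}_{H}$ to coincide with polynomial growth of $\Delta^{G}_{H}$; so cyclic subgroups cannot separate $(b)$ from $(c)$, and $H$ must have rank at least two. My plan is to fix $\epsilon'>\epsilon$ and a very fast-growing sequence of scales $(n_{k})$, take $H\cong F_{2}$, and build $G$ as a succession of HNN or small-cancellation extensions imposing, at stage $k$, a relation that turns a specific $F_{2}$-word $w_{k}$ of $Y$-length $\sim n_{k}^{1+\epsilon'}$ into an $X$-word of length $\sim n_{k}$. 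Sparsity of $(n_{k})$ keeps $\Delta^{G}_{H}(n)\preceq n^{1+\epsilon}$ globally, while the pairs $(n_{k}/2, n_{k})$ make $\mu^{G}_{H}(n_{k}/2, n_{k})\to\infty$, so $\mu^{G}_{H}$ admits no homogeneous bound; an ultrafilter concentrated on $(n_{k})$ then yields a disconnected $Cone^{\omega}_{G}(H)$ via Theorem 4.13. The delicate point is verifying that the added relations shorten only the specified $w_{k}$ and do not create unintended shortcuts for other elements of $H$, which would destroy the upper bound on $\Delta^{G}_{H}$: a small-cancellation hypothesis on the added relators, or a careful Britton-normal-form analysis in a layered HNN construction, should provide the necessary control.
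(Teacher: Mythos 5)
Your treatment of $(a)\Rightarrow(b)\Rightarrow(c)$ is correct and is essentially the paper's argument: the dyadic decomposition giving $\Delta^{G}_{H}(n)\preceq n^{\log_{2}g(2)}$ is exactly the induction $\Delta^{G}_{H}(2^{n})=\mu^{G}_{H}(1,2^{n})\leq K^{n}$ used in the paper. For statement (1) you take a genuinely different route (free nilpotent groups of class $k$ rather than the paper's application of Olshanskii's embedding theorem with $\ell(z)=\lceil |z|^{1/k}\rceil$); this is viable, but note that $\Delta^{G}_{H}(n)\sim n^{k}$ alone does not control $\mu^{G}_{H}$ --- you also owe the verification that the cyclic subgroup is \emph{uniformly} distorted, i.e.\ that $\nabla^{G}_{H}(n)\sim n^{k}$ as well, which the paper carries out only for $k=2$ (the Heisenberg group, Example 3.3).

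The genuine gap is in statement (2). Your reduction rests on the assertion that, for cyclic $H$, homogeneity of $\mu^{G}_{H}(m,n)\cong\Delta^{G}_{H}(n)/\Delta^{G}_{H}(m)$ ``coincides with polynomial growth of $\Delta^{G}_{H}$.'' That is false: $\Delta(n)/\Delta(m)$ being bounded by a function of $n/m$ is a \emph{doubling} condition ($\Delta(2m)\leq K\Delta(m)$ for all $m$), which implies polynomial growth but is strictly stronger. A nondecreasing function can satisfy $\Delta(n)\preceq n^{1+\epsilon}$ globally while exhibiting arbitrarily large ratios $\Delta(Cm)/\Delta(m)$ along a sparse sequence of scales, and that is precisely what the counterexample must exploit. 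Consequently cyclic subgroups \emph{can} separate $(b)$ from $(c)$, and the paper's construction does exactly that: the plateau lemma (Lemma 4.18) produces a subadditive length function $\ell$ on $\mathbb{Z}$ lying above $\phi^{-1}$ with long plateaus $\ell(p_{k})=\cdots=\ell(kp_{k})$, and Olshanskii's theorem (Theorem 4.16) embeds $\mathbb{Z}$ in a finitely generated group $G$ with $|\psi(z)|_{X}$ equal to $\ell(z)$ up to a multiplicative constant; the plateaus force $\mu^{G}_{H}(\ell(p_{k})/C,\,C\ell(p_{k}))\geq k$ while $\Delta^{G}_{H}\preceq\phi$. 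Having wrongly discarded this route, you propose layered HNN or small-cancellation extensions of $F_{2}$ shortening chosen words $w_{k}$, but you leave exactly the hard point --- that the added relators create no unintended shortcuts, so that the upper bound $\Delta^{G}_{H}(n)\preceq n^{1+\epsilon}$ survives --- as an acknowledged unverified step. Olshanskii's theorem is precisely the tool that supplies that two-sided control, so as written statement (2) is not proved.
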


\par
Next, we show that the property of a subgroup being strongly quasi-convex, introduced independently by Tran and Genevois \cite{Morse, quasiconvex}, can be detected by a natural property of the embedding of $Cone^{\omega}_{G}(H)$ in $Cone^{\omega}(G)$.  

\begin{defn} A subgroup $H$ of a group $G$ with finite generating set $X$ is said to be \textit{quasi-convex} if there exists a number $M$ such that any geodesic in the Cayley graph $\Gamma(G,X)$ connecting two points in $H$ is contained in the $M$ neighborhood of $H$. $H$ is said to be $\textit{strongly quasi-convex}$ if for all real numbers $\lambda \geq 1,C \geq 0$ there exists a constant $N(\lambda,C)$ such that  any $(\lambda,C)$-quasi-geodesic in $\Gamma(G,X)$ connecting two points in $H$ is entirely contained in the $N$ neighborhood of $H$.
\end{defn}

In general, quasi-convexity is not independent of the choice of the finite generating set of $G$. For instance, in the group $\mathbb{Z} \times \mathbb{Z} = \langle a \rangle \times \langle b \rangle$, the subgroup $\langle ab \rangle$ is not quasi-convex with respect to the generating set $\langle a, b \rangle$, but is quasi-convex with respect to the generating set $\langle ab, a \rangle$. In the case where $G$ is hyperbolic, quasi-convexity is independent of the choice of the finite generating set. 

We have the following relationship between these properties of a subgroup $H$ of a finitely generated group $G$: $$ \textrm{strongly quasi-convex} \Rightarrow \textrm{quasi-convex} \Rightarrow \textrm{finitely generated and undistorted}.$$
None of the reverse implications hold. To see this again consider $G = \mathbb{Z} \times \mathbb{Z} = \langle a \rangle \times \langle b \rangle$. The subgroup $\langle ab \rangle$ is undistorted but not quasi-convex, and the subgroup $\langle a \rangle$ is quasi-convex but not strongly quasi-convex. However, in the case when $G$ is hyperbolic, all of these properties are in fact equivalent.

Strong quasi-convexity is a generalization of quasi-convexity that is preserved under quasi-isometry in general. Tran \cite{quasiconvex} characterized strongly quasi-convex subgroups based on a certain divergence function, and showed that they satisfy many properties of quasi-convex sugroups of hyperbolic groups. Specifically, any strongly quasi-convex subgroup is undistorted, has finite index in its commensurator, and the intersection of any two strongly quasi-convex subgroups is strongly quasi-convex. Examples of strongly quasi-convex subgroups include peripheral subgroups of relatively hyperbolic groups and hyperbolically embedded subgroups of finitely generated groups.

We show that the property of being strongly quasi-convex is equivalent to a natural property of the embedding of $Cone^{\omega}_{G}(H)$ in $Cone^{\omega}(G)$.

\begin{defn}
We say that a subspace $T$ of a metric space $S$ is \textit{strongly convex} if any simple path in $S$ starting and ending in $T$ is entirely contained in $T$.
\end{defn}

\begin{thm} (Theorem 5.12)
Let $H$ be a finitely generated subgroup of a finitely generated group $G$. $H$ is strongly quasi-convex in $G$ if and only if $Cone^{\omega}_{G}(H)$ is strongly convex in $Cone^{\omega}(G)$ for all non-principal ultrafilters $\omega$.
\end{thm}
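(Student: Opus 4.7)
The plan is to prove both directions by contradiction, translating between continuous paths in $Cone^{\omega}(G)$ and sequences of quasi-geodesic paths in the Cayley graph $\Gamma(G,X)$. A preliminary observation needed in both directions is that $Cone^{\omega}_{G}(H)$ is closed in $Cone^{\omega}(G)$, which I would establish by diagonal extraction of $H$-representatives against the ultrafilter $\omega$.

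For the forward direction, suppose $H$ is strongly quasi-convex, and suppose for contradiction that $\gamma\colon [0,1]\to Cone^{\omega}(G)$ is a simple path with $\gamma(0),\gamma(1)\in Cone^{\omega}_{G}(H)$ and some $\gamma(t_{0})\notin Cone^{\omega}_{G}(H)$. By closedness $M:=d(\gamma(t_{0}),Cone^{\omega}_{G}(H))>0$, and restricting to a sub-arc I may assume the interior of $\gamma$ avoids $Cone^{\omega}_{G}(H)$, with a maximum-distance point $r$ at which $d(r,Cone^{\omega}_{G}(H))=M$. Now lift an approximation of $\gamma$ back to $\Gamma(G,X)$: partition $[0,1]$ as $0=t_{0}<\cdots<t_{m}=1$ with consecutive cone-distances uniformly small, represent $\gamma(t_{i})=[g_{n,i}]_{\omega}$ with $g_{n,0},g_{n,m}\in H$, and concatenate Cayley-graph geodesics between successive $g_{n,i}$ to form paths $\sigma_{n}$ in $\Gamma(G,X)$. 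If the partition is arranged so that $\sigma_{n}$ is a uniform $(\lambda,C)$-quasi-geodesic with constants independent of $n$, strong quasi-convexity confines $\sigma_{n}$ to $N_{N(\lambda,C)}(H)$; however, $\sigma_{n}$ passes within an $s_{n}$-scaled distance $o(1)$ of any representative of $r$, which lies at $\Gamma(G,X)$-distance $\sim Ms_{n}$ from $H$---a contradiction for large $n$.

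For the backward direction, argue the contrapositive. If $H$ is not strongly quasi-convex, then for fixed $\lambda,C$ and each $n$ there is a $(\lambda,C)$-quasi-geodesic $\alpha_{n}$ from $x_{n}\in H$ to $y_{n}\in H$ containing a point $p_{n}$ with $d_{G}(p_{n},H)\geq n$. Take $p_{n}$ to be a point of maximum distance from $H$ on $\alpha_{n}$, set $d_{n}:=d_{G}(p_{n},H)$, and perform a standard ``bad region'' truncation: cut $\alpha_{n}$ at its last approach to $H$ before $p_{n}$ and its first return after $p_{n}$, replacing the cut endpoints by their $H$-projections to produce a modified quasi-geodesic $\alpha_{n}'$ whose $H$-endpoints $x'_{n},y'_{n}$ are at $G$-distance $O(d_{n})$ from $p_{n}$ while the quasi-geodesic constants remain uniform. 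Form $Cone^{\omega}(G)$ with scale $s_{n}=d_{n}$ and basepoint $(x'_{n})$; then $[x'_{n}],[y'_{n}]\in Cone^{\omega}_{G}(H)$ are valid cone points, and $d([p_{n}],Cone^{\omega}_{G}(H))=\lim_{\omega} d_{n}/s_{n}=1$. The ultralimit of $\alpha_{n}'$ is a Lipschitz path in $Cone^{\omega}(G)$ from $[x'_{n}]$ through $[p_{n}]$ to $[y'_{n}]$; any simple sub-path extracted from its image must still traverse a point outside $Cone^{\omega}_{G}(H)$---since $\alpha_{n}'$ stays outside a definite $H$-neighborhood on the bad region---contradicting strong convexity.

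The principal obstacle is the forward direction's requirement that the polygonal approximation $\sigma_{n}$ be a uniform quasi-geodesic. Simple paths in $Cone^{\omega}(G)$ need not be rectifiable, so direct partition refinement can produce polygonal chains whose total length is unbounded relative to the chord $d(\gamma(0),\gamma(1))$, destroying the quasi-geodesic property. I expect to resolve this by working locally near $r$---excising the portion of $\gamma$ close to $Cone^{\omega}_{G}(H)$ so that the relevant sub-arc has diameter comparable to $M$, with $H$-endpoints chosen as $H$-projections of the entry and exit points of $\gamma$ across the $M/2$-level set. The backward direction's technical core---the truncation preserving uniform quasi-geodesic constants---is more routine but still requires careful bookkeeping of how the constants propagate under replacement of endpoints.
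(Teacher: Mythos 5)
Your overall architecture matches the paper's (which splits the statement into Theorem 5.3 and Theorem 5.10), and your backward direction is close in spirit to the paper's Theorem 5.3; but the forward direction has a genuine gap, and it is exactly the one you flag as the ``principal obstacle.'' Your proposed fix --- restricting to a sub-arc near the maximum-distance point $r$ so that its diameter is comparable to $M$ --- does not address the problem. Non-rectifiability is local: a sub-arc of a simple path in $Cone^{\omega}(G)$ of diameter $M$ can still fail to be rectifiable, so its polygonal approximations by concatenated Cayley-graph geodesics can still have total length unbounded relative to the chord, and the lifted paths $\sigma_{n}$ are then not $(\lambda,C)$-quasi-geodesics for any uniform $(\lambda,C)$. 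Since strong quasi-convexity only controls quasi-geodesics with fixed constants, the contradiction never materializes. The paper closes this gap with results of Drutu--Mozes--Sapir (Lemmas 5.6, 5.8 and 5.9 in the text): a simple path between points in the same component of the complement of the closed set $Cone^{\omega}_{G}(H)$ can be replaced by a simple concatenation of limit geodesics; that can be approximated within any $\delta$ in Hausdorff distance by a $C$-bi-Lipschitz concatenation of limit geodesics; and such a path is the $\omega$-limit of uniformly $C'$-bi-Lipschitz paths in $G$, to which the Morse property applies. Some substitute for this machinery is indispensable; it cannot be produced by partition refinement, however localized.

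In the backward direction your truncation recipe also needs repair. Cutting $\alpha_{n}$ at its ``last approach to $H$ before $p_{n}$'' gives no control on the distance from the cut point to $p_{n}$: a quasi-geodesic can linger at bounded distance from $H$ over a parameter interval much longer than $d_{n}$ before climbing to height $d_{n}$, so your claim that $x'_{n},y'_{n}$ lie at distance $O(d_{n})$ from $p_{n}$ does not follow, and the ultralimit at scale $d_{n}$ need not even be a path of finite length. The paper instead truncates at parameter distance $3\lambda d_{i}$ on either side of the farthest point, which forces the cut points to within $O(d_{i})$ of $p_{i}$, and then uses its Lemmas 5.4 and 5.5 to splice the three limit pieces into a genuinely simple path in the cone passing through a point at distance $1$ from $Cone^{\omega}_{G}(H)$. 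Your remark that ``any simple sub-path extracted from its image must still traverse a point outside $Cone^{\omega}_{G}(H)$'' is precisely the step those lemmas justify; as stated it is an assertion, not an argument.
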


This characterization gives useful information about the structure of the asymptotic cones of groups with strongly quasi-convex subgroups. For instance, we obtain the following result.

\begin{thm} (Theorem 5.13) If $G$ is a finitely generated group containing an infinite, infinite index strongly quasi-convex subgroup $H$, then all asymptotic cones of $G$ contain a cut point.
\end{thm}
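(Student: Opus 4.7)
The plan is to use Theorem 5.12 to reduce the claim to a topological separation statement about the embedding $T := Cone^{\omega}_G(H)$ in $S := Cone^{\omega}(G)$, and then construct an explicit cut point via a ``first-entry'' argument. Fix a non-principal ultrafilter $\omega$ with scaling sequence $(s_n)$, so that by Theorem 5.12 the subspace $T$ is strongly convex in $S$. I will produce a point $p_z \in T$ whose removal disconnects $S$.

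Two preliminary facts extract information from the hypotheses on $H$. Strongly quasi-convex subgroups are finitely generated and undistorted, so $H$ being infinite yields a sequence $(h_n) \in H^{\omega}$ with $|h_n|_X \asymp s_n$, whose class $[(h_n)]$ lies in $T$ at positive distance from the basepoint; hence $|T| \ge 2$. The infinite-index hypothesis implies that the set $\{d_X(g, H) : g \in G\}$ is unbounded, for otherwise $G$ would be covered by finitely many right $H$-cosets. For each $n$, I choose an element $g \in G$ with $d_X(g, H) > s_n$ and walk along a geodesic from $e$ to $g$, halting at the first step at which the distance to $H$ reaches $s_n$; this produces $g_n \in G$ with $|g_n|_X$ and $d_X(g_n, H)$ both in $[s_n, s_n + 1]$. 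Setting $z := [(g_n)] \in S$, one has $d_S(z, T) \ge 1$, so $z \notin T$.

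The main step, and the place where the topology does the real work, is to show that for every simple path $\gamma\colon [0,1] \to S$ with $\gamma(0) = z$ and $\gamma(1) \in T$, the entry point $p_\gamma := \gamma(t_\gamma)$, where $t_\gamma := \inf\{t : \gamma(t) \in T\}$, depends only on $z$. Here I use that $T$ is closed in $S$ (a standard consequence of the ultrafilter construction) to guarantee $p_\gamma \in T$. Suppose two such paths $\gamma_1, \gamma_2$ gave distinct entry points $p_1 \ne p_2$. Concatenating the reverse of $\gamma_1|_{[0, t_{\gamma_1}]}$ with $\gamma_2|_{[0, t_{\gamma_2}]}$ produces a continuous path from $p_1$ to $p_2$ inside the compact set $K := \gamma_1([0, t_{\gamma_1}]) \cup \gamma_2([0, t_{\gamma_2}])$; by the standard result that any path in a Hausdorff space contains a simple arc between its endpoints, there is a simple arc $\alpha \subseteq K$ from $p_1$ to $p_2$. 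By definition of the $t_{\gamma_i}$ one has $K \cap T = \{p_1, p_2\}$, so the interior of $\alpha$ lies in $S \setminus T$, contradicting the strong convexity of $T$. This forces $p_1 = p_2 =: p_z$.

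To finish, pick any $u \in T \setminus \{p_z\}$, possible since $|T| \ge 2$. Time-reversing a simple path from $u$ to $z$ gives a simple path from $z$ into $T$, whose first entry into $T$ must be $p_z$ by the uniqueness just established; hence every simple path from $u$ to $z$ passes through $p_z$. Since $S$ is a complete geodesic metric space, any path between two points can be replaced by a simple one with the same endpoints, so $u$ and $z$ lie in distinct components of $S \setminus \{p_z\}$, and $p_z$ is a cut point of $Cone^{\omega}(G)$. The main obstacle is the uniqueness of the entry point $p_z$; everything else is bookkeeping of the infinite and infinite-index hypotheses at the level of the cone.
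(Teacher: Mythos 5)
Your proof follows essentially the same route as the paper: reduce via Theorem 5.12 to the statement that a proper, closed, strongly convex subspace $T$ with at least two points forces a cut point, and then show that the first entry point of a simple path from an exterior point $z$ into $T$ is that cut point, deriving a contradiction with strong convexity by splicing two such paths into a simple arc between two points of $T$ whose interior avoids $T$. Your ``uniqueness of the entry point'' packaging is a slightly cleaner organization of the same contradiction the paper runs directly, and your explicit verification that $T$ is proper and has two points fills in what the paper dismisses as ``easy to see.''

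One step as written is wrong, though easily repaired: when you produce $g_n$ by walking along a geodesic from $e$ to $g$ and stopping the first time the distance to $H$ reaches $s_n$, you do get $d_X(g_n,H)\in[s_n,s_n+1]$, but you have no control on $|g_n|_X$ --- the geodesic may hug $H$ for an arbitrarily long time before drifting away, so $|g_n|_X$ need not lie in $[s_n,s_n+1]$, and then $(g_n)$ need not represent a point of the cone with observation point $(e)$. The standard fix is to left-translate: pick $h\in H$ realizing $d_X(g_n,H)$ and replace $g_n$ by $h^{-1}g_n$, which has $|h^{-1}g_n|_X=d_X(g_n,H)\in[s_n,s_n+1]$ and the same distance to $H$. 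With that correction the argument goes through.
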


A precursor to Theorems 1.10 and 1.11 can be found in \cite{Behrstock}, where Behrstock showed that any asymptotic cone of a mapping class group contains an isometrically embedded copy of an $\mathbb{R}$ tree, and that this $\mathbb{R}$ tree is strongly convex in the asymptotic cone. This is then used to deduce that any asymptotic cone of a mapping class group contains a cut point. I would like to thank Jason Behrstock for pointing out this connection.

Combining Theorem 1.11 with a result of Drutu and Sapir \cite{trees} gives the following result.

\begin{cor} (Corollary 5.15)
If $G$ is a finitely-generated group containing an infinite, infinite index strongly quasi-convex subgroup, then $G$ does not satisfy a law.
\end{cor}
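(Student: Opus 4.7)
The plan is to invoke Theorem 5.13 (the existence of cut points) and combine it with the theorem of Drutu--Sapir from \cite{trees} which asserts that if a finitely generated group satisfies a (non-trivial) law, then no asymptotic cone of the group contains a cut point. Since both ingredients are already available, the argument reduces essentially to a contrapositive.

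Concretely, I would proceed as follows. First, observe that the hypothesis of the corollary --- that $G$ is finitely generated and contains an infinite, infinite index strongly quasi-convex subgroup $H$ --- is exactly the hypothesis of Theorem 5.13. Applying that theorem, I conclude that every asymptotic cone $Cone^{\omega}(G)$ contains a cut point. Second, I would recall the relevant statement from \cite{trees}: if a finitely generated group $G$ satisfies a law, then no asymptotic cone of $G$ contains a cut point. Taking the contrapositive and feeding in the conclusion of the previous step yields that $G$ does not satisfy a law.

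There is no substantive obstacle: the two cited results were designed to interlock here. The only point requiring a brief word of care is to make sure the terminology matches --- namely, that ``satisfies a law'' in the sense used here (existence of a non-trivial $w \in F_n$ with $w(g_1,\dots,g_n) = 1$ identically on $G$) is the same notion used in \cite{trees}, which it is. Consequently the proof of Corollary 5.15 will be a one-paragraph citation-chaining argument rather than a genuinely new piece of work.
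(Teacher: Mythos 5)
Your proposal is correct and matches the paper's proof exactly: the paper likewise chains Theorem 5.13 (every asymptotic cone of such a $G$ contains a cut point, obtained by applying the metric-space cut-point theorem to the proper, closed, strongly convex subspace $Cone^{\omega}_{G}(H)$) with the contrapositive of the Drutu--Sapir theorem. One small point of care: the Drutu--Sapir result as quoted in the paper carries the hypothesis that $G$ is non-virtually cyclic, which your paraphrase drops, so you should add the (easy) observation that a group containing an infinite subgroup of infinite index cannot be virtually cyclic, making that hypothesis automatic.
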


This result can be applied to show for instance that solvable groups and groups satisfying the law $x^{n} = 1$ for some $n \in \mathbb{N}$ cannot have infinite, infinite index strongly quasi-convex subgroups.

The paper is organized as follows. Section 2 covers some necessary background on asymptotic cones and establishes our notation. Section 3 establishes some basic properties of the generalized distortion function and formulates a relationship between the generalized distortion function and the distortion function. Section 4 contains the proof of Theorems 1.6 and 1.7. Finally, section 5 contains the proof of Theorems 1.10 and 1.11.

\section{Background}
In this section, we provide some background and fix our notation for asymptotic cones.

Recall that given an ultrafilter $\omega$ and any bounded sequence of real numbers, $(r_{i})$, $\lim^{\omega} (r_{i})$ exists and is unique.

Now let $(S,d)$ be a metric space, and let $c_{i}$ be an unbounded, strictly increasing sequence of positive real numbers. Denote by $d_{i}$ the metric on $S$ defined by $d_{i}(x,y) = d(x,y)/c_{i}.$ We call the sequence $(c_{i})$ the $\textit{scaling sequence}$.

\begin{defn}
    Given a metric space $(S,d)$, a scaling sequence $(c_{i})$, and an infinite sequence of points $z = (s_{i})$ in $S$, denote by $S^{\mathbb{N}}_{z}$ the set of infinite sequences $(t_{i})$ in $S$ such that $d_{i}(s_{i},t_{i})$ is bounded. The sequence $(s_{i})$ is called the $\textit{observation point}$. 
\end{defn}

\begin{defn} 
    Given $(x_{i}),(y_{i}) \in S^{\mathbb{N}}_{z}$, let $d^{*}((x_{i}),(y_{i})) = \lim^{\omega}d_{i}(x_{i},y_{i})$.
\end{defn}

Note that this is a bounded sequence so the limit exists. However, in general $d^{*}$ will not be a metric, as there can be different sequences $(x_{i}),(y_{i})$ such that $d^{*}((x_{i}),(y_{i})) = 0$.

\begin{defn}
    We will denote by $Cone^{\omega}_{z}((d_{i}),S)$ the metric space that results from quotienting the pseudo-metric $d^{*}$ by the equivalence relation $(x_{i}) \sim (y_{i})$ if $d^{*}((x_{i}),(y_{i})) = 0$. We will denote the resultant metric by $d^{\omega}_{S}$. When the choice of the base point or the scaling sequence is clear, we will simply write $Cone^{\omega}(S)$. We will denote the equivalence class of $(x_{i})$ by $(x_{i})^{\omega}$, so $d^{\omega}_{S}((x_{i})^{\omega},(y_{i})^{\omega}) = d^{*}((x_{i}),(y_{i})).$
\end{defn}

\begin{defn}
A map $f$ between two metric spaces $(S,d_{S})$ and $(T,d_{T})$ is called a                $(\lambda,C)\textit{-quasi-isometric embedding}$ if for all $s,t \in S$ 
$$\frac{d_{S}(s,t)}{\lambda}-C \leq d_{T}(f(s),f(t)) \leq \lambda d_{S}(s,t)+C.$$ $f$ is called $\epsilon\textit{-quasi-surjective}$ if for all $t \in T$, there exists an $s \in S$ such that $d_{T}(f(s),t) \leq \epsilon$. A map $f$ is called a $(\lambda,C,\epsilon)\textit{-quasi-isometry}$ if $f$ is a $(\lambda,C)$-quasi-isometric embedding, and is $\epsilon$-quasi-surjective. When we don't care about the quasi-isometry constants, we will simply call $f$ a quasi-isometry and say that $S$ and $T$ are quasi-isometric.
\end{defn}

\begin{defn}
Let $S$ be a metric space. A path $p\colon [0,\ell] \rightarrow S$ is called a $(\lambda,C)\textit{-quasi-geodesic}$ if $p$ is a $(\lambda,C)$-quasi-isometric embedding.
\end{defn}

\begin{defn} 
Given a pointed metric space $(S,x)$ and $(\lambda,C)$-quasi-geodesic paths \newline $p_{i} \colon [0,\ell_{i}] \rightarrow S$ such that the sequence $\ell_{i}/c_{i}$ is bounded and $(p_{i}(0)) \in S_{z}^{\mathbb{N}}$, let $L = \textrm{lim}^{\omega} \ell_{i}/c_{i}$. If $L \neq 0$, define the $\omega\textit{-limit}$ of the paths $p_{i}$, denoted $$p = \textrm{lim}^{\omega}(p_{i}) \colon [0,L] \rightarrow Cone^{\omega}(S),$$ by the following formula: $p(x) = \left(p_{i}\left(x\frac{\ell_{i}}{L}\right)\right)^{\omega}$. If $L = 0$, define $p = \textrm{lim}^{\omega}(p_{i})\colon \{0\} \rightarrow Cone^{\omega}(S)$ by the formula $p(0) = (p_{i}(0))^{\omega}$.
\end{defn}

\begin{defn} A geodesic in $Cone^{\omega}(S)$ is called a $\textit{limit geodesic}$ if it is an $\omega$-limit of geodesic paths.
\end{defn}

Note that the limit of geodesics is a geodesic in the asymptotic cone. Thus, if $S$ is a geodesic metric space, then so is $Cone^{\omega}(S)$.

A finitely generated group $G$ can be considered as a metric space using the word metric arising from any finite generating set $X$. Given an ultrafilter $\omega$, we will denote the asymptotic cone of $G$ with respect to $\omega$ by $Cone^{\omega}(G)$ where we assume all scaling sequences are $c_{i}=i$ unless otherwise specified, and the observation point will always be $(e)^{\omega}$. Note that $G$ is $(0,0,\frac{1}{2})$ quasi-isometric to its Cayley graph $\Gamma(G,X)$, and so its asymptotic cone is isometric to the asymptotic cone of $\Gamma(G,X)$. This is a geodesic space, and so we have that $Cone^{\omega}(G)$ is a geodesic space. 

The asymptotic cone of $G$ depends on the choice of a finite generating set $X$, an ultrafilter $\omega$,  and the choice of a scaling sequence $(d_{i})$. Note that changing the generating set of a group gives a quasi-isometric Cayley graph, and so will give a quasi-isometric asymptotic cone. In general, however, the other choices can matter, and a group can have many different asymptotic cones. For instance, Thomas and Velickovic exhibited a group such that one of its asymptotic cones is an $\mathbb{R}$-tree, and another is not simply connected \cite{ultrafilter}. These two choices turn out to be closely related. Specifically, given any scaling sequence $(c_{i})$ such that the sizes of the sets $S_{r} = \{i | c_{i} \in [r,r+1)\}$ are bounded, and any ultrafilter $\omega$, there exists an ultrafilter $\omega'$ such that $Cone^{\omega}((c_{i}),G) = Cone^{\omega'}((i),G)$ \cite{scaling}. This justifies our choice to take all scaling sequences as $c_{i}=i$ unless otherwise specified.

\begin{defn}
We say that a metric space $S$ is transitive if for any two points $s,t \in S$ there exists an isometry $\phi \colon S \rightarrow S$ such that $\phi(s) = t$.
\end{defn}

Recall that for any group $G$, $Cone^{\omega}(G)$ is a transitive space, and that any asymptotic cone is complete.

\section{The generalized distortion function}

We begin by defining a variant of distortion that will help us calculate generalized distortion in a variety of groups.

\begin{defn} 
Let $H$ be a subgroup of a group $G$ and let $Y,X$ be finite generating sets of $H$ and $G$ respectively. Define the $\textit{lower distortion function}$ of $H$ in $G$, denoted $\nabla^{G,X}_{H,Y}(n)$, by the formula $$\nabla^{G,X}_{H,Y}(n) = \min\{|h|_{Y} \mid |h|_{X} > n, h \in H\}.$$
\end{defn}

We consider lower distortion up to the same equivalence as distortion, and denote by $\nabla^{G}_{H}$ the function $\nabla^{G,X}_{H,Y}$ for some choices of the finite generating sets $X,Y$.
\begin{example}
For $p \in \mathbb{N}, p \geq 2$, let $G = BS(1,p) = \langle a,b|b^{-1}ab=a^{p}\rangle$, and let $H = \langle a\rangle$. Note that $a^{p^{n}} = b^{-n}ab^{n}$, and so $\Delta^{G}_{H}(n) \succeq p^{n}$. In fact, $\Delta^{G}_{H} \sim p^{n}$\cite{Gromov2}. Next, note that if $k < p^{n}$, then we can write $k = \sum_{i=0}^{n-1}c_{i}p^{i},$ with $0 \leq c_{i} < p.$ This in turn means that we can write $a^{k}=\prod_{i=0}^{n-1}b^{-i}a^{c_{i}}b^{i} = b^{-1}(\prod_{i=0}^{n-1}a^{c_{i}}b^{-1})b^{n-1}.$ This implies that $|a^{k}|_{X} \leq n + n(p) = n(p+1).$ Thus, $\nabla^{G}_{H}(n) \succeq p^{n}$.
\end{example}

\begin{example}
Let $G$ be the discrete Heisenberg group, i.e. the group of all upper triangular integer matrices with ones along the diagonal, and let $H$ be the center of this group, i.e. the subgroup of all matrices of the form $\begin{pmatrix}
1 & 0 & c \\
0 & 1 & 0 \\
0 & 0 & 1
\end{pmatrix}$ with $c \in \mathbb{Z}.$
 Let $X$ be the generating set for the group $G$ given by $G = \langle x,y,z\rangle $ where $x = \begin{pmatrix}
1 & 1 & 0 \\
0 & 1 & 0 \\
0 & 0 & 1
\end{pmatrix}$, $y = \begin{pmatrix}1 & 0 & 0 \\
0 & 1 & 1 \\
0 & 0 & 1\end{pmatrix}$, and $z = \begin{pmatrix}1 & 0 & 1 \\
0 & 1 & 0 \\
0 & 0 & 1\end{pmatrix}$, and let $Y = \{ z\}$, a generating set for $H$.
Note that $x^{n}y^{n}x^{-n}y^{-n} = z^{n^{2}}.$ Now let $m$ be a natural number such that $(n-1)^{2} < m < n^{2}.$ We know that $|z^{n^{2}}|_{X} \leq 4n$. Thus, $$|z^{m}|_{X} \leq 4n + (n^{2}-(n-1)^{2}) = 4n + 2n -1 \leq 6n.$$ Thus, if $m \leq n^{2}$, then $|z^{m}|_{X} \leq 6n$, and so $\nabla^{G}_{H}(n) \succeq n^{2}$.

Now we will show that if $|h|_{X} \leq n$, then $|h|_{Y} \leq n^{2}$. Let $f \colon G \rightarrow \mathbb{N}$ be the function given by $f\begin{pmatrix}1 & a & b \\
0 & 1 & c \\
0 & 0 & 1\end{pmatrix} = |a|$, and let $k \colon G \rightarrow \mathbb{N}$ be the function given by $k\begin{pmatrix}1 & a & b \\
0 & 1 & c \\
0 & 0 & 1\end{pmatrix}= |b|$. We have that $$f(gx) \leq f(g) + 1, \, f(gy) = f(g),\,  f(gz)=f(g),$$ and thus if $|g|_{X} \leq n$, then $f(g) \leq n$. Similarly, $$k(gx) = k(g), \, k(gy) \leq f(g)+k(g), \, k(gz) \leq k(g) + 1.$$ Thus if $|g|_{X} \leq n$, then $k(g) \leq  n^{2}.$ If $h \in H$, then $|h|_{Y} = k(h)$, and so if $|h|_{X} \leq n$, then $|h|_{Y} \leq n^{2}$. Thus, $\Delta^{G}_{H}(n) \preceq n^{2}$.
\end{example}

\begin{example}
Let $G = \langle a,b,c | [a,b] =1, [a,c] = 1, c^{-1}bc = b^{2}\rangle \cong \mathbb{Z} \times BS(1,2)$, and let $H = \langle a,b \rangle \cong \mathbb{Z} \times \mathbb{Z}$. Let $X = \{a,b,c\}$. Note that $|b^{2^{n}}|_{X} \leq 2n+1$, so $\Delta^{G}_{H}(n) \succeq 2^{n}$, but $|a^{n}|_{X} =n$, and so $\nabla^{G}_{H}(n) \preceq n$. Thus, we have that $\Delta^{G}_{H} \not  \sim \nabla^{G}_{H}$.
\end{example}
Note that if $f_{1}, \, f_{2}, \, g_{1}$ and $g_{2}$ are strictly increasing functions such that $f_{1}(n) \sim f_{2}(n)$ and $g_{1}(n) \sim g_{2}(n)$ then $f_{1}(n)/g_{1}(m) \cong f_{2}(n)/g_{2}(m)$. Thus, we can state the following proposition. 

\begin{proposition}
For a finitely generated subgroup $H$ of a finitely generated group group $G$, the following inequalities hold 

\begin{equation} 
\tag{1} \label{eqn:b} \frac{\Delta^{G}_{H}(n)}{\Delta_{H}^{G}(m)} \preceq \mu^{G}_{H}(m,n) \preceq \frac{\Delta^{G}_{H}(n)}{\nabla^{G}_{H}(m)}.
\end{equation} 

\end{proposition}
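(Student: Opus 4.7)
The plan is to prove the two inequalities in the statement separately, by short combinatorial arguments comparing word lengths in $Y$, $X$, and $Y_m$.

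For the left inequality $\Delta^G_H(n)/\Delta^G_H(m) \preceq \mu^G_H(m,n)$, I pick an element $h \in H$ with $|h|_X \le n$ realizing $|h|_Y = \Delta^G_H(n)$, and use the definition of $\mu^G_H$ to write $h = h_1 h_2 \cdots h_k$ with each $h_i \in Y_m$ and $k \le \mu^G_H(m,n)$. Because each $h_i$ satisfies $|h_i|_X \le m$, we get $|h_i|_Y \le \Delta^G_H(m)$, and concatenating $Y$-words for the $h_i$'s shows $\Delta^G_H(n) = |h|_Y \le k\,\Delta^G_H(m) \le \mu^G_H(m,n)\cdot \Delta^G_H(m)$, which rearranges to the desired bound.

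For the right inequality $\mu^G_H(m,n) \preceq \Delta^G_H(n)/\nabla^G_H(m)$, the idea is to chop short $Y$-words into $Y_m$-blocks. Given $h \in H$ with $|h|_X \le n$, write $h = y_1 y_2 \cdots y_\ell$ with $\ell = |h|_Y \le \Delta^G_H(n)$ and partition this word into consecutive blocks $b_1,\ldots,b_k$ each of $Y$-length at most $r := \nabla^G_H(m) - 1$. Each block then satisfies $|b_j|_Y \le r < \nabla^G_H(m)$, so by the contrapositive of the definition of $\nabla^G_H$ we must have $|b_j|_X \le m$, i.e.\ $b_j \in Y_m$. This gives $|h|_{Y_m} \le k \le \lceil \ell / r\rceil \le 2\Delta^G_H(n)/\nabla^G_H(m) + 1$, at least whenever $\nabla^G_H(m) \ge 2$.

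The only real obstacle is handling the boundary values of $m$ for which $\nabla^G_H(m) \le 1$. Because $X$ contains a generating set for $H$, one has $Y \subseteq Y_1 \subseteq Y_m$ for every $m \ge 1$, which gives the crude uniform bound $\mu^G_H(m,n) \le \mu^G_H(1,n) \le \Delta^G_H(n)$. Moreover, only finitely many values of $m$ can satisfy $\nabla^G_H(m) = 1$, namely those below $M := \max_{y \in Y \cup Y^{-1}} |y|_X$; for these small values the discrepancy between $\Delta^G_H(n)$ and $\Delta^G_H(n)/\nabla^G_H(m)$ is bounded by a universal constant, which can be absorbed into the $C$ appearing in the definition of $\preceq$. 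Combined with the block-chopping estimate for $m \ge M$, this yields a single constant $C$ that works for all $m,n$, completing the proof.
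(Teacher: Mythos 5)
Your proof is correct and follows essentially the same route as the paper's: the lower bound comes from the extremal element realizing $\Delta^{G}_{H}(n)$ together with the observation that each $Y_{m}$-factor has $Y$-length at most $\Delta^{G}_{H}(m)$, and the upper bound comes from chopping a $Y$-word of length at most $\Delta^{G}_{H}(n)$ into blocks of $Y$-length less than $\nabla^{G}_{H}(m)$, which forces each block into $Y_{m}$. Your extra care with the degenerate case $\nabla^{G}_{H}(m)\leq 1$ (where the paper implicitly divides by $\nabla^{G}_{H}(m)-1$) is a small refinement, not a different argument.
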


\begin{proof}
First, choose a finite generating set $X$ for $G$ containing a generating set $Y$ for $H$. Fix $n \in \mathbb{N}$ and let $h$ be an element of H such that $|h|_{X} \leq n$, and $|h|_{Y} = \Delta^{G,X}_{H,Y}(n)$. By definition, if $k \in Y_{m}$ then $|k|_{X} \leq m$, and so $|k|_{Y} \leq \Delta_{H,Y}^{G,X}(m)$. Thus, $|h|_{Y_{m}} \geq \left \lceil \Delta^{G,X}_{H,Y}(n)/ \Delta^{G,X}_{H,Y}(m)\right \rceil,$ and we obtain the first inequality in $\eqref{eqn:b}.$ For the next inequality, note that if $|h|_{X} \leq n$, then $|h|_{Y} \leq \Delta^{G,X}_{H,Y}(n)$. Thus, we can write $h$ as a product of at most $\left \lceil \Delta^{G,X}_{H,Y}(n)/ (\nabla^{G,X}_{H,Y}(m)-1)\right \rceil$ elements of length less than or equal to $\nabla^{G,X}_{H,Y}(m)-1$ with respect to $Y$. Note that if $h$ is an element of $H$ such that $|h|_{Y} < \nabla^{G,X}_{H,Y}(m)$, then by the definition of $\nabla^{G,X}_{H,Y}$, $|h|_{X} \leq m$, and $h \in Y_{m}$.  This gives the second inequality in $\eqref{eqn:b}$.
\end{proof}

\begin{defn}
We call a subgroup $H$ of a group $G$ $\textit{uniformly distorted}$ if $\Delta^{G}_{H} \sim \nabla^{G}_{H}$.
\end{defn}

Combining the previous observations gives the following corollary.
\begin{cor}
If $H$ is a uniformly distorted finite subgroup of a finite group $G$, then $\mu^{G}_{H}(m,n) \cong \frac{\Delta^{G}_{H}(n)}{\Delta^{G}_{H}(m)}\cong\frac{\Delta^{G}_{H}(n)}{\nabla^{G}_{H}(m)}.$
\end{cor}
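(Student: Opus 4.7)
The plan is to combine Proposition 3.5 with the uniform distortion hypothesis by way of the observation recorded immediately before Proposition 3.5. The whole argument is a short chain of equivalences, and I do not expect any serious obstacle.

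First I would quote the two-sided estimate of Proposition 3.5:
\[
\frac{\Delta^{G}_{H}(n)}{\Delta^{G}_{H}(m)} \preceq \mu^{G}_{H}(m,n) \preceq \frac{\Delta^{G}_{H}(n)}{\nabla^{G}_{H}(m)}.
\]
Then, using that $H$ is uniformly distorted (Definition 3.6), i.e.\ $\Delta^{G}_{H} \sim \nabla^{G}_{H}$, I would apply the observation preceding Proposition 3.5 with $f_{1}=f_{2}=\Delta^{G}_{H}$, $g_{1}=\Delta^{G}_{H}$, $g_{2}=\nabla^{G}_{H}$ to conclude
\[
\frac{\Delta^{G}_{H}(n)}{\Delta^{G}_{H}(m)} \cong \frac{\Delta^{G}_{H}(n)}{\nabla^{G}_{H}(m)}.
\]
Finally, since $\mu^{G}_{H}(m,n)$ is sandwiched in the $\preceq$-order between two $\cong$-equivalent functions of $(m,n)$, transitivity of $\preceq$ forces $\mu^{G}_{H}(m,n)$ to be $\cong$-equivalent to both, which is exactly the claimed chain.

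The one minor technical nuisance is that the cited observation is phrased for strictly increasing functions, whereas $\Delta^{G}_{H}$ and $\nabla^{G}_{H}$ are a priori only non-decreasing. This is not a real obstacle: I would either replace each function by a strictly increasing representative in its $\sim$-equivalence class (for instance $\Delta^{G}_{H}(n)+n$), or simply note that the proof of that observation goes through unchanged for non-decreasing functions. After this, verifying the corollary amounts to a bookkeeping check of the constants appearing in Definition 1.4, and there is no step that requires any substantive new idea.
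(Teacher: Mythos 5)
Your argument is exactly the one the paper intends: it justifies the corollary only by the phrase ``combining the previous observations,'' meaning precisely Proposition 3.5, Definition 3.6, and the remark preceding Proposition 3.5, assembled by the sandwich/transitivity argument you give. Your proposal is correct, and your aside about strictly increasing versus non-decreasing functions is a reasonable (and easily repaired) technicality that the paper glosses over.
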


\begin{example}
Example 3.2 showed that if $G = BS(1,p) = \langle a,b \mid b^{-1}ab = a^{p}\rangle$ and $H = \langle a \rangle$, then $H$ is uniformly distorted in $G$, so we can apply Corollary 3.7 to get that $\mu^{G}_{H}(m,n) \cong p^{n-m}.$
\end{example}
\begin{example}
Example 3.3 showed that if $G$ is the discrete Heisenberg group, and $H$ is the center of $G$ then $H$ is uniformly distorted in $G$ and we have from Corollary 3.7 that $\mu^{G}_{H}(m,n) \cong \left(n/m\right)^{2}.$
\end{example}

We conclude with an example demonstrating that for a group $G$ with finitely generating set $X$ containning a generating set for a subgroup $H$, $\mu^{G,X}_{H}(n-1,n)$ can be very large.

\begin{example}
Let $H$ be a finitely generated subgroup of a finitely generated group $G$ such that the membership problem is undecidable, and let $X$ be a finite generating set for $G$ containing a generating set of $H$. The existence of such subgroups was demonstrated independently by Mihailova and Rips \cite{cancellation} \cite{russian}. Gromov \cite{Gromov2} showed that the distortion function of $H$ in $G$ is bounded by a computable function if and only if the membership problem is solvable. Note that $\Delta_{H,Y}^{G,X}(n) = \mu_{H}^{G,X}(1,n) \leq \mu_{H}^{G,X}(1,2) \mu_{H}^{G,X}(2,3) \, \dots \, \mu_{H}^{G,X}(n-1,n)$. Thus, if $\mu_{H}^{G,X}(n-1,n)$ is bounded by a computable function, then so is $\Delta^{G,X}_{H,Y}(n)$, a contradiction. Thus, $\mu_{H}^{G,X}(n-1,n)$ is not bounded by any computable function.
\end{example}

\section{Connectedness in asymptotic cones}

We begin by defining an analog of the generalized distortion function for the case of a metric space $S$. 

\begin{defn}
Given a metric space $S$, a real number $r > 0$, and two points $s,t \in S$, an $\textit{r-path}$ connecting $s$ and $t$ is a sequence of points $s=s_{0}, \, s_{1}, \, \dots \, ,s_{k} = t$ with $d_{S}(s_{i},s_{i+1}) \leq r$ for all $ 0 \leq i < k$. We call $k$ the $\textit{length}$ of the $r$-path. We say a metric space $S$ is $\textit{r-connected}$ if for any two points $s,t \in S$ there exists an $r$-path connecting $s$ and $t$. If $(S,s)$ is a pointed $r$-connected metric space, and $t$ is in $S$, let $|t|_{r}$ be the length of the shortest $r$-path connecting $s$ and $t$.
\end{defn}

\begin{defn} Let $(S,s)$ be a proper r-connected pointed metric space. Define $\nu_{S}(m,n) \colon \mathbb{R}^{\geq r} \times \mathbb{R}^{\geq 0} \rightarrow \mathbb{N}$ to be $\textrm{max}\{|t|_{m}\mid d_{S}(s,t) \leq n\}.$ \end{defn}

\begin{lem}
$\nu_{S}$ is well-defined, i.e. for all real numbers $m \geq r ,n$ there exists a constant $K \in \mathbb{R}$ such that for any point $t \in S$ with $d(s,t) \leq n$, $|t|_{m} \leq K$.
\end{lem}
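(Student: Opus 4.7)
The plan is to prove the lemma by contradiction, using properness of $S$ to extract a convergent subsequence and then extending a path by a single step. Fix $m \geq r$ and $n \in \mathbb{R}^{\geq 0}$, and suppose no such $K$ exists. Then there is a sequence $(t_{k})$ in the closed ball $\overline{B}(s,n)$ with $|t_{k}|_{m} \to \infty$. Since $S$ is proper, $\overline{B}(s,n)$ is compact, so after passing to a subsequence we may assume $t_{k} \to t$ for some $t \in \overline{B}(s,n)$.

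Next I would verify that $|t|_{m}$ is itself finite. Because $m \geq r$ and $S$ is $r$-connected, any $r$-path is automatically an $m$-path, so $r$-connectedness supplies an $m$-path from $s$ to $t$ and hence $|t|_{m} < \infty$. Set $\ell = |t|_{m}$. Since $t_{k} \to t$, for all sufficiently large $k$ we have $d(t_{k},t) \leq m$, so an $m$-path $s = s_{0}, s_{1}, \dots, s_{\ell} = t$ of length $\ell$ can be extended by the single step $t \to t_{k}$ to yield an $m$-path from $s$ to $t_{k}$ of length $\ell + 1$. This gives $|t_{k}|_{m} \leq \ell+1$ for all large $k$, contradicting $|t_{k}|_{m} \to \infty$.

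There is essentially no hard step here; the argument is a standard compactness-plus-extension trick. The only place to be slightly careful is that the definition of an $r$-path uses the non-strict inequality $d(s_{i},s_{i+1}) \leq r$, which is exactly what is needed so that $t_{k} \to t$ together with $d(t_{k},t) \leq m$ (rather than strict $<m$) suffices to append the extra step. Provided one records that convention, the proof reduces to the two-line contradiction above.
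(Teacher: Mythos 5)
Your proof is correct and uses essentially the same ingredients as the paper's: compactness of the closed ball (from properness), finiteness of $|t|_{m}$ at individual points (from $r$-connectedness with $m \geq r$), and appending a single step of length at most $m$. The paper argues directly by covering the ball with finitely many $m$-balls and taking a maximum over the centers' path lengths, whereas you argue by contradiction via sequential compactness; these are interchangeable phrasings of the same idea.
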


\begin{proof}
Fix $n \in \mathbb{R}^{\geq 0}$, and let $B$ be the closed ball centered at $s$ of radius $n$. As $B$ is compact, it can be covered by some finite number $p$ of open balls of radius $m$. Let $s_{1},\dots s_{p}$ be the centers of these balls. As $S$ is $r$-connected for each $s_{i}$ there exists a sequence of points $$s = s_{0,i}, \, s_{1,i}, \dots,\,  s_{K_{i},i} = s_{i}$$ with $d_{S}(s_{j,i},s_{j+1,i}) \leq m$ for all $0 \leq i < K_{i}.$ Let $K = \textrm{max} \{K_{i}\mid 1 \leq i \leq p\}$. Any point in $B$ is within $m$ of some $s_{i}$, and so $\nu_{S}(m,n) \leq K + 1$.
\end{proof}
If $H$ is a finitely-generated subgroup of a finitely generated group $G$, and $X$ is a finite generating set fo $G$ containing a generating set for $H$, then $H$ is 1-connected and proper with respect to the word metric induced by $X$. It is clear in this case that $\mu^{G}_{H}$ is the restriction of $\nu_{H}$ to $\mathbb{N} \times \mathbb{N}$, where we consider $H$ with the word metric induced from $G$. 

\begin{defn}
Given two functions $f,g \colon \mathbb{R}^{\geq r} \times \mathbb{R}^{ \geq 0}\rightarrow \mathbb{R}$ which are non-increasing in the first variable, and non-decreasing in the second variable, we write $f \preceq g$ if there exists a constant $C \in \mathbb{R}$ such that $f(Cm,n) \leq Cg(m,Cn)$ for all $m,n \in \mathbb{R}^{\geq 0}, m \geq r$ and we say that $f \cong g$ if $f \preceq g$ and $g \preceq f$.
\end{defn}

Essentially, $\nu$ measures how far away $S$ is from being a geodesic metric space. For instance, if $S$ is geodesic, then $\nu_{S}(m,n) = \left \lceil n/m \right \rceil$.

\begin{figure}[h]
\centering
\includegraphics[width=0.7\textwidth]{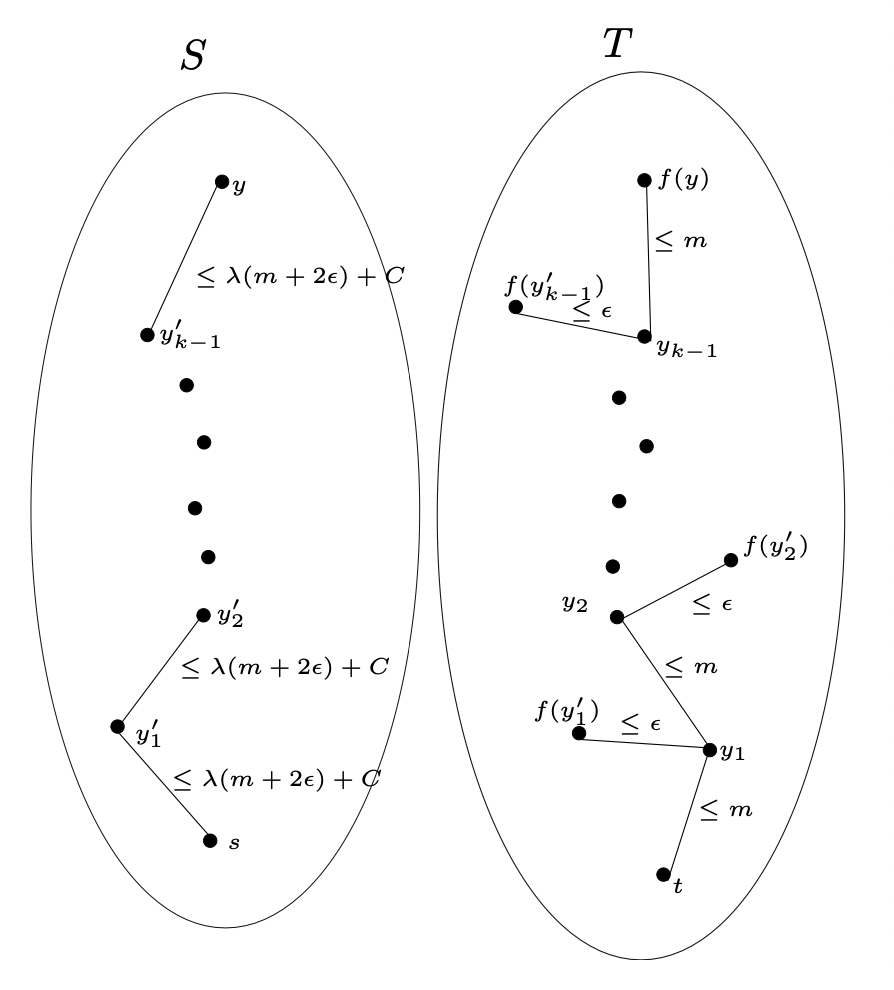}
\caption{Lemma 4.5}
\end{figure}

\begin{lem} If $(S,s),(T,t)$ are proper, r-connected pointed metric spaces, and $f$ is a $(\lambda,C,\epsilon)$-quasi-isometry between $S$ and $T$ such that $f(s) = t,$ then, $\nu_{S} \cong \nu_{T}$.
\end{lem}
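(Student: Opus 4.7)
The plan is to prove $\nu_T \preceq \nu_S$; the reverse inequality $\nu_S \preceq \nu_T$ follows by applying the same argument to a quasi-inverse $g\colon T \to S$ of $f$, which is itself a quasi-isometry and sends $t$ to within a bounded distance of $s$ (changing the basepoint in $S$ by a bounded amount perturbs $\nu_S$ only by a $\cong$-equivalent amount, so one may essentially assume $g(t) = s$).

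For the main direction, fix $m \geq r$ and $n \geq 0$, and let $t' \in T$ with $d_T(t, t') \leq n$. By $\epsilon$-quasi-surjectivity, choose $s' \in S$ with $d_T(f(s'), t') \leq \epsilon$. The quasi-isometric embedding lower bound yields
$$d_S(s, s') \leq \lambda\bigl(d_T(t, f(s')) + C\bigr) \leq \lambda(n + \epsilon + C).$$
By the definition of $\nu_S$, there is an $m$-path $s = s_0, s_1, \ldots, s_k = s'$ in $S$ with length $k \leq \nu_S\bigl(m, \lambda(n + \epsilon + C)\bigr)$. Applying $f$ pointwise, the quasi-isometric embedding upper bound gives consecutive gaps of at most $\lambda m + C$ along the sequence $t = f(s_0), \ldots, f(s_k) = f(s')$ in $T$; appending $t'$ adds a single final jump of length at most $\epsilon$, producing a path from $t$ to $t'$ in $T$ of length $k+1$ whose gaps are bounded by $\max(\lambda m + C,\, \epsilon)$.

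I would then choose a constant $D$ with $D \geq \lambda + C/r$, $D \geq \epsilon/r$, and $D \geq \lambda(1 + \epsilon + C)$. This ensures $Dm \geq \max(\lambda m + C, \epsilon)$ for all $m \geq r$ and $Dn \geq \lambda(n + \epsilon + C)$ for all $n \geq 1$. For $n \geq 1$ the construction witnesses
$$|t'|_{Dm} \leq \nu_S\bigl(m, \lambda(n + \epsilon + C)\bigr) + 1 \leq \nu_S(m, Dn) + 1,$$
giving the desired $\preceq$-bound up to an additive $1$. For $0 \leq n < 1$, the bound $d_S(s, s') \leq \lambda(1 + \epsilon + C)$ is a universal constant, so $|t'|_{Dm}$ is uniformly bounded by a quantity depending only on $f$ and $r$, which is absorbed into the equivalence.

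The main obstacle is bookkeeping the additive term $\lambda(\epsilon + C)$ in the estimate for $d_S(s,s')$: it does not scale with $n$, which forces the small-$n$ regime to be handled as a separate case, together with the "+1" coming from the final bridging jump from $f(s')$ to $t'$. Both of these additive terms are absorbed by the multiplicative slack in the equivalence $\preceq$, and the only non-routine input beyond this is the (standard) existence of a quasi-inverse $g$ based near $s$, used to get the symmetric inequality.
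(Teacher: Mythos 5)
Your proof is correct and is essentially the paper's argument run in the mirror-image direction: the paper establishes $\nu_{S} \preceq \nu_{T}$ by lifting an $m$-path in $T$ back to $S$ vertex-by-vertex via quasi-surjectivity, while you establish $\nu_{T} \preceq \nu_{S}$ by pushing an $m$-path in $S$ forward through $f$ and bridging to $t'$ with a single quasi-surjectivity step, with both arguments then invoking symmetry (via a quasi-inverse) for the remaining inequality. Your handling of the additive constants and of the small-$n$ regime is at the same level of care as the paper's own proof, so there is nothing to object to.
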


\begin{proof}
First, fix $n \in \mathbb{R}^{\geq 0}, m \in \mathbb{R}^{\geq r}$, and let $y \in S$ with $d_{S}(s,y) \leq n$. This implies that $d_{T}(t,f(y)) \leq \lambda n + C$. Let $K = \nu_{T}(m,\lambda n+C)$. There exist $K + 1$ points $y_{0},\, y_{1} \, \dots y_{K}$ such that $t=y_{0}, \, y_{1}, \dots, \, y_{K}=f(y)$ with $d_{T}(y_{i},y_{i+1}) \leq m$. By quasi-surjectivity, for each $i$ there exists an $y_{i}' \in S$ such that $d_{T}(f(y_{i}'),y_{i}) \leq \epsilon$. Thus, $d_{T}(f(y_{i}'),f(y_{i+1}')) \leq m+2\epsilon,$ and so $d_{S}(y_{i}',y_{i+1}') \leq \lambda (m+2\epsilon)+C \leq \lambda'm$ for some fixed $\lambda'$ as $m \geq r$. Note that we can choose $y_{0}'$ to be $s$, and $y_{K}'$ to be $y$. Thus $\nu_{S}(\lambda'm,n) \leq \nu_{T}(m,\lambda n+C).$ If $\lambda n+C \leq m$, we have that $\nu_{T}(m,\lambda n+C) = 1$, so we can assume that $\lambda n+C$ is greater than $r$ as well, and we have that $\nu_{S}(\lambda'm,n) \leq \nu_{T}(m,\lambda'' n)$ for some fixed $\lambda''$. By symmetry, $\nu_{T} \preceq \nu_{S}$, and so $\nu_{T} \cong \nu_{S}$.
\end{proof}

\begin{defn}
Call a metric space $S$ $\textit{asymptotically transitive}$ if $Cone^{\omega}(S)$ is transitive for all ultrafilters $\omega$.
\end{defn}

\begin{thm} 
   Let $r$ be a positive number and let $(S,s)$ be an asymptotically transitive proper r-connected pointed metric space. The following are equivalent:
\begin{enumerate}

\item there exists a function $f \colon \mathbb{R}^{\geq 0} \rightarrow \mathbb{R}^{\geq 0}$ such that for all $m \geq r, n \geq 0$, \newline $\nu_{S}(m,n) \leq f(n/m)$,

\item there exists a constant $K$ such that $\nu_{S}(i,4i) \leq K$ for all real numbers $i \geq r$,

\item $Cone^{\omega}(S)$ is path connected for all non-principal ultrafilters $\omega$,

\item $Cone^{\omega}(S)$ is connected for all non-principal ultrafilters $\omega$.
\end{enumerate}
\end{thm}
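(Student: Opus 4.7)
The plan is to prove the cycle of implications $(1) \Rightarrow (2) \Rightarrow (3) \Rightarrow (4) \Rightarrow (1)$. The implication $(1) \Rightarrow (2)$ is immediate (take $K = f(4)$), and $(3) \Rightarrow (4)$ is tautological, so the substantive work is in $(2) \Rightarrow (3)$ and $(4) \Rightarrow (1)$.

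For $(2) \Rightarrow (3)$, I would first establish a bilateral scaling lemma in the cone: for any $u, v \in Cone^{\omega}(S)$ with $d^{\omega}(u,v) = d$ and any real $\alpha > d/4$, there exists a chain $u = w_{0}, w_{1}, \ldots, w_{N} = v$ in $Cone^{\omega}(S)$ with $N \leq K$ and $d^{\omega}(w_{j}, w_{j+1}) \leq \alpha$. This follows by using asymptotic transitivity to translate $u$ to the basepoint $s$, representing the image of $v$ as $(v_{i})^{\omega}$, noting that for $\omega$-most $i$ both $\alpha c_{i} \geq r$ and $d_{S}(s, v_{i}) < 4 \alpha c_{i}$ hold, and applying (2) in $S$ to produce a chain of length $\leq K$ with step $\leq \alpha c_{i}$ whose $\omega$-limit, pulled back by the inverse isometry, is the claimed chain.

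With this lemma in hand, I would construct a path from $u$ to $v$ by iterated refinement. Let $D = d^{\omega}(u,v)$. At level $n$, I produce points $w_{0, n}, \ldots, w_{K^{n}, n}$ satisfying $w_{kK, n+1} = w_{k, n}$ and $d^{\omega}(w_{k, n}, w_{k+1, n}) \leq D/3^{n}$; the factor $3$ is chosen because refining a pair at distance $\leq D/3^{n}$ uses $\alpha = D/3^{n+1} > (D/3^{n})/4$, which falls within the scaling lemma. Parametrizing by the step functions $\phi_{n}(t) := w_{\lfloor t K^{n}\rfloor, n}$, a direct estimate yields $\|\phi_{n} - \phi_{n+1}\|_{\infty} \leq KD/3^{n+1}$, so $\phi_{n}$ is Cauchy in the sup-metric and converges uniformly to some $\phi \colon [0,1] \to Cone^{\omega}(S)$ with $\phi(0) = u$ and $\phi(1) = v$. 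Continuity of $\phi$ then follows by combining the tail estimate $\|\phi - \phi_{n}\|_{\infty} \leq KD/(2 \cdot 3^{n})$ with the observation that $|a - b| \leq 1/K^{n}$ forces $\phi_{n}(a)$ and $\phi_{n}(b)$ to differ by at most one level-$n$ chain step, bounding $d^{\omega}(\phi(a), \phi(b))$ by $(K+1)D/3^{n}$, which tends to $0$ as $n \to \infty$.

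For $(4) \Rightarrow (1)$, I would argue the contrapositive. If (1) fails, there exist a constant $L$ and sequences $m_{k} \geq r$, $n_{k}$ with $n_{k}/m_{k} \leq L$ together with elements $t_{k} \in S$ satisfying $d_{S}(s, t_{k}) \leq n_{k}$ and $|t_{k}|_{m_{k}} \to \infty$. Using scaling $c_{k} = m_{k}$ and any non-principal ultrafilter $\omega$, the point $t := (t_{k})^{\omega}$ satisfies $d^{\omega}(s, t) \leq L$ in $Cone^{\omega}(S)$. The set of points joined to $s$ by a finite chain of step size at most $1/2$ is clopen in the cone (open because one can always extend a chain by a point within $1/2$, closed because any limit lies within $1/2$ of some chain-connected approximant). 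If $t$ were in this set via a chain $s = q_{0}, \ldots, q_{M} = t$, then representing each $q_{j}$ as $(q_{j}^{k})$ and intersecting the $M$ $\omega$-large sets on which $d_{S}(q_{j}^{k}, q_{j+1}^{k}) \leq m_{k}$ would produce $m_{k}$-chains in $S$ from $s$ to $t_{k}$ of length $M$ for $\omega$-many $k$, contradicting $|t_{k}|_{m_{k}} > M$ for large $k$. Hence $Cone^{\omega}(S)$ is disconnected, violating (4). The main obstacle is the uniform-continuity verification in $(2) \Rightarrow (3)$: one must confirm that the step-function parametrizations $\phi_{n}$ converge to a genuinely continuous limit, and not merely to a map that is $\epsilon$-chain-connected at each scale; the remainder is routine bookkeeping with $\omega$-ultrafilters, the scaling lemma, and isometries of the cone.
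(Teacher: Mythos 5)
Your proposal is correct and follows essentially the same route as the paper: the same cycle of implications, the same key subdivision lemma obtained from asymptotic transitivity together with condition (2) (the paper's Lemma 4.9, which you recover with $\alpha = d/2$), the same iterated-refinement construction of the path, and the same contradiction for $(4)\Rightarrow(1)$ via the clopen chain-component argument (the paper's Lemma 4.10). The only differences are cosmetic --- you refine by a factor of $3$ with step-function uniform convergence where the paper halves distances and extends from a dense set by completeness, and in $(4)\Rightarrow(1)$ you vary the scaling sequence (note one should check $m_{k}\to\infty$, which follows from properness) where the paper varies the ultrafilter.
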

Note that the implication $1) \Rightarrow 2)$ is clear, simply by letting $K = f(4)$. The implication $3) \Rightarrow 4)$ is also immediate.

To show that $2)$ implies $3)$ we will need the following lemma.

\begin{lem}
    Let $r \in \mathbb{R}^{\geq 0}$. If $(S,s)$ is an asymptotically transitive, proper, $r$-connected, pointed metric space and there exists a constant $K$ such that $\nu_{S}(i,4i) \leq K$ for all real numbers $i \geq r$, then for any points $p=(y_{i})^{\omega}, q=(z_{i})^{\omega} \in Cone^{\omega}(S)$, there exist $K+1$ points \newline $p= p_{0}, \, p_{1}, \, p_{2}, \,...,\, p_{K} = q$ in $Cone^{\omega}(S)$ such that $d^{\omega}_{S}(p_{i},p_{i+1}) \leq d_{S}^{\omega}(p,q)/2$.
\end{lem}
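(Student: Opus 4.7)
My plan is to use asymptotic transitivity to reduce to the case where one endpoint is the basepoint $(s)^{\omega}$, and then extract the intermediate points directly by applying the hypothesis $\nu_{S}(i,4i)\leq K$ at the scale $i = Dc_{i}/2$, where $D = d_{S}^{\omega}(p,q)$.

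If $D = 0$ the claim is trivial (take $p_{j} = p$ for every $j$), so assume $D > 0$. By asymptotic transitivity I fix an isometry $\phi \colon Cone^{\omega}(S) \to Cone^{\omega}(S)$ with $\phi(p) = (s)^{\omega}$, and write $\phi(q) = (w_{i})^{\omega}$, so that $\lim^{\omega} d_{S}(s,w_{i})/c_{i} = D$. Now set $m_{i} = Dc_{i}/2$. Because $c_{i}\to\infty$ and the ratios $d_{S}(s,w_{i})/c_{i}$ converge $\omega$-almost surely to $D$, the set
\[ A = \{\, i \in \mathbb{N} : m_{i} \geq r \ \text{and}\ d_{S}(s,w_{i}) \leq 4m_{i}\,\} \]
is $\omega$-large. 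For each $i \in A$ the hypothesis $\nu_{S}(m_{i},4m_{i}) \leq K$ produces points $s = s_{0,i}, s_{1,i}, \ldots, s_{K,i} = w_{i}$ with $d_{S}(s_{j,i},s_{j+1,i}) \leq m_{i}$ (padding by repetition if the minimal $m_{i}$-path has fewer than $K$ steps). For $i \notin A$ I simply set $s_{j,i} = s$ for $j<K$ and $s_{K,i} = w_{i}$, so that the endpoint sequences are $s$ and $w_{i}$ throughout.

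Define $p'_{j} = (s_{j,i})^{\omega}$. Each sequence $(s_{j,i})$ lies in $S^{\mathbb{N}}_{z}$: on $A$ we have $d_{S}(s,s_{j,i}) \leq K m_{i} = KDc_{i}/2$, the choice $s_{j,i} = s$ is bounded, and $(w_{i})$ is already in $S^{\mathbb{N}}_{z}$ since $(w_{i})^{\omega}$ is a point of the cone. Hence $p'_{0} = (s)^{\omega} = \phi(p)$ and $p'_{K} = (w_{i})^{\omega} = \phi(q)$, and for every $0\le j<K$, using that $A\in\omega$,
\[ d_{S}^{\omega}(p'_{j}, p'_{j+1}) = \lim{}^{\omega} \frac{d_{S}(s_{j,i},s_{j+1,i})}{c_{i}} \leq \lim{}^{\omega} \frac{m_{i}}{c_{i}} = \frac{D}{2}. \]
Pulling back, $p_{j} := \phi^{-1}(p'_{j})$ gives $p_{0} = p$, $p_{K} = q$, and $d_{S}^{\omega}(p_{j},p_{j+1}) \leq D/2$ because $\phi^{-1}$ is an isometry.

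The main subtlety is that asymptotic transitivity is a property of the cone and gives no isometry of $S$ itself, so the intermediate points must be constructed from the already-translated sequence $(w_{i})$; once that is done, everything reduces to checking that the $\omega$-limit is controlled on the $\omega$-large set $A$, which is forced by the specific choice of scale $m_{i} = Dc_{i}/2$.
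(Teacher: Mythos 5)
Your proof is correct and follows essentially the same route as the paper's: reduce to the basepoint via (asymptotic) transitivity, apply the hypothesis $\nu_{S}(i,4i)\leq K$ at the scale $Dc_{i}/2$ on an $\omega$-large set of indices, and take $\omega$-limits of the resulting $K$-step chains. You are merely more explicit about the isometry $\phi$, the padding, and the definition of the sequences off the $\omega$-large set, all of which the paper leaves implicit.
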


\begin{proof} 
    If $(y_{i})^{\omega} = (z_{i})^{\omega}$, the result is trivial, so let $(y_{i})^{\omega}$ and $(z_{i})^{\omega}$ be points in  $Cone^{\omega}(S)$ such that $d_{S}^{\omega}((y_{i})^{\omega},(z_{i})^{\omega}) = C > 0$. Note that by the transitivity of $Cone^{\omega}(S)$, we can assume that $(y_{i})^{\omega} = (s)^{\omega}$. This means in particular that $d_{S}(s,z_{i}) \leq 2Ci$ $\omega$-almost surely. Note that $Ci/2 \geq r$ $\omega$-almost surely, and hence $\nu_{S}\left(Ci/2, 2Ci\right) \leq K \textrm{ }\omega \textrm{-almost surely}.$ It follows that there exist points $s=y_{i,0}, \, y_{i,1}, \,..., \, y_{i,K} = z_{i}$ with $d_{S}(y_{i,j},y_{i,j+1}) \leq Ci/2$ for all $0 \leq j \leq K-1$ $\omega$-almost surely.
   Now define $p_{j} = (y_{i,j})^{\omega}.$ Note that $d_{S}^{\omega}(p_{j},p_{j+1}) = \textrm{lim}^{\omega} d_{S}(y_{i,j},y_{i,j+1})/i \leq C/2,$ and so we have our desired $p_{0}, \,..., \, p_{K}$. 
\end{proof}

We will also need the following Lemma in order to prove that $4)$ implies $1)$.

\begin{lem} 
    If $S$ is a connected metric space, then for any real number $r > 0$, $S$ is r-connected.
\end{lem}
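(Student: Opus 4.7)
The plan is to fix an arbitrary $r > 0$ and a basepoint $s \in S$, and to show that the set
\[
A = \{\, t \in S \mid \text{there exists an } r\text{-path from } s \text{ to } t\,\}
\]
is simultaneously nonempty, open, and closed in $S$. Since $S$ is connected, this forces $A = S$, which is exactly the statement that any two points of $S$ (by symmetry and concatenation, with $s$ playing the role of any chosen basepoint) are joined by an $r$-path.

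Nonemptiness is immediate: the constant sequence $s = s_0$ shows $s \in A$. For openness, I would fix $t \in A$ with a witnessing $r$-path $s = s_0, s_1, \ldots, s_k = t$, and observe that any $t' \in B(t, r)$ satisfies $d(t, t') < r$, so appending $t'$ to the path yields an $r$-path from $s$ to $t'$; hence $B(t,r) \subseteq A$. For closedness, the cleanest route is to show the complement $S \setminus A$ is open by the same argument run in reverse: if $u \notin A$ and $u' \in B(u, r)$, then $u'$ cannot lie in $A$ either, since an $r$-path from $s$ to $u'$ could be extended by $u$ (as $d(u', u) < r$) to produce an $r$-path from $s$ to $u$, contradicting $u \notin A$. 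Thus $B(u, r) \subseteq S \setminus A$, so $S \setminus A$ is open.

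Combining the three properties with the connectedness hypothesis on $S$ gives $A = S$. In particular, for any $t \in S$ there is an $r$-path from $s$ to $t$, and reversing such a path together with concatenation shows that any pair of points in $S$ is joined by an $r$-path, so $S$ is $r$-connected.

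I do not expect any serious obstacle here; this is the standard argument that in a connected metric space the relation ``joined by an $r$-chain'' is an equivalence relation whose classes are clopen. The only mild subtlety is being careful with the inequality used in Definition 4.1 (which requires $d(s_i, s_{i+1}) \leq r$, not strict inequality), but using open balls of radius $r$ in the openness and closedness arguments gives strict inequality $d < r$, which comfortably implies the required $\leq r$.
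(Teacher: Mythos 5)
Your proof is correct and follows essentially the same route as the paper: both arguments show that the set of points joined to a fixed basepoint by an $r$-path is nonempty, open, and closed (via the $r$-ball argument in both directions), and then invoke connectedness to conclude the set is all of $S$. No issues.
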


\begin{proof} 
For a fixed $r > 0$, and fixed $p \in S$, consider the set $C$ of points $q$ such that there exists a finite sequence of points $p = p_{0}, \, p_{1} \, \dots \, p_{K} = q$ with $d(p_{i},p_{i+1}) \leq r$. If $x \in C$, then clearly $B_{r}(x) \subset C$, and so $C$ is open. Similarly, if $x \not \in C$, then $B_{r}(x) \subset S \setminus C$, so $C$ is closed. Hence, $C$ is open, closed and non-empty, so $C = S$, as desired.
\end{proof}

We are now ready to prove the theorem.

\begin{proof} 
	We begin by proving $2)$ implies $3)$.
	
    Let $p,q \in Cone^{\omega}(S)$, and let $C = d^{\omega}_{S}(p,q)$. We will define a uniformly continuous function $f$ from numbers of the form $a/K^{n}$ with $a,n \in \mathbb{N}$ $a \leq K^{n}$ to the asymptotic cone such that 
$f(0)=p$ and $f(1) = q$. Note that this is sufficient, since asymptotic cones are complete, and these numbers are dense in the interval $[0,1]$.

    We will define the function inductively as follows. First, define $f(0) = p$ and $f(1) = q$. Then, fix $n \in \mathbb{N}$, and assume we've defined $f$ on all numbers of the form $a/K^{n}$ in such  a way that for all $s \in \mathbb{N} \cup \{0\}$ with $s < K^{n}$ $$d^{\omega}_{S}\left(f\left(\frac{s}{K^{n}}\right),f\left(\frac{s+1}{K^{n}}\right)\right) \leq \frac{C}{2^{n}}.$$ Now let $t = (K\ell+b)/K^{n+1}$ where $1 \leq b < K$ and $\ell \in \mathbb{N} \cup \{0\}$, $\ell \leq K^{n-1}$ According to Lemma 4.8, there exist points $p_{0},p_{1},\dots,p_{K}$ such that $$f\left(\frac{\ell}{K^{n}}\right) = p_{0},p_{1},...,p_{K} = f\left(\frac{\ell+1}{K^{n}}\right),$$ and $$d^{\omega}_{S}(p_{i},p_{i+1}) \leq \frac{d_{S}^{\omega}(f(\frac{\ell}{K^{n}}),f(\frac{\ell+1}{K^{n}}))}{2} \leq \frac{C}{2^{n+1}}.$$ Let $f(t) = p_{b}$. It is straightforward to verify that $f$ is uniformly continuous.

    We will now show that $4)$ implies $1)$ by contradiction. Assume that $Cone^{\omega}(S)$ is connected, and that $\nu_{S}(m,n)$ is not bounded by any homogeneous function. Hence there exists a $c \in \mathbb{R}^{>0}$ such that $\nu_{S}(n,cn)$ is not bounded. Let $n_{i}$ be a sequence of natural numbers such that $\nu_{S}(n_{i},cn_{i}) \geq i$. Let $\omega$ be an ultrafilter containing $\{n_{i} | i \in \mathbb{N}\}$. Consider a sequence of points $t_{i} \in S$ such that $d_{S}(s,t_{i}) \leq ci$, and $|t_{i}|_{i} = \nu_{S}(i,ci)$. According to Lemma 4.9, we can pick points $(s)^{\omega} = p_{0},p_{1},...,p_{k} = (t_{i})^{\omega}$ in $Cone^{\omega}(S)$ such that $d^{\omega}_{S}(p_{i},p_{i+1}) \leq \frac{1}{2}$. Let $p_{j} = (t_{i,j})^{\omega}$. We have that $d_{S}(t_{i,j},t_{i,j+1}) \leq i \textrm{ } \omega\textrm{-almost surely},$ so $\nu_{S}(i,ci)=|t_{i}|_{i} \leq k \textrm{ }\omega\textrm{-almost surely}.$ On the other hand if $j > k$, then $\nu_{S}(n_{j},cn_{j}) > k$. However, $$\{n_{j}| j > k\} = \{n_{j} | j \in \mathbb{N}\} \cap \{n | n > n_{k}\} \in \omega,$$ a contradiction. 
\end{proof}

We now want to study how distortion of groups relates to connectedness in asymptotic cones. We begin by defining a natural subspace of the asymptotic cone of $G$ corresponding to $H$.

\begin{defn} 
    Let $T$ be a subspace of a metric space $S$. Denote by $Cone^{\omega}_{S}(T)$ the set of all points in $Cone^{\omega}(S)$ with a representative $(t_{i})^{\omega}$ with each component in $T$. 
\end{defn}

\begin{lem} 
    For all subspaces $T \subset S$, $\textrm{Cone}^{\omega}_{S}(T)$ is closed in $\textrm{Cone}^{\omega}(S)$.
\end{lem}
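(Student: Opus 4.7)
The strategy is to verify that $Cone^{\omega}_{S}(T)$ is sequentially closed in the metric space $Cone^{\omega}(S)$. Suppose $p^{(n)} \to p$ in $Cone^{\omega}(S)$ with each $p^{(n)} \in Cone^{\omega}_{S}(T)$. After passing to a subsequence I may assume $d^{\omega}_{S}(p, p^{(n)}) < 1/n$ for every $n$. Fix a representative $p = (s_{i})^{\omega}$, and for each $n$ fix a representative $p^{(n)} = (t^{(n)}_{i})^{\omega}$ with every entry $t^{(n)}_{i} \in T$. The aim is to produce, via a diagonal ultrafilter argument, a single sequence $(u_{i})$ with every $u_{i} \in T$ and $(u_{i})^{\omega} = p$; this will force $p \in Cone^{\omega}_{S}(T)$.

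For each $n$, the inequality $\lim^{\omega} d_{S}(s_{i}, t^{(n)}_{i})/c_{i} < 1/n$ implies that the set
$$A_{n} := \left\{ i : d_{S}(s_{i}, t^{(n)}_{i})/c_{i} < 2/n \right\}$$
lies in $\omega$, so the nested intersections $B_{n} := A_{1} \cap \cdots \cap A_{n}$ all lie in $\omega$ as well, and satisfy $B_{1} \supseteq B_{2} \supseteq \cdots$. I then define a diagonal index $n(i) := \max\{ n \leq i : i \in B_{n}\}$ whenever this set is nonempty, set $u_{i} := t^{(n(i))}_{i}$ on those indices, and let $u_{i}$ be an arbitrary but fixed element of $T$ otherwise (the claim is vacuous if $T = \emptyset$). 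By construction $u_{i} \in T$ for every $i$.

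To finish I need to check that $\lim^{\omega} d_{S}(s_{i}, u_{i})/c_{i} = 0$. Given $\varepsilon > 0$, choose $N$ with $2/N < \varepsilon$; the set $B_{N} \cap \{ i : i \geq N\}$ lies in $\omega$ because $\omega$ is non-principal, and on this set $n(i) \geq N$, so $i \in B_{n(i)} \subseteq A_{n(i)}$ and therefore
$$d_{S}(s_{i}, u_{i})/c_{i} \;=\; d_{S}(s_{i}, t^{(n(i))}_{i})/c_{i} \;<\; 2/n(i) \;\leq\; 2/N \;<\; \varepsilon.$$
Since $\varepsilon$ was arbitrary the ultralimit vanishes, and therefore $p = (u_{i})^{\omega} \in Cone^{\omega}_{S}(T)$.

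The main obstacle is the diagonal bookkeeping. One has to tie the cut-off $n(i) \leq i$ to the nested chain $B_{1} \supseteq B_{2} \supseteq \cdots$ so that cofinitely many indices carry a genuine label and, crucially, arbitrarily large labels $n(i)$ are available $\omega$-almost surely. The inclusion $B_{n} \subseteq A_{k}$ for $k \leq n$ is exactly what allows the single sequence $(u_{i})$ to inherit closeness to $(s_{i})$ from the \emph{entire} family $p^{(n)}$ at once, rather than merely from one approximant at a time.
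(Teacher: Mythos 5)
Your argument is correct, but it takes a genuinely different route from the paper. The paper's proof is a three-line citation: it observes that $Cone^{\omega}_{S}(T)$ coincides with $Cone^{\omega}(T)$ for the metric induced from $S$, invokes the standard fact that asymptotic cones are complete, and concludes because a complete subspace of a metric space is closed. You instead prove sequential closedness by hand, manufacturing a single $T$-valued representative of the limit point via the nested sets $B_{1}\supseteq B_{2}\supseteq\cdots$ in $\omega$ and the diagonal label $n(i)$; this is in effect an unwinding of the usual proof that asymptotic cones are complete, specialized to $Cone^{\omega}(T)$. The bookkeeping is sound: $A_{n}\in\omega$ follows from the definition of the ultralimit, $i\in B_{n(i)}\subseteq A_{n(i)}$ holds by construction, and non-principality is exactly what forces $n(i)\geq N$ on a set in $\omega$. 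Your version is self-contained and makes visible where non-principality enters; the paper's is shorter but treats completeness as a black box. One small point to tidy: a representative of a point of $Cone^{\omega}(S)$ must lie in $S^{\mathbb{N}}_{z}$, i.e.\ $d_{i}(z_{i},u_{i})$ must be bounded over \emph{all} indices, not merely $\omega$-almost surely, and on the indices where $\{n\le i : i\in B_{n}\}$ is empty your fixed element of $T$ need not satisfy this when the observation point $(z_{i})$ is not essentially constant. Setting $u_{i}:=t^{(1)}_{i}$ on those indices repairs this at no cost, since $(t^{(1)}_{i})$ already lies in $S^{\mathbb{N}}_{z}$ and those indices form a set outside $\omega$.
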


\begin{proof}
    Note that $Cone^{\omega}_{S}(T) = Cone^{\omega}(T)$ where we consider $T$ under the induced metric from $S$. Since asymptotic cones are complete, this is a complete metric space. A complete subspace of a complete metric space is closed and so we have that $Cone^{\omega}_{S}(T)$ is closed in $Cone^{\omega}(S)$.
\end{proof}

Note that we can think about a subgroup $H$ of a group $G$ as a subspace of the metric space we get by considering the word metric on $G$. 

\begin{lem}
If $H$ is a subgroup of a finitely generated group $G$ such that $Cone^{\omega}_{G}(H)$ is connected for all ultrafilters $\omega$, then $H$ is finitely generated.
\end{lem}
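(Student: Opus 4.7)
The approach is by contrapositive. I assume $H$ is not finitely generated and will build a non-principal ultrafilter $\omega$ for which $Cone^{\omega}_{G}(H)$ fails to be connected. Fix a finite generating set $X$ of $G$ and write $Y_{n} = H \cap B_{X}(n)$.

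The first step is a combinatorial preparatory claim: there will exist a constant $C>0$ and an infinite set $A \subseteq \mathbb{N}$ such that for each $n \in A$ one can find $h_{n} \in H$ with $|h_{n}|_{X} \leq Cn$ and $h_{n} \notin \langle Y_{n}\rangle$. I plan to establish this by studying the function $\ell(n) = \min\{|h|_{X} : h \in H \setminus \langle Y_{n}\rangle\}$, which is well-defined because non-finite-generation means $\langle Y_{n}\rangle \subsetneq H$ for every $n$. The key observation I will exploit is a doubling bootstrap: if $\ell(n) > 2n$ for all $n \geq N$, then $Y_{2n} \subseteq \langle Y_{n}\rangle$ gives $\langle Y_{2n}\rangle = \langle Y_{n}\rangle$, and iterating this forces $\langle Y_{2^{k}N}\rangle = \langle Y_{N}\rangle$ for every $k$, hence $H = \langle Y_{N}\rangle$, contradicting non-finite-generation. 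Thus $\ell(n)/n$ cannot tend to infinity, and I extract the desired infinite set $A$ on which $\ell(n) \leq Cn$.

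With the claim in hand, I will define $h_{n}$ as above on $A$ and $h_{n} = e$ off $A$, and choose any non-principal $\omega$ containing $A$. The bound $|h_{n}|_{X} \leq Cn$ ensures $(h_{n})$ represents a point $p = (h_{n})^{\omega} \in Cone^{\omega}_{G}(H)$. Supposing for contradiction that $Cone^{\omega}_{G}(H)$ is connected, Lemma 4.9 makes it $\tfrac{1}{2}$-connected, yielding a finite chain $(e)^{\omega} = p_{0}, p_{1}, \dots, p_{s} = p$ with $d^{\omega}_{S}(p_{j},p_{j+1}) \leq \tfrac{1}{2}$. Choosing representatives $p_{j} = (g_{n,j})^{\omega}$ in $H^{\omega}$ with $g_{n,0} = e$ and $g_{n,s} = h_{n}$, each distance bound translates into a set $\{n : |g_{n,j}^{-1}g_{n,j+1}|_{X} \leq n\} \in \omega$; intersecting the finitely many such sets with $A$ gives $B \in \omega$ on which every ratio $g_{n,j}^{-1}g_{n,j+1}$ lies in $Y_{n}$. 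The telescoping identity
$$h_{n} \;=\; g_{n,s} \;=\; \prod_{j=0}^{s-1} \bigl(g_{n,j}^{-1}g_{n,j+1}\bigr)$$
then forces $h_{n} \in \langle Y_{n}\rangle$ for every $n \in B$, contradicting $h_{n} \notin \langle Y_{n}\rangle$ on $A \supseteq B$.

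The main obstacle will be the preparatory claim: without the linear bound $|h_{n}|_{X} \leq Cn$, the sequence $(h_{n})$ fails to define a point of the asymptotic cone under the standard scaling $c_{i}=i$, so no contradiction can be extracted from connectedness. The decisive ingredient is the doubling bootstrap, which converts any super-linear gap in $\ell$ into finite generation of $H$. Once this is available, the remainder is a routine $\tfrac{1}{2}$-path/telescoping argument invoking only Lemma 4.9 and the definition of $Cone^{\omega}_{G}(H)$.
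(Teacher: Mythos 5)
Your proof is correct, and it differs from the paper's in one substantive way worth noting. The paper also runs a ``witness sequence plus chain plus telescoping'' argument: it calls $h$ \emph{reducible} if $h$ is a product of elements of $H$ of strictly smaller $X$-length, extracts a sequence of irreducible elements $(h_i)$ of increasing length, and then forms the asymptotic cone with the \emph{non-standard scaling sequence} $(|h_i|_X)$ so that $(h_i)^{\omega}$ is a legitimate point of the cone; connectedness then yields a $\tfrac14$-chain whose telescoped increments exhibit $h_i$ as reducible $\omega$-almost surely. The cost of that route is that it needs the earlier remark (via the cited scaling/ultrafilter correspondence) to convert the non-standard scaling sequence back into a statement about cones with $c_i=i$, which is what the lemma's hypothesis quantifies over. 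Your doubling bootstrap --- if $\ell(n)>2n$ for all large $n$ then $\langle Y_{2n}\rangle=\langle Y_n\rangle$ stabilizes and $H=\langle Y_N\rangle$ --- removes exactly this dependency: it guarantees witnesses $h_n\notin\langle Y_n\rangle$ with $|h_n|_X\leq 2n$, so the point $(h_n)^{\omega}$ lives in the cone under the standard scaling $c_i=i$ and you only need to build one non-principal ultrafilter containing $A$. The endgame (Lemma 4.9 giving a $\tfrac12$-chain, representatives in $H^{\omega}$, intersecting finitely many $\omega$-large sets, telescoping into $\langle Y_n\rangle$) is the same mechanism as the paper's, just phrased with $\langle Y_n\rangle$-membership in place of irreducibility. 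Your version is more self-contained; the paper's is shorter but leans on the scaling-sequence conversion. Both are valid.
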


\begin{proof}
Let $H$ be a subgroup of a finitely generated group $G$, and let $X$ be a finite generating set for $G$. We call an element $h$ of $H$ $\textit{reducible}$ if there exists a constant $k \in \mathbb{N}$ and $k$ elements of $H$, $h_{1},\, h_{2} \, \dots \, h_{k}$, with $|h_{i}|_{X} < |h|_{X}$ for all $0 \leq i \leq k$ such that $h = h_{1}h_{2}\dots h_{k}$. We call an element $h \in H$ $\textit{irreducible}$ if it is not reducible. We can assume that there exists no $i$ such that all elements $h \in H$ with $|h|_{X} \geq i$ are reducible, as this would imply that $H$ is finitely generated. Thus we can find a sequence $(h_{i})$ of irreducible elements of $H$ such that $|h_{i}|_{X} > |h_{i-1}|_{X}$ for all $i$. Fix an ultrafilter $\omega$ and consider the asymptotic cone $Cone^{\omega}_{G}(H)$ with respect to $\omega$ and the scaling sequence $(|h_{i}|_{X})$. Assume this asymptotic cone is connected. As $(h_{i})^{\omega} \in Cone^{\omega}_{G}(H)$, there exist points $(e)^{\omega} = p_{0}, \, p_{1}, \, \dots \, , p_{k} = (h_{i})^{\omega}$ with $d(p_{i},p_{i+1}) \leq 1/4$ for all $0 \leq i < k$. Let $p_{j} = (h_{i,j})^{\omega}$. We have that $|h_{i,j}^{-1}h_{i,j+1}|_{X} \leq |h_{i}|_{X}/2 \textrm{ }\omega\textrm{-almost surely}.$  Finally, note that $h_{i} = h_{i,k} = h_{1,i}(h_{i,1}^{-1}h_{i,2})\dots(h_{i,k-1}^{-1}h_{i,k})$. This, however, implies that $h_{i}$ is $\omega$-almost surely reducible, a contradiction.
\end{proof}

We can apply Theorem 4.8 to a subgroup $H$ of a finitely generated group $G$, where $H$ is given the word metric induced from $G$. In this case, the relationship between $\nu_{H}$ and $\mu^{G}_{H}$ combined with theorem 4.14 gives the following theorem.
\begin{thm} 
    The following are equivalent for a  subgroup $H$ of a finitely generated group $G$:
\begin{enumerate}

\item H is finitely generated and there exists a constant $K$ such that $\mu_{H}^{G}(i,4i) \leq K$ for all $i$.

\item H is finitely generated and there exists a function $f$ such that $\mu_{H}^{G}(m,n) \leq f(\frac{n}{m})$.

\item $Cone_{G}^{\omega}(H)$ is path connected for all ultrafilters $\omega$.

\item $Cone_{G}^{\omega}(H)$ is connected for all ultrafilters $\omega$.
\end{enumerate}
\end{thm}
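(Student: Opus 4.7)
The plan is to reduce this theorem directly to Theorem 4.8 applied to $H$ viewed as a pointed metric subspace of $(G,d_X)$, where $X$ is a finite generating set of $G$ chosen to contain a generating set $Y$ of $H$. The first step is to dispose of finite generation: conditions (1) and (2) include it as a hypothesis, while Lemma 4.13 establishes that each of (3) and (4) already forces $H$ to be finitely generated. So I may assume throughout that $H$ is finitely generated, take $X\supseteq Y$ as above, and put $r=1$.

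Next I would check the three running hypotheses of Theorem 4.8 for $(H, d_X|_H)$ with basepoint $e$. Properness is immediate since balls in $(G,d_X)$ are finite and $H\subseteq G$. For $1$-connectedness, any $h\in H$ with $|h|_Y=k$ factors as $h=y_1\cdots y_k$ with $y_i\in Y\subseteq X$, producing a $1$-path $e,\,y_1,\,y_1y_2,\,\dots,\,h$ that lies in $H$. For asymptotic transitivity, observe that for each $h\in H$ left multiplication $g\mapsto hg$ is an isometry of $(G,d_X)$ that preserves $H$, hence restricts to an isometry of $(H,d_X|_H)$; this makes $(H,d_X|_H)$ itself a transitive metric space, and so $Cone^\omega(H,d_X|_H)$ is transitive for every non-principal ultrafilter $\omega$.

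Then I would match the objects and the four conditions of the two theorems. By the argument in the proof of Lemma 4.12, $Cone^\omega_G(H)$ with its subspace metric is canonically isometric to $Cone^\omega(H,d_X|_H)$, so conditions (3) and (4) of Theorem 4.14 translate verbatim to (3) and (4) of Theorem 4.8. For conditions (1) and (2), use the observation immediately preceding Definition 4.4: an $m$-path in $(H,d_X|_H)$ from $e$ to $h$ is exactly a factorization of $h$ into elements of $Y_m$, so $\nu_H(m,n)=\mu^G_H(m,n)$ for $m,n\in\mathbb N$ with $m\geq 1$. Since $d_X|_H$ is integer-valued, for real $m\geq 1$, $n\geq 0$ one has $\nu_H(m,n)=\mu^G_H(\lfloor m\rfloor,\lfloor n\rfloor)$, and because $\lfloor n\rfloor/\lfloor m\rfloor\leq 2n/m$, any bound on $\mu^G_H$ by $f(n/m)$ or by a constant on the ``diagonal'' $n=4m$ translates into the corresponding bound on $\nu_H$ after passing to a rescaled function (or absorbing a constant factor into $K$), and conversely. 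Invoking Theorem 4.8 at this point yields the equivalence of the four conditions.

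There is no real conceptual obstacle here, since the substantive work already lives in Theorem 4.8; the only care needed is the bookkeeping in the last step, where one must confirm that the ``homogeneous upper bound'' condition and the ``$\nu_S(i,4i)\leq K$'' condition are preserved under restriction from real to integer arguments, and that the appropriate isometry $Cone^\omega_G(H)\cong Cone^\omega(H,d_X|_H)$ genuinely identifies the connectedness and path-connectedness notions used in the two statements.
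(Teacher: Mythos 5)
Your proposal is correct and follows exactly the paper's route: the paper derives this theorem by applying the general metric-space equivalence (Theorem 4.7) to $H$ with the word metric induced from $G$, invoking the finite-generation lemma (Lemma 4.12) to handle conditions (3) and (4), and using the identification of $\mu^{G}_{H}$ with the restriction of $\nu_{H}$ to $\mathbb{N}\times\mathbb{N}$. You in fact supply more detail than the paper does --- verifying properness, $1$-connectedness, asymptotic transitivity, and the integer-versus-real bookkeeping --- whereas the paper states this reduction in a single sentence without a formal proof.
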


\begin{example}
We have previously seen that if $G = BS(1,p) = \langle a,b \mid b^{-1}ab = a^{p}\rangle$, and $H = \langle a \rangle$  then $\mu^{G}_{H}(m,n) \cong  p^{n-m}$. Thus $\mu^{G}_{H}(i,2i)$ is unbounded, and there exists an ultrafilter $\omega$ such that $Cone^{\omega}_{G}(H)$ is disconnected.
\end{example}
\begin{example}
If $G$ is the discrete Heisenberg group, and $H$ is the center of $G$, then we have seen in a previous example that $\mu^{G}_{H}(m,n) \cong  n^{2}/m^{2},$ and so $\mu^{G}_{H}(i,4i)$ is bounded, and $Cone^{\omega}_{G}(H)$ is connected for all ultrafilters $\omega$.
\end{example}

We now want to relate the connectedness of $Cone^{\omega}_{G}(H)$ to the distortion of $H$ in $G$. In order to do this, we need a couple preliminary results. The first of these is due to Olshanskii.

\begin{thm}\cite{olshanskii}
For any group $H$, and any function $\ell \colon H \rightarrow \mathbb{N}$ satisfying the following conditions:
\begin{enumerate}
    \item for all $h \in H$, $\ell(h)=0$ if and only if $h = 1$,
    \item $\ell(h) = \ell(h^{-1})$ for all $h \in H$,
    \item $\ell(gh) \leq \ell(g) + \ell(h)$ for all $g,h \in H$,
    \item there exists a constant $a$ such that $|\{h \in H \mid \ell(h) \leq n\}| \leq a^{n}$,
    \end{enumerate}
there exists a group $G = \langle X \rangle$ with $|X| < \infty$ , an embedding $\phi$ of $H$ in $G$, and a constant $C$ such that for all $h \in H$, $$\frac{|\phi(h)|_{X}}{C} \leq \ell(h) \leq C|\phi(h)|_{X}.$$
\end{thm}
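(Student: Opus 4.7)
The plan is to treat this as a quantitative version of the Higman--Neumann--Neumann embedding theorem, where the length function $\ell$ must be matched (up to multiplicative constants) by the word length induced from the embedding. The conditions (1)--(3) say exactly that $d_\ell(g,h) := \ell(g^{-1}h)$ is a left-invariant metric on $H$, and condition (4) is an exponential upper bound on the growth of balls in this metric. Reading the theorem this way, the goal is to produce a finitely generated group $G$ together with an $H$-equivariant quasi-isometric embedding $(H, d_\ell) \hookrightarrow \Gamma(G,X)$ whose image is the orbit of $1$ under $\phi(H)$.

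My first step would be to fix a finite set $X_0 = \{x_1,\ldots,x_r\}$ and look at a free group $F = F(X_0)$ of rank $r$ chosen large enough that $2r-1 > a$, where $a$ is the constant from condition (4). The point is that the number of reduced words in $F$ of length at most $n$ is then at least a constant times $(2r-1)^n$, which dominates the number of $h \in H$ with $\ell(h) \leq n$. This gives enough "room" to select, for every $h \in H$, a reduced word $w_h \in F$ with $|w_h|_{X_0}$ comparable to $\ell(h)$ (say within a fixed multiplicative constant $C_0$), and with the additional freedom to make the family $\{w_h w_{h'}^{-1}\}$ satisfy a small-cancellation condition of the author's choosing.

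The main construction would then be the quotient $G = (H * F) / N$, where $N$ is the normal closure of the relators $\{h \cdot w_h^{-1} : h \in H\}$. Every element of $H$ becomes equal in $G$ to a word in $X_0$, so $G$ is finitely generated by $X = X_0$. The upper bound $|\phi(h)|_X \leq C\,\ell(h)$ is built in by the choice of $w_h$. The lower bound $\ell(h) \leq C\,|\phi(h)|_X$ should follow because, up to the relations, any word in $X_0$ of length $n$ represents an element of $H$ whose $\ell$-length is at most (a constant times) $n$: one reads the relations as rewriting rules taking an $F$-word of length $\leq n$ to an $H$-element whose $\ell$-length is controlled by the rewriting.

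The hard step, and the step Olshanskii's techniques are really needed for, is showing that the natural map $\phi\colon H \to G$ is \emph{injective}. A free choice of $w_h$ could easily introduce accidental relations in $H$ via the normal closure. The remedy is to choose the words $w_h$ so that the set of relators satisfies a graded small-cancellation condition strong enough for a Greendlinger/Dehn-style argument to apply: any nontrivial consequence of the relators, when restricted to the $H$-factor, must actually have been trivial in $H$. This is where condition (4) is used in a second, more subtle way --- the exponential bound is exactly what allows an inductive selection of the $w_h$ (processing $H$ in order of increasing $\ell$-length) while preserving small cancellation at every stage. Once injectivity is secured, combining it with the two length estimates above yields the stated inequality $|\phi(h)|_X / C \leq \ell(h) \leq C\,|\phi(h)|_X$.
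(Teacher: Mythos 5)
First, a point of reference: the paper does not prove this statement at all --- it is quoted verbatim from Olshanskii and used as a black box --- so there is no internal proof to compare against. Your outline does reconstruct the broad strategy of the argument in the literature: realize each $h$ as a word $w_h$ in a free factor with $|w_h|$ comparable to $\ell(h)$, form the quotient of $H * F$ by the relators $h w_h^{-1}$, and control injectivity and length with small cancellation over free products. The architecture is right, and you correctly identify where conditions (1)--(4) enter.

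As a proof, however, it has two genuine gaps, both of which you flag but neither of which you close. First, the counting step ``$2r-1>a$ leaves enough room to impose a small-cancellation condition of the author's choosing'' does not work as stated: because $\ell$ is in general unbounded, the relators $h w_h^{-1}$ have unboundedly different lengths, and a uniform metric condition such as $C'(\lambda)$ requires pieces to be short relative to the \emph{shorter} of the two relators involved. A subword that is a negligible fraction of a long $w_{h'}$ can be all of a short $w_h$, so cardinality considerations alone cannot arrange the condition; one needs either a graded small-cancellation framework or a careful inductive selection of the $w_h$ in order of increasing $\ell$-value with quantitative overlap bounds at every scale. This is precisely the technical core of Olshanskii's argument, and the sketch only gestures at it. Second, the lower bound $\ell(h)\le C|\phi(h)|_X$ is asserted via ``reading the relations as rewriting rules,'' which is a heuristic, not an argument. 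What is needed is a quantitative Greendlinger-type statement for reduced diagrams over the free-product presentation: in a reduced diagram with boundary label $W\phi(h)^{-1}$, with $W$ a word in $X_0$, the small-cancellation condition forces the free-factor portions of the cells to lie mostly on the arc labelled $W$, giving $\sum_i |w_{h_i}|\le C|W|$, after which subadditivity of $\ell$ applied to the product decomposition of $h$ that the diagram provides yields $\ell(h)\le C'|W|$. Injectivity of $\phi$ is the special case where $W$ is empty and must be handled by the same analysis. Without these two steps carried out, the proposal is a correct plan rather than a proof; with them, it would essentially reproduce the cited argument.
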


\begin{defn} A function $f \colon \mathbb{R}^{\geq 1} \rightarrow \mathbb{R}$ is called \textit{superlinear} if for all $k \in \mathbb{R}$ the set $\{n \mid f(x) \leq kx\}$ is bounded. $f$ is called \textit{sublinear} if for all $k \in \mathbb{R}$ the set $\{x \mid f(x) \geq kx\}$ is bounded.
\end{defn}

\begin{lem}
Let $f \colon \mathbb{R}^{\geq 1} \rightarrow \mathbb{R}$ be an increasing, sublinear function with $f(r) \leq r$ for all real numbers $r \geq 1$. There exists a function $\ell \colon \mathbb{R}^{\geq 1} \rightarrow \mathbb{R}^{\geq 1}$ satisfying the following properties:
\begin{enumerate}
\item for all $m,n \in \mathbb{N}$, $\ell (m) + \ell(n) \geq \ell (m+n)$. 
\item for all $n \in \mathbb{N}$, $\ell(n) \geq f(n)$.
\item for all $k \in \mathbb{N}$, there exists a $p_{k} \in \mathbb{N}$ such that $\ell(p_{k}) = \ell(p_{k+1}) = \dots = \ell(kp_{k}).$
\end{enumerate}
\end{lem}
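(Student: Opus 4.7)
The plan is to exhibit $\ell$ as an explicit piecewise linear function whose ratio $\ell(x)/x$ is non-increasing in $x$. Any such function is automatically subadditive: writing $g(x) = \ell(x)/x$, the identity
\[
\ell(m+n) = (m+n)\,g(m+n) = m\,g(m+n) + n\,g(m+n) \leq m\,g(m) + n\,g(n) = \ell(m) + \ell(n)
\]
follows from $g$ being non-increasing. So condition (1) will come for free, and my remaining task is to shape $\ell$ so that it dominates $f$ and contains arbitrarily long flat intervals, all while preserving the monotonicity of $\ell(x)/x$.

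First, I will pick an increasing sequence of natural numbers $p_1 < p_2 < \cdots$ (to be specified inductively) and define $\ell$ piecewise by alternating ``plateaus'' and ``ramps'': on the initial interval $[1, p_1]$ set $\ell(x) = x$; on each plateau $[p_j, (j+1)p_j]$ set $\ell(x) = c_j := p_j/j!$; and on each ramp $[(j+1)p_j, p_{j+1}]$ set $\ell(x) = x/(j+1)!$. These pieces match at the endpoints, since $p_1 = p_1/1!$, $(j+1)p_j/(j+1)! = p_j/j!$, and $p_{j+1}/(j+1)! = c_{j+1}$. Each ramp is a line through the origin, so $\ell(x)/x$ is constant on it; on each plateau $\ell(x)/x = c_j/x$ decreases from $1/j!$ down to $1/(j+1)!$; and on $[1, p_1]$ it equals $1$. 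Thus $\ell(x)/x$ is non-increasing on all of $\mathbb{R}^{\geq 1}$, and condition (1) follows.

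Next, I will choose the $p_j$ large inductively to secure condition (2). On $[1, p_1]$ we already have $\ell(n) = n \geq f(n)$ from the hypothesis $f(r) \leq r$. On the plateau $[p_j, (j+1)p_j]$ I need $p_j/j! \geq f((j+1)p_j)$; sublinearity of $f$ gives $f((j+1)p_j)/p_j \to 0$ as $p_j \to \infty$, so the inequality holds once $p_j$ is large enough. On the ramp $[(j+1)p_j, p_{j+1}]$ I need $x/(j+1)! \geq f(x)$, equivalently $f(x)/x \leq 1/(j+1)!$; sublinearity again provides a threshold past which this holds, and I arrange $(j+1)p_j$ to exceed it. Both constraints are lower bounds on $p_j$, so the induction proceeds; I also require $p_j > j\,p_{j-1}$ so that consecutive plateaus remain disjoint.

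Condition (3) is then immediate by design: for each $k \geq 2$, the plateau $[p_{k-1}, k\,p_{k-1}]$ is a flat interval of $\ell$ of ratio exactly $k$, so the lemma's witness can be taken as $p_{k-1}$, while $k = 1$ is trivial. The step I expect to need the most care is maintaining the non-increasing property of $\ell(x)/x$ at every seam between a plateau and a ramp: the slope $1/(j+1)!$ on the $j$th ramp is precisely the largest slope for which the ramp is a line through the origin, and hence the largest compatible with $\ell(x)/x$ being non-increasing across that seam. This is the tightest constraint in the construction, and it is what makes sublinearity of $f$ (as opposed to a weaker decay hypothesis) the natural assumption here: it is exactly what is needed for $\ell$ to dominate $f$ on both the plateaus and the intervening ramps.
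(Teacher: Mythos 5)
Your construction is essentially the paper's: the same alternating ramps of slope $1/j!$ and plateaus of height $p_j/j!$, with the $p_j$ chosen large enough via sublinearity of $f$, and the proposal is correct. The one genuine improvement is your verification of subadditivity through the observation that $\ell(x)/x$ is non-increasing, which is cleaner and more complete than the paper's direct case analysis.
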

\begin{proof}
We will define $p_{k}$ and $\ell$ by induction on $k$. First let $p_{1} = 1$ and let $\ell(1) = 1$. Assume we have defined $p_{k}$ and $\ell(n)$ for $n \leq kp_{k}$ in a way that satisfies properties 1-3. Let $p_{k+1}$ be the least real number such that for all $r \in \mathbb{R}$, if $r \geq (k+1)p_{k+1}$, then $f(r) \leq r/(k+1)!$. For $s \in \mathbb{R}$, if $kp_{k} < s \leq p_{k+1}$ define $\ell(s) = s/k!$. For $s \in \mathbb{R}$, $p_{k+1} \leq s \leq (k+1)p_{k+1}$, define $\ell(s) = p_{k+1}/k!$. By definition, $\ell((k+1)p_{k+1}) = p_{k+1}/k! = (k+1)p_{k+1}/(k+1)!$.

We will now show that $\ell$ satisfies properties 1-3. First, fix $r \in \mathbb{R}^{\geq 1}$, and let $k \in \mathbb{N}$ such that $kp_{k} \leq r \leq (k+1)p_{k+1}$. If $kp_{k} < r < p_{k+1}$, then $\ell(r) = r/k!$, and if $s < r$, then $\ell(s) \geq s/k!$. Thus, if $p + q = r$, then $\ell(p) + \ell(q) \geq p/k! + q/k! = r/k! = \ell(r)$. If $p_{k+1} < r \leq (k+1)p_{k+1}$, then $\ell(r) = \ell(p_{k+1})$, and property 1 follows immediately as $\ell$ is increasing. For $s \in \mathbb{R}$, if $kp_{k} \leq s \leq p_{k+1}$, then $\ell(s) = s/k! > f(s)$ by definition. If $p_{k+1} \leq s \leq (k+1)p_{k+1}$, then $\ell(s) =\ell((k+1)p_{k+1}) = (k+1)p_{k+1}/(k+1)! \geq f((k+1)p_{k+1}) \geq f(s)$, so $\ell$ satisfies property 2. It is clear that this definition of $\ell$ satisfies property 3. 
\end{proof}

We are now ready to relate the connectedness of $Cone^{\omega}_{G}(H)$ to the distortion of $H$ in $G$.

\begin{thm}
\begin{enumerate} If $H$ is a finitely generated subgroup of a finitely generated group $G$, then the following implications hold.
    \item If $\Delta^{G}_{H}(n)$ is linear, then $Cone^{\omega}_{G}(H)$ is connected for all ultrafilters $\omega$.
    \item If $Cone^{\omega}_{G}(H)$ is connected for all ultrafilters $\omega$, then $\Delta^{G}_{H}(n) \preceq f$ for some polynomial $f$.
    \item For every increasing, superlinear function $\phi \colon \mathbb{N} \rightarrow \mathbb{N}$ there exists a group $G$ with a subgroup $H$ such that $Cone^{\omega}_{G}(H)$ is disconnected for some ultrafilter $\omega$, but $\Delta^{G}_{H}(n) \preceq \phi$.
    \item For all $k \in \mathbb{N}$, there exists a group $G$ with a subgroup $H$ such that $Cone^{\omega}_{G}(H)$ is connected for all ultrafilters $\omega$, and $\Delta^{G}_{H} \sim n^{k}$.
\end{enumerate}
\end{thm}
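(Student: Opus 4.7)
The four claims reduce to controlling the generalized distortion $\mu^G_H$ via Theorem 4.13, but split along different lines, so I would handle them in turn.

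For (1), if $\Delta^G_H(n)$ is linear, then $|h|_Y \sim |h|_X$ on $H$, so the identity $(H, d_Y) \to (H, d_X|_H)$ is a bi-Lipschitz (hence quasi-isometric) equivalence of proper metric spaces. Lemma 4.5 transports $\nu_{(H, d_Y)}(m,n) \cong n/m$ (immediate, as $(H, d_Y)$ is $1$-connected and "geodesic") to $\nu_{(H, d_X|_H)} \cong n/m$; restricting to integer arguments gives $\mu^G_H \cong n/m$, which is homogeneous, so Theorem 4.13 yields connectedness of $Cone^\omega_G(H)$ for every $\omega$. For (2), Theorem 4.13 gives $K$ with $\mu^G_H(i, 4i) \leq K$. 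I would then telescope: any $h$ with $|h|_X \leq 4^j$ is a product of $\leq K$ elements of $X$-length $\leq 4^{j-1}$, each in turn a product of $\leq K$ elements of $X$-length $\leq 4^{j-2}$, and so on, so after $j$ iterations $h$ is a product of $\leq K^j$ elements of the finite set $Y_1 := \{y \in H : |y|_X \leq 1\}$. Each element of $Y_1$ has $Y$-length at most $C_0 := \Delta^G_H(1)$, so $|h|_Y \leq C_0 K^j$; taking $j = \lceil \log_4 n \rceil$ yields $\Delta^G_H(n) \preceq n^{\log_4 K}$, a polynomial.

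For (3), I would apply Olshanskii's Theorem 4.16 to a carefully chosen length function on $\mathbb{Z}$. Set $f(r) = \max(\phi^{-1}(r), \log_2 r)$, which is sublinear and dominated by $r$ for $r \geq 1$; by Lemma 4.18 there is a subadditive $\ell \geq f$ that is constant on long intervals $[p_k, k p_k]$. View $\ell$ as a length function on $H = \mathbb{Z} = \langle a \rangle$ via $\ell(a^n) = \ell(|n|)$; the four hypotheses of Theorem 4.16 hold (the $\log_2$ term secures condition (4), exponential growth), giving an embedding $H \hookrightarrow G = \langle X \rangle$ with $|a^n|_X \sim \ell(|n|)$. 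Then $\Delta^G_H(n) \sim \ell^{-1}(n) \leq \phi(n)$, so $\Delta^G_H \preceq \phi$. For disconnectedness, the flat interval at stage $k$ provides $m_k$ with $\ell^{-1}(2 m_k)/\ell^{-1}(m_k) \geq k$; after absorbing Olshanskii's bi-Lipschitz constants this yields that $\mu^G_H(i, 4i)$ is unbounded, so by Theorem 4.13 some $Cone^\omega_G(H)$ is disconnected.

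For (4), take $G = U_{k+1}(\mathbb{Z})$, the group of unipotent upper-triangular $(k+1) \times (k+1)$ integer matrices with the standard super-diagonal generators, and $H = \langle E \rangle$ for $E = I + e_{1, k+1}$, which spans the center. Iterating the commutator computation of Example 3.3 expresses $E^{n^k}$ as a $k$-fold nested commutator of $n$-th powers of generators, giving $\Delta^G_H(n) \sim n^k$; the balanced-base expansion of Example 3.2, applied layer by layer, gives $\nabla^G_H(n) \sim n^k$ as well. Thus $H$ is uniformly distorted, Corollary 3.7 yields $\mu^G_H(m, n) \cong (n/m)^k$ (homogeneous), and Theorem 4.13 gives connectedness. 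The main obstacle I foresee is in (3): one must simultaneously keep $\ell$ large enough for Olshanskii's exponential-growth hypothesis, small enough for $\Delta^G_H \preceq \phi$, and dramatic enough on some flat interval to drive $\mu^G_H(i, 4i)$ past any bound. Lemma 4.18 is engineered exactly for this trade-off, but tracking the flat region through Olshanskii's bi-Lipschitz constants to obtain the $\mu^G_H$ blow-up is where the verification is most delicate.
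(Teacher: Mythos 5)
Your proposal is correct overall, but it diverges from the paper's proof in two of the four parts, so a comparison is worth recording. For claims (2) and (3) you follow essentially the paper's route: the same telescoping of a bound $\mu^{G}_{H}(i,ci)\leq K$ into $\Delta^{G}_{H}(n)\preceq n^{\log_{c}K}$, and the same combination of Lemma 4.18 with Olshanskii's embedding theorem for (3). Your one genuine improvement there is taking $f=\max(\phi^{-1},\log_{2})$, which secures Olshanskii's growth condition $|\{h:\ell(h)\leq n\}|\leq a^{n}$ uniformly, whereas the paper splits into the subexponential and superexponential cases for $\phi$ and handles the latter by a separate appeal to claim (2). For claim (1) the paper is more elementary than you: it observes that when distortion is linear the identity induces a continuous surjection from $Cone^{\omega}(H)$ (a geodesic, hence connected, space) onto $Cone^{\omega}_{G}(H)$; your route through Lemma 4.5 and the equivalence theorem is valid, since linear distortion makes $(H,d_{Y})\to(H,d_{X}|_{H})$ a quasi-isometry of proper $r$-connected spaces and $\nu_{(H,d_{Y})}$ is bounded by a homogeneous function, but it invokes heavier machinery where a one-line topological argument suffices. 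For claim (4) you take a genuinely different path: the concrete nilpotent group $U_{k+1}(\mathbb{Z})$ with its center, rather than the paper's Olshanskii embedding of $\mathbb{Z}$ with prescribed length function $\ell(z)=\left\lceil |z|^{1/k}\right\rceil$. Your construction is more concrete and extends the paper's own Heisenberg example, but it shifts the burden onto the two-sided estimate $|E^{m}|_{X}\asymp|m|^{1/k}$ (equivalently $\nabla^{G}_{H}\sim\Delta^{G}_{H}\sim n^{k}$), whose lower-distortion half for $k>2$ requires a genuine multi-layer base-$n$ expansion that you only gesture at; the paper's route gets uniform distortion for free because Olshanskii's theorem lets one prescribe the word length up to a multiplicative constant. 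Neither difference is a gap---both of your alternatives can be completed---but the $U_{k+1}(\mathbb{Z})$ computation is the one place where your sketch leaves real work undone.
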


\begin{proof}
We will begin by proving claim 1.

If $H$ is a subgroup of $G$, then we can define a continuous function $\rho$ from $Cone^{\omega}(H)$ to $Cone^{\omega}_{G}(H)$ by $\rho((h_{i})^{\omega}) = (h_{i})^{\omega}$. For all $h \in H$, $|h|_{X} \leq C|h|_{Y}$ for some fixed constant $C$, so $\rho$ is well-defined. Assume $(h_{i})^{\omega} \in Cone^{\omega}_{G}(H)$. This means that there exists  $B$ such that for all $i \in \mathbb{N}$, $|h_{i}|_{X}/i \leq B.$ Distortion is linear means that there exists $D$ such that $ \frac{|h_{i}|_{Y}}{i} \leq D\frac{|h_{i}|_{X}}{i} \leq DB.$ Thus, $\rho$ is surjective, and $Cone^{\omega}_{G}(H)$ is connected, as $Cone^{\omega}_{G}(H)$ is connected.

Now we prove the second claim in Theorem 4.17.

Assume that $Cone^{\omega}_{G}(H)$ is connected in $Cone^{\omega}(G)$, and hence that $\mu^{G}_{H}(i,2i)$ is bounded by some constant $K$ for all $i$. By induction we have that $\Delta^{G}_{H}(2^{n}) = \mu_{H}^{G}(1,2^{n}) \leq K^{n}$ for all $n \in \mathbb{N}$. 

Now let $n \in \mathbb{N}$, and let $m \in \mathbb{R}$ such that $2^{m-1} \leq n < 2^{m}$. We have that $$\Delta^{G}_{H}(n) \leq \Delta^{G}_{H}(2^{m}) \leq K^{m} =(2^{m})^{\log_{2}K} \leq (2n)^{\log_{2}K}.$$ Thus, $\Delta^{G}_{H}(n) \preceq n^{\log_{2}K}.$

We will now prove the third claim of the theorem. Let $\phi$ be a superlinear, increasing function $\mathbb{N} \rightarrow \mathbb{N}$. $\phi$ can be extended to an invertible, increasing, superlinear function from $\mathbb{R}^{\geq 1}$ to $\mathbb{R}$. We can now apply Lemma 4.17 to $\phi^{-1}$ to get a function $\ell$ which is always larger than $\phi^{-1}$. We can then restrict $\ell$ to the natural numbers and take ceilings to get a function from $\mathbb{N}$ to $\mathbb{N}$. We can extend this to a function from $\mathbb{Z}$ to $\mathbb{Z}$ by defining $\ell(0) = 0$ and $\ell(-z) = \ell(z)$ for $z < 0$. As $\ell \geq \phi^{-1}$, we have that $\phi(\ell(n)) \geq n$.
If $\phi$ is subexponential, then this $\ell$ now satisfies all of the conditions of Theorem 4.16, and hence there exists a group $G = \langle X \rangle$, a constant $C$ and an embedding $\psi \colon \mathbb{Z} \rightarrow G$ such that $$ \frac{\ell(n)}{C} \leq |\psi(n)|_{X} \leq C\ell(n).$$ Now note that if $|\psi(n)|_{X} \leq m$, then $\ell(n) \leq C|\psi(n)|_{X} \leq Cm$, and so $n < \phi(\ell(n)) \leq \phi(Cm).$ Hence, distortion is bounded by $\phi$. On the other hand, $\ell(p_{k})=\ell(p_{k}+1)=\dots=\ell(kp_{k})$ implies that $C|\psi(q)|_{X} > \ell(p_{k})$ for all $p_{k} 
\leq q \leq kp_{k}$ while $|\psi(kp_{k})|_{X} \leq C\ell(p_{k})$, and so $\mu^{G}_{H}\left(\ell(p_{k})/C,C\ell(p_{k})\right) \geq k.$ By Theorem 4.15, $Cone^{\omega}_{G}(H)$ is disconnected for some ultrafilter $\omega$. 

Note that if $\phi$ is superexponential, then claim 2 of Theorem 4.19 shows that $Cone^{\omega}_{G}(H)$ is not connected for all ultrafilters $\omega$. 

Part 4 of the theorem can also be proven using this method.

Fix $k \in \mathbb{N}$, and for $z \in \mathbb{Z}$ let $\ell(z) = \left \lceil{|z|}^{\frac{1}{k}}\right \rceil$. Let $G$ be a group with finite generating set $X$ and $\psi$ an embedding of $\mathbb{Z}$ into $G$ such that $$\frac{\ell(z)}{C} \leq |\psi(z)|_{X} \leq C\ell(z).$$ Note that if $|\psi(z)|_{X} \leq m$, then $|z|^{1/k} \leq \left \lceil{|z|}^{1/k}\right \rceil = \ell(z) \leq C|\psi(z)|_{X} \leq Cm,$  which implies that $|z| \leq C^{k}m^{k}.$  Thus $\Delta^{G}_{H}(m) \preceq m^{k}$. Now note that $\ell(m^{k}) = m$, so $|\psi(m^{k})|_{X} \leq Cm$, which implies $\Delta^{G}_{H}(Cm) \geq m^{k}.$ Thus, $\Delta^{G}_{H}(m) \sim m^{k}.$ The above calculations show  that if $|\psi(z)|_{X} \leq 4i$, then $|z| \leq 4^{k}C^{K}i^{k}$. Further, if $|z| \leq (i/C)^{K}$ then $|\psi(z)|_{X} \leq C\ell(z) \leq i.$ Thus, $\mu^{G}_{H}(i,4i) \leq 4^{k}C^{2k}$, and so by Theorem 4.20 we have that $Cone^{\omega}_{G}(H)$ is connected.
\end{proof}

\section{Convexity in asymptotic cones}

\begin{defn}
A subspace $T$ of a metric space $S$ is called $\textit{Morse}$ if for all constants $\lambda, C$ there exists a constant $M$ such that any $(\lambda,C)$-quasi-geodesic connecting points in $T$ is contained in the $M$ neighborhood of $T$.
\end{defn}

\begin{defn}
We say a subset $T$ of a metric space $S$ is $\textit{strongly}$ $\textit{convex}$ if every simple path starting and ending in $T$ is entirely contained in $T$.
\end{defn}

\begin{thm}
Let $T$ be a closed subspace of a geodesic metric space $S$. Assume that $Cone^{\omega}_{S}(T)$ is strongly convex in $Cone^{\omega}(S)$ for all ultrafilters $\omega$ and for any two points $t_{1},t_{2}$ in $Cone^{\omega}_{S}(T)$ there exists an isometry $\phi$ of $Cone^{\omega}(S)$ fixing $Cone^{\omega}_{S}(T)$ such that $\phi(t_{1}) = t_{2}$. Then $T$ is Morse.
\end{thm}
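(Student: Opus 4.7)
The plan is to prove the contrapositive: assuming $T$ is not Morse, I will produce a non-principal ultrafilter $\omega$ and a simple path in $Cone^\omega(S)$ whose endpoints lie in $Cone^\omega_S(T)$ but whose image is not contained in $Cone^\omega_S(T)$, contradicting strong convexity. Fix $\lambda \geq 1, C \geq 0$ witnessing the failure of the Morse condition. For each $i \in \mathbb{N}$, choose a $(\lambda, C)$-quasi-geodesic $p_i\colon [0, \ell_i] \to S$ with $p_i(0), p_i(\ell_i) \in T$ containing a point $q_i = p_i(s_i^*)$ with $d(q_i, T) \geq i$. Since $d(p_i(\cdot), T)$ is continuous on the compact interval $[0, \ell_i]$, I may take $q_i$ to realize $r_i := \max_s d(p_i(s), T)$, so $r_i \geq i$ and no point of $p_i$ lies farther than $r_i$ from $T$. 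After passing to a subsequence, assume $(r_i)$ is strictly increasing.

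The main difficulty is that $\ell_i$ may be arbitrarily large compared to $r_i$, so $p_i$ itself does not fit as a finite-length path in the cone at scale $r_i$. I therefore truncate and cap with geodesics. Let $s_i^- = \max(0, s_i^* - r_i)$ and $s_i^+ = \min(\ell_i, s_i^* + r_i)$, and pick $t_i^\pm \in T$ with $d(t_i^\pm, p_i(s_i^\pm)) \leq r_i + 1$ (possible since $d(p_i(s_i^\pm), T) \leq r_i$); when $s_i^\pm \in \{0, \ell_i\}$, take $t_i^\pm = p_i(s_i^\pm)$ directly. Using that $S$ is geodesic, splice a continuous path $\gamma_i$ in $S$: geodesic from $t_i^-$ to $p_i(s_i^-)$, then $p_i|_{[s_i^-, s_i^+]}$, then geodesic from $p_i(s_i^+)$ to $t_i^+$. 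With a suitable parametrization, $\gamma_i$ has total length bounded by a linear multiple of $r_i$, and the family $\{\gamma_i\}$ is uniformly Lipschitz at scale $r_i$.

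Using scaling $(r_i)$ and observation sequence $(q_i)$, take the $\omega$-limit $\gamma$ of the $\gamma_i$. This is a continuous Lipschitz path in $Cone^\omega(S)$ starting at $(t_i^-)^\omega \in Cone^\omega_S(T)$ and ending at $(t_i^+)^\omega \in Cone^\omega_S(T)$, passing through $(q_i)^\omega$ at some interior parameter $t^*$. The key numerical point is $d^\omega((q_i)^\omega, Cone^\omega_S(T)) = \lim^\omega r_i/r_i = 1$: any $(u_i)^\omega \in Cone^\omega_S(T)$ with $u_i \in T$ satisfies $d(q_i, u_i) \geq r_i$, and near-minimizers exist by the definition of $r_i$. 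Since $Cone^\omega_S(T)$ is closed (Lemma 4.11), $(q_i)^\omega$ is at strictly positive distance from it. Set $t_0 = \sup\{t \leq t^* : \gamma(t) \in Cone^\omega_S(T)\}$ and $t_1 = \inf\{t \geq t^* : \gamma(t) \in Cone^\omega_S(T)\}$. By continuity of $\gamma$ and closedness of $Cone^\omega_S(T)$, $\gamma(t_0), \gamma(t_1) \in Cone^\omega_S(T)$ while $\gamma((t_0, t_1))$ lies in the complement of $Cone^\omega_S(T)$. Applying the classical Hausdorff arc theorem to $\gamma|_{[t_0, t_1]}$ yields a simple arc $\sigma$ from $\gamma(t_0)$ to $\gamma(t_1)$ with image in $\gamma([t_0, t_1])$; simplicity forces the interior of $\sigma$ into $\gamma([t_0, t_1]) \setminus \{\gamma(t_0), \gamma(t_1)\} \subset \gamma((t_0, t_1))$, so the interior avoids $Cone^\omega_S(T)$. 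This simple path violates strong convexity.

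The transitivity hypothesis enters as a technical step: my construction uses observation sequence $(q_i)$ and scaling $(r_i)$, which may not match the canonical cone to which the strong convexity assumption refers, and composing with an isometry of $Cone^\omega(S)$ fixing $Cone^\omega_S(T)$ sends $(t_i^-)^\omega$ to a preferred basepoint, aligning the two cones. The main obstacle is the choice of scaling — one must use the excursion depth $r_i$ rather than $\ell_i$ or the canonical scaling $i$ to keep $(q_i)^\omega$ at positive distance from $Cone^\omega_S(T)$ — together with the geodesic capping, which is what guarantees the limit path lands in $Cone^\omega_S(T)$ at its endpoints even when $\ell_i \gg r_i$.
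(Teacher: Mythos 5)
Your overall strategy is the same as the paper's: negate the Morse condition, truncate the offending quasi-geodesics around the point of deepest excursion, cap the truncation with geodesics back into $T$, rescale by the excursion depth $r_{i}$, and pass to the $\omega$-limit to obtain a path between two points of $Cone^{\omega}_{S}(T)$ that leaves $Cone^{\omega}_{S}(T)$. Your replacement of the paper's explicit surgery on the three limit paths (Lemmas 5.4--5.5) by the Hausdorff arc theorem is an attractive shortcut, but it is exactly where a gap opens. The arc theorem produces a nondegenerate simple arc from $\gamma(t_{0})$ to $\gamma(t_{1})$ only if $\gamma(t_{0})\neq\gamma(t_{1})$, and nothing in your construction rules out $\gamma(t_{0})=\gamma(t_{1})$. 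If the two re-entry points coincide, the excursion through $(q_{i})^{\omega}$ is a loop based at a single point of $Cone^{\omega}_{S}(T)$; the extracted arc degenerates, and a simple arc from that basepoint out to $(q_{i})^{\omega}$ starts but does not end in $Cone^{\omega}_{S}(T)$, so it does not contradict strong convexity as defined.

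This degeneracy is a live possibility with your parameters. Your window has half-width $r_{i}$, so in the cone each capping geodesic has length at most $1$, anchored at $B=(p_{i}(s_{i}^{-}))^{\omega}$ and $C=(p_{i}(s_{i}^{+}))^{\omega}$ with only $d^{\omega}(B,C)\geq 2/\lambda$; for $\lambda>1$ the two caps (and likewise the left cap and the far half of the middle segment) can meet, and the common point can lie in $Cone^{\omega}_{S}(T)$, forcing $t_{0}$ and $t_{1}$ onto the same point. This is precisely what the paper's proof is engineered to prevent: it takes half-width $3\lambda d_{i}$ and proves the separation estimate of Lemma 5.4, which keeps the left cap at distance at least $d_{i}$ from everything at parameter at least $s_{i}$ (and symmetrically), so the two re-entry points cannot coincide. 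Your argument is repaired by the same device: enlarge the window to $3\lambda r_{i}$, prove the separation estimate, and note that if $t_{0},t_{1}$ both lie on the middle segment then injectivity of its bi-Lipschitz limit already gives $\gamma(t_{0})\neq\gamma(t_{1})$. The remaining steps --- the computation $d^{\omega}((q_{i})^{\omega},Cone^{\omega}_{S}(T))=1$, the use of closedness of $Cone^{\omega}_{S}(T)$ to define $t_{0},t_{1}$, and the use of the isometry hypothesis to normalize the observation point --- are correct and match the paper.
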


\begin{proof}
Assume $T$ is not Morse. This means that there exist constants $\lambda \geq 1, C \geq 0$ such that for all $i \in \mathbb{N}$ there exists a $(\lambda,C)$-quasi-geodesic $p_{i}\colon[0,k_{i}] \rightarrow S$ parameterized by length, and $s_{i} \in [0,k_{i}]$ with $p_{i}(0)$ and $p_{i}(k_{i})$ in $T$ and $d_{S}(p_{i}(s_{i}),T) \geq i$. For all $i$ let \begin{equation} \tag{2} \label{eqn:a} d_{i} = \sup\{d_{S}(p_{i}(s),T)\mid s \in [0,k_{i}]\}. 
\end{equation} 
We can choose our paths $p_{i}$ to make the sequence $(d_{i})$ increasing with all $d_{i} > C$. For each $i$, let $s_{i}$ be a point in $[0,k_{i}]$ such that $d_{S}(p_{i}(s_{i}),T) = d_{i}$ (such a point exists as paths are compact). Let $s_{i}^{\ell} = \textrm{max}\{s_{i}-3\lambda d_{i}, 0\},$ and similarly let $s_{i}^{r} = \textrm{min}\{s_{i} + 3 \lambda d_{i}, k_{i} \}.$
\begin{figure}[h]
\centering
\includegraphics[scale=.7]{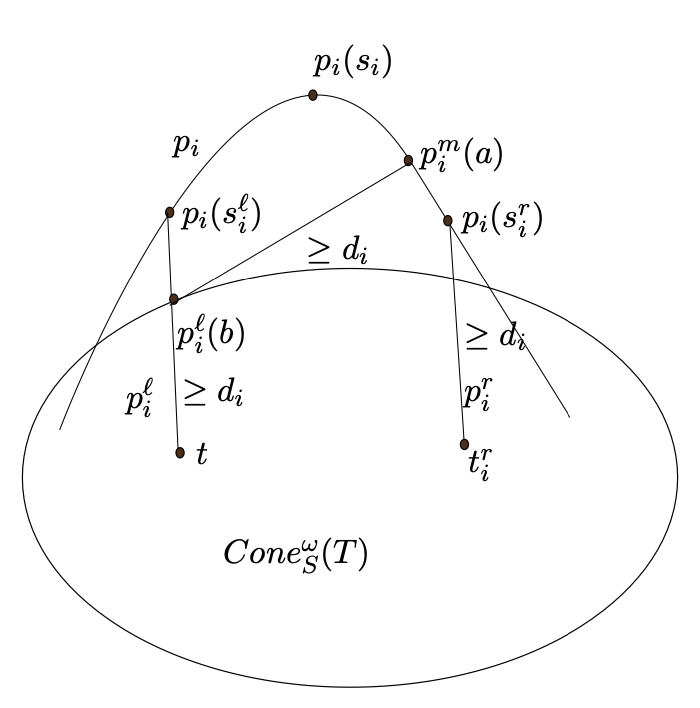}
\caption{Theorem 5.3}
\end{figure}
By \eqref{eqn:a} $d_{S}(p_{i}(s_{i}^{\ell}),T)$ and $d_{S}(p_{i}(s_{i}^{r}),T)$ are less than or equal to $d_{i}$. Let $d_{S}(p_{i}(s_{i}^{\ell}),T) = k_{i}^{\ell},$ and $d_{S}(p_{i}(s_{i}^{r}),T) = k_{i}^{r}.$ Let $t_{i}^{\ell}$ be a point in $T$ such that $d_{S}(p_{i}(s_{i}^{\ell}),t_{i}^{\ell}) = k_{i}^{\ell},$ and let $p_{i}^{\ell}\colon[0,k_{i}^{\ell}]\rightarrow \Gamma(G)$ be a geodesic from $t_{i}^{\ell}$ to $s_{i}^{\ell}$. Note that by assumption we can take $t_{i}^{\ell} = t$ where $t$ is some fixed point in $T$ by taking an isometry fixing $T$ sending $t_{i}^{\ell}$ to $t$. Similarly, let $p_{i}^{r}\colon [0,k_{i}^{r}]$ be a geodesic from $s_{i}^{r}$ to a point $t_{i}^{r} \in T$ such that $d_{S}(t_{i}^{r}, p_{i}(s_{i}^{r})) = k_{i}^{r}$. Denote by $p_{i}^{m}\colon [s_{i}^{\ell},s_{i}^{r}] \rightarrow S$ the segment of $p_{i}$ from $p_{i}(s_{i}^{\ell})$ to $p_{i}(s_{i}^{r})$. 

We will need the following lemma.
\begin{lem} \begin{enumerate}
\item For all $i \in \mathbb{N}$, if $s_{i}^{\ell} \neq 0$, $a \in [s_{i},s_{i}^{r}]$, and $b \in [0,k_{i}^{\ell}]$, then \newline $d_{S}(p_{i}^{m}(a),p_{i}^{\ell}(b)) \geq d_{i}$. 
\item For all $i \in 
\mathbb{N}$, if $s_{i}^{r} \neq k_{i}$, $a \in [s_{i}^{\ell},s_{i}]$, and $b \in [0,k_{i}^{r}]$, then  $d_{S}(p_{i}^{m}(a),p_{i}^{r}(b)) \geq d_{i}$.
\end{enumerate}
\end{lem}
\begin{proof} First, if $s_{i}^{\ell} \neq 0$, then $s_{i}^{\ell} = s_{i}-3\lambda d_{i}$. Now note that $$d_{S}(p_{i}^{m}(a),p_{i}^{m}(s_{i}^{\ell})) \geq \frac{3\lambda d_{i}}{\lambda}-C = 3d_{i}-C > 3d_{i} - d_{i} = 2d_{i},$$ as $p_{i}$ is a $(\lambda,C)$ geodesic, and we assumed that $d_{i} > C$. Thus, as $d_{S}(p_{i}^{\ell}(b),p_{i}^{m}(x_{i}^{\ell})) \leq d_{i}$, $d_{S}(p_{i}^{m}(a),p_{i}^{\ell}(b)) \geq d_{i}$. The second claim follows similarly.
\end{proof}

We return to the proof of Theorem 5.3.

Fix an ultrafilter $\omega$, and consider the asymptotic cone of $S$ with respect to $\omega$ and the scaling sequence $d_{i}$. By construction, $d_{S}(t,p_{i}^{\ell}(k_{i}^{\ell})) \leq d_{i},$ and so $(p_{i}^{\ell}(k_{i}^{\ell}))^{\omega} \in Cone^{\omega}(G).$ As $|s_{i}^{\ell}-s_{i}^{r}| \leq 6\lambda d_{i},$ we have that $d_{S}(p_{i}(s_{i}^{\ell}),p_{i}(s_{i}^{r})) \leq 6\lambda^{2}d_{i}+C,$. and so as $(p_{i}(s_{i}^{\ell}))^{\omega} \in Cone^{\omega}(G)$, we have that $(p_{i}(s_{i}^{r}))^{\omega} \in Cone^{\omega}(G)$. As $d_{S}(p_{i}(s_{i}^{r}),p_{i}^{r}(k_{i}^{r})) = d(p_{i}^{r}(0),p_{i}^{r}(k_{i}^{r})) \leq d_{i},$ we have that $(p_{i}^{r}(k_{i}^{r}))^{\omega} \in Cone^{\omega}(G)$. Thus we can define $$k^{\ell} = \textrm{lim}^{\omega} \frac{k^{\ell}_{i}}{d_{i}}, s^{\ell} = \textrm{lim}^{\omega} \frac{s^{\ell}_{i}}{d_{i}}, s^{r} = \textrm{lim}^{\omega} \frac{s_{i}^{r}}{d_{i}}, k^{r} = \textrm{lim}^{\omega}\frac{k_{i}^{r}}{d_{i}},$$ and we can define $p^{\ell}\colon[0,k^{\ell}] \rightarrow Cone^{\omega}(S)$ as $\lim^{\omega}(p_{i}^{\ell})$, $p^{m} \colon [s^{\ell},s^{r}] \rightarrow Cone^{\omega}(S)$ as $\lim^{\omega}(p_{i}^{m})$, and $p^{r} \colon [0,k^{r}]$ as $\lim^{\omega}(p_{i}^{r})$. We have that $p^{\ell}$ and $p^{r}$ are geodesics, and $p^{m}$ is a $(\lambda,0)$ quasi-geodesic, and hence all are simple. 

Now we have three simple paths, $p^{\ell},p^{m},p^{r}$, such that $p^{\ell}(0)$ and $p^{r}(k^{r})$ are in $Cone^{\omega}_{S}(T)$, and $p^{\ell}$ and $p^{r}$ both intersect $p^{m}$. Unfortunately, the concatenation of these three paths may not be simple, as $p^{\ell}$ and $p^{r}$ could intersect $p^{m}$ in more than once. To deal with this case, we need the following lemma.

\begin{lem} 
\begin{enumerate} Let $s = \textrm{lim}^{\omega} s_{i}/d_{i}.$
\item If $a \in [0,k^{\ell}]$,and $b \in [s^{\ell},s^{r}]$, with $p^{\ell}(a) = p^{m}(b)$, then $b \leq s$.
\item if $a \in [0,k^{r}]$, and $b \in [s^{\ell},s^{r}]$, with $p^{r}(a) = p^{m}(b),$ then $b \geq s$.
\end{enumerate}
\end{lem}
\begin{proof}
Note that if $\{i|k_{i}^{\ell} = 0\} \in \omega,$ then $p^{l}$ is a trivial path, and the result is clear. Otherwise, we have that $\{i| k_{i}^{\ell} \neq 0\} \in \omega.$ In this case we can use Lemma 5.4 to say that if $(b_{i})^{\omega}$ is on $p^{\ell}$ and $(a_{i})^{\omega}$ is on $p^{m}$ after $s$, then $d_{S}^{\omega}((b_{i})^{\omega},(a_{i})^{\omega}) \geq \textrm{lim}^{\omega}\frac{d_{i}}{d_{i}} \geq 1.$ The proof of claim 2 follows similarly.
\end{proof}

Thus, we can form a simple path which starts and ends in $Cone^{\omega}_{S}(T)$ as follows. Let $$p = \textrm{max}\{t \in [s^{\ell},s^{r}]\mid \exists a \in [0,k^{\ell}] \, \, p^{\ell}(a) = p^{m}(t)\},$$ and let $$q = \textrm{min}\{t \in [s^{\ell},s^{r}]\mid \exists a \in [0,k^{r}]\, \, p^{r}(a) = p^{m}(t)\}.$$ We obtain a simple path by following $p^{\ell}$ up to $p^{m}(p)$, then following $p^{m}$ up to $p^{m}(q)$, and finally following $p^{r}$ back to $p^{r}(k^{r})$. This path contains $p^{m}(s)$ by Lemma 5.5. Finally, as $p^{m}(s) = (p_{i}^{m}(s_{i}))^{\omega},$ $$d_{S}^{\omega}(p^{m}(s),Cone^{\omega}_{S}(T)) = \textrm{lim}^{\omega}\frac{d_{S}(p_{i}^{m}(s_{i}),Cone^{\omega}_{S}(T))}{d_{i}} = \textrm{lim}^{\omega}\frac{d_{i}}{d_{i}}=1.$$ 

Thus, we have a simple path starting and ending in $Cone^{\omega}_{S}(T)$, which is not entirely contained in $Cone^{\omega}_{S}(T)$.
\end{proof}

In order to prove a partial converse of this statement we will need the following results from Drutu, Mozes and Sapir \cite{Sapir}. Note that an error was found in this paper \cite{errata}, but none of the following lemmas were affected.

\begin{lem} (\cite{Sapir} Lemma 2.3) Let $S$ be a geodesic metric space, $\omega$ an ultrafilter, and $B$ a closed subset of $Cone^{\omega}(S)$. If $x,y$ are in the same connected component of $Cone^{\omega}(S)\setminus B$, then there exists a sequence of paths $(p_{i})_{i=1}^{n}$ such that each path is a limit geodesic in $X$, and the concatenation of the paths $p_{i}$ is a simple path from $x$ to $y$. \end{lem}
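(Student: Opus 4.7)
The plan is to first produce a path from $x$ to $y$ inside $Cone^{\omega}(S) \setminus B$, approximate it by a concatenation of limit geodesics, and then simplify the concatenation. Since the open set $Cone^{\omega}(S) \setminus B$ is locally path-connected (any ball in the cone is path-connected, being a geodesic space), its connected components are path components, so there is a continuous path $\gamma\colon [0,1] \to Cone^{\omega}(S)\setminus B$ from $x$ to $y$. Because $\gamma([0,1])$ is compact and $B$ is closed, $\varepsilon := d(\gamma([0,1]), B) > 0$. By uniform continuity, pick a partition $0 = t_0 < t_1 < \dots < t_n = 1$ with $d^{\omega}_S(\gamma(t_{i-1}), \gamma(t_i)) < \varepsilon/4$ for every $i$.

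For each $i$, choose a limit geodesic $q_i$ from $\gamma(t_{i-1})$ to $\gamma(t_i)$. Such geodesics exist in $Cone^{\omega}(S)$ because $S$ is geodesic: given $\gamma(t_{i-1}) = (u_j)^{\omega}$ and $\gamma(t_i) = (v_j)^{\omega}$, take geodesics in $S$ from $u_j$ to $v_j$ and form their $\omega$-limit. Each $q_i$ has length less than $\varepsilon/4$, so $q_i$ is contained in the $\varepsilon/4$-neighborhood of $\gamma(t_{i-1})$ and therefore avoids $B$. Concatenating, $p := q_1 \ast q_2 \ast \dots \ast q_n$ is a continuous path from $x$ to $y$ in $Cone^{\omega}(S) \setminus B$ that decomposes as finitely many limit geodesic pieces, though it need not be simple.

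The main obstacle is reducing this concatenation to a simple path while keeping the limit-geodesic structure. The tool is a loop-excision procedure: if $p(s) = p(t)$ for some $s < t$, take the minimal such $s$ and, for this $s$, the maximal such $t$; both exist because $p^{-1}(p(s))$ is a compact subset of $[0,L]$. Excise $p|_{(s,t)}$, gluing the parameter intervals at the common value $p(s)$. The resulting path still starts at $x$, ends at $y$, and avoids $B$. After finitely many such excisions one reaches a simple path, since each excision either removes an entire $q_i$ or shortens at most two of the $q_i$, and in the end at most two cuts are made on each original segment. Because a sub-arc of a limit geodesic is again a limit geodesic — if $q_i = \lim^{\omega}(g_{i,j})$ where $g_{i,j}\colon[0,\ell_{i,j}]\to S$ are geodesics, then the restriction of $q_i$ to $[a,b]$ equals $\lim^{\omega}$ of $g_{i,j}$ restricted to $[a \ell_{i,j}/L_i, b\ell_{i,j}/L_i]$, which is a genuine geodesic segment in $S$ — the simplified path is still a concatenation of finitely many limit geodesics, giving the desired sequence $(p_i)$.

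Two smaller technical points need to be verified along the way: that the loop-excision procedure really terminates after finitely many steps (which follows because only finitely many segments $q_i$ are involved and each contributes at most one sub-arc to the final path), and that when we excise a loop spanning several segments, the endpoints where we glue correspond to honest parameter values of the limit geodesics (which is automatic from the choice of maximal $t$ and minimal $s$). With these checks the lemma follows.
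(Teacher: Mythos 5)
The paper does not prove this lemma; it is imported verbatim from Drutu--Mozes--Sapir, so there is no internal proof to compare against. Your overall strategy --- use local path-connectedness of the cone to get a path $\gamma$ from $x$ to $y$ avoiding $B$, chain together short limit geodesics along a fine partition (each staying away from $B$ because $d(\gamma([0,1]),B)>0$), and then extract a simple arc from the concatenation --- is the standard argument and is sound. The observation that a subarc of a limit geodesic is again a limit geodesic is exactly what preserves the required structure through the surgery, and as a bonus your arc avoids $B$, which is what the application in Theorem 5.10 actually uses even though the statement as quoted does not demand it.

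The one step whose stated justification does not support the claim is the existence of the \emph{minimal} $s$ in the loop-excision procedure. Compactness of $p^{-1}(p(s))$ gives the maximal $t$ for a fixed $s$, but it is circular for minimizing over $s$: for a general continuous path the set $\{s : \exists\, t>s,\ p(s)=p(t)\}$ need not be closed (consider a path carrying a sequence of small loops whose basepoints accumulate at a point that is not itself a double point), so the minimum can genuinely fail to exist. Here it does exist, but for a reason you must supply: each $q_i$ is injective (it is a geodesic), so any double point has its two parameters in distinct segments $I_i$, $I_j$ with $i<j$; if $s_k\to\sigma$ from above and the corresponding $t_k$ also converge to $\sigma$, then $\sigma\in\overline{I_i}\cap\overline{I_j}$, which forces $j=i+1$ and $\sigma$ to be the breakpoint $\max I_i$, whence $s_k=\sigma$ for all $k$ and $\sigma$ is attained. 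Relatedly, your termination count is best phrased via the invariant that the index of the segment containing the right endpoint $t$ of each excised loop strictly increases (all double points surviving an excision lie strictly to the right of the previous $t$), which bounds the number of excisions by the number of segments. Alternatively, you can bypass the global loop erasure entirely by induction on $n$: having built a simple concatenation $\alpha$ from $x$ to $\gamma(t_{n-1})$, let $t^{*}$ be the last parameter at which $q_n$ meets the compact set $\alpha$ (a maximum of an honestly closed set) and follow $\alpha$ to that point, then $q_n$ from $t^{*}$ onward. With either repair the proof is complete.
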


\begin{defn}
A path is called $C$ bi-lipschitz if it is a $(C,0)$ quasi-geodesic.
\end{defn}

\begin{lem} (\cite{Sapir} Lemma 2.5)
In the same setting as Lemma 5.6, let $p$ be a simple path in $Cone^{\omega}(S)$ which is a concatenation of limit geodesics. For all $\delta$ there exists a constant $C$ and a $C$ bi-Lipschitz path $p'$ such that the Hausdorff distance between $p$ and $p'$ is less than $\delta$, and $p'$ is also a concatenation of limit geodesics connecting the same points.
\end{lem}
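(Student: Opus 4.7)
Parameterize $p \colon [0, L] \to Cone^{\omega}(S)$ by arc length; then $p$ is $1$-Lipschitz and each constituent limit geodesic is $1$-bi-Lipschitz on its subinterval, so any failure of $p$ to be globally $C$-bi-Lipschitz must come from parameter pairs $s < t$ lying in distinct pieces for which $d_{S}^{\omega}(p(s), p(t))$ is much smaller than $t - s$. Since $p$ is a continuous injection from the compact interval $[0, L]$, it is a homeomorphism onto its image, which yields a positive modulus of injectivity: for every $\epsilon > 0$ there is $\rho(\epsilon) > 0$ with $d_{S}^{\omega}(p(s), p(t)) \geq \rho(\epsilon)$ whenever $|s - t| \geq \epsilon$. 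This prevents long subsegments of $p$ from spatially collapsing and will be the main tool for quantitative control.

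The plan is an iterative shortcut construction. Choose a large constant $C$ depending on $\delta$. While there exist $s < t$ with $t - s > C \cdot d_{S}^{\omega}(p(s), p(t))$, fix such a pair and replace $p|_{[s,t]}$ by a limit geodesic from $p(s)$ to $p(t)$; such a geodesic exists because, given representatives $(x_{i})^{\omega}, (y_{i})^{\omega}$ of its endpoints, the $\omega$-limit of geodesics in $S$ from $x_{i}$ to $y_{i}$ is a limit geodesic in $Cone^{\omega}(S)$ joining them. The modified path is still a finite concatenation of limit geodesics (the untouched pieces outside $[s, t]$, possibly truncated at $s$ or $t$, glued to the new shortcut), its endpoints are unchanged, and its total length strictly decreases by at least $(1 - 1/C)(t - s)$. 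Upon termination the resulting $p'$ is $C$-bi-Lipschitz by construction and is a concatenation of limit geodesics joining the original endpoints.

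The main obstacles are to ensure the process (a) terminates, (b) stays within Hausdorff distance $\delta$ of $p$, and (c) leaves a well-defined simple limit path. For (a) I would first fix a large parameter-length threshold $\eta$ and process only pairs with $t - s \geq \eta$; at most $L/\eta$ such shortcuts occur before no large-scale bad pair remains, after which one recursively halves $\eta$, using $\rho$ to control the bi-Lipschitz constant at each scale. For (b) a shortcut over $[s,t]$ replaces a subpath contained in a ball of radius $t - s$ by a shorter subpath in the same ball, so Hausdorff displacement from that step is at most $t - s$; starting at a sufficiently small scale and summing geometrically over scales keeps the total displacement below $\delta$. Point (c) is the genuine technical hurdle: successive shortcuts could in principle introduce self-intersections that block further progress. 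I would address this by coupling each shortcut with a cleanup step that prunes any newly created loop via a further shortcut, which only further reduces the total length and remains within the class of concatenations of limit geodesics, so it is compatible with the termination and Hausdorff-control arguments above.
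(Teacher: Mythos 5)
The paper offers no proof of this statement: it is quoted from Drutu--Mozes--Sapir \cite{Sapir} (their Lemma 2.5), so there is no in-paper argument to compare yours against; you are in effect re-proving a cited result. Judged on its own, your greedy-shortcutting plan picks the right tools (limit geodesics between arbitrary points of the cone, and the modulus of injectivity $\rho$ coming from compactness plus simplicity), but it has a genuine gap at its core: the assertion that $\rho$ ``controls the bi-Lipschitz constant at each scale.'' The modulus of injectivity only gives $d_{S}^{\omega}(p(s),p(t)) \geq \rho(|t-s|)$, and $\rho(\epsilon)/\epsilon$ may tend to $0$. The entire difficulty of the lemma lives at small scales near the concatenation points, where two geodesic pieces can meet at ``zero angle,'' i.e.\ $d_{S}^{\omega}(p(s_{0}-t),p(s_{0}+t)) = o(t)$ while the path remains simple; there $\rho$ gives no linear lower bound, so your halving-of-$\eta$ loop need not stabilize. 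As described it runs through infinitely many scales, each stage can insert new geodesic pieces and hence new bad joints, and the limiting object is never shown to be a (finite) concatenation of limit geodesics --- which is exactly the structure the conclusion must deliver so that Lemma 5.9 can be applied to $p'$ afterwards. (In fact the large-scale stage of your loop is vacuous: once $C > L/\rho(\delta)$, any pair with $t-s \geq \delta$ satisfies $d_{S}^{\omega}(p(s),p(t)) \geq \rho(\delta) > L/C \geq (t-s)/C$, so every $C$-bad pair already has $t-s<\delta$; the problem is entirely at small scale, where your argument is weakest.)

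Two further points. The Hausdorff bookkeeping does not close: a later shortcut is measured against the already-modified path, so displacements compound, and the total parameter length of the shortcuts performed at a given stage is bounded only by the total length removed, which is of order the length of $p$ rather than of order $\delta$; you would need to organize the shortcuts to be disjoint, or nested with geometrically decaying diameters, and you do not arrange this. And the simplicity repair is waved at: a geodesic chord from $p(s)$ to $p(t)$ can meet the remainder of the path in a complicated closed set, so ``pruning the loop'' requires a first/last-intersection-point argument of the kind used in the proof of Theorem 5.3, together with a verification that pruning does not destroy the Hausdorff and bi-Lipschitz estimates already obtained. As it stands the proposal is a plausible outline rather than a proof; if you want to include an argument here rather than rely on the citation, the place to concentrate effort is the behaviour of the path in arbitrarily small neighborhoods of the finitely many concatenation points.
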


\begin{lem} (\cite{Sapir} Lemma 2.6)
Let $p$ be a $C$-bi-Lipschitz path in $Cone^{\omega}(S)$ which is a concatenation of limit geodesics. There exists a constant $C'$ and a sequence of paths $(p_{n})$ in $S$ such that each $p_{n}$ is $C'$ bi-Lipschitz, and $lim^{\omega}(p_{n}) = p$.
\end{lem}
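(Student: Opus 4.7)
The plan is to build each $p_{i}$ by concatenating the approximating geodesics that define the limit-geodesic pieces of $p$, connecting consecutive pieces by short geodesic bridges in $S$. Write $p=q_{1}\cdot q_{2}\cdots q_{k}$ as a concatenation of limit geodesics with $q_{j}=\lim^{\omega}q_{j,i}$, where each $q_{j,i}\colon [0,\ell_{j,i}]\to S$ is a geodesic satisfying $\ell_{j,i}/c_{i}\to L_{j}$, the length of $q_{j}$. Since consecutive $q_{j}$'s share endpoints in $Cone^{\omega}(S)$, the quantities $\epsilon_{j,i}:=d(q_{j,i}(\ell_{j,i}),q_{j+1,i}(0))$ satisfy $\epsilon_{j,i}/c_{i}\to 0$. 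Pick geodesics $r_{j,i}$ in $S$ of length $\epsilon_{j,i}$ realizing these distances, and let $p_{i}$ be the concatenation $q_{1,i}\cdot r_{1,i}\cdot q_{2,i}\cdot r_{2,i}\cdots q_{k,i}$ parameterized by arc length on $[0,\ell_{i}]$, where $\ell_{i}=\sum_{j}\ell_{j,i}+\sum_{j}\epsilon_{j,i}$. Then $\ell_{i}/c_{i}\to L:=\sum_{j}L_{j}$ (the length of $p$), and a piecewise check using the definition of the $\omega$-limit of paths gives $\lim^{\omega}p_{i}=p$.

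The main obstacle is securing a bi-Lipschitz constant $C'$ that applies uniformly to every index $i$. Within each geodesic sub-segment of $p_{i}$ the path is $1$-bi-Lipschitz, so the difficulty lies in comparing points on different sub-segments in $S$ for an individual index $i$: the $C$-bi-Lipschitz hypothesis on $p$ a priori only supplies cone-distance estimates, each of which holds $\omega$-almost surely for one fixed pair of parameters. To convert this into a uniform $S$-level statement, I would refine the decomposition of $p$ into $N$ short limit-geodesic pieces, with $N$ chosen depending only on $C$, and use the $C$-bi-Lipschitz property of $p$ together with the definition of the cone metric to produce, for each pair of refined-piece endpoints, a lower bound on the corresponding $S$-distance between endpoints of $p_{i}$ that holds $\omega$-almost surely. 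Intersecting the $\binom{N}{2}$ resulting $\omega$-large sets yields an $\omega$-large index set $I$ on which all of these endpoint bounds hold simultaneously.

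For $i\in I$, an inequality of the form $d(p_{i}(s),p_{i}(t))\geq (t-s)/C'$ for arbitrary $s<t$ follows by triangle inequality between the nearest refined-piece endpoints bracketing $s$ and $t$, combined with $1$-bi-Lipschitz behavior inside each sub-segment and the fact that the bridges $r_{j,i}$ have length $o(c_{i})$. Since only the $\omega$-limit of $(p_{i})$ is required to equal $p$, the finitely many $p_{i}$ with $i\notin I$ can be replaced by any $C'$-bi-Lipschitz paths in $S$ without affecting the limit, completing the construction.
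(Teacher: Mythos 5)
The paper itself does not prove this lemma---it is imported from Drutu--Mozes--Sapir \cite{Sapir}---so I am judging your argument against what a correct proof must accomplish. Your construction of the $p_{i}$ (concatenate the approximating geodesics $q_{j,i}$ of the limit-geodesic pieces, bridged by geodesics of length $o(c_{i})$) does give $\lim^{\omega}(p_{i})=p$, and you correctly isolate the lower bi-Lipschitz bound as the only real issue. But the proposed fix does not close it. Refining $p$ into $N$ pieces and importing the $C$-bi-Lipschitz inequality at the $\binom{N}{2}$ pairs of endpoints controls $d(p_{i}(s),p_{i}(t))$ only up to an additive error: bracketing $s,t$ by piece endpoints $e_{a}\leq s$ and $e_{b+1}\geq t$ and using that each sub-segment is geodesic gives
$$d(p_{i}(s),p_{i}(t))\ \geq\ \frac{t-s}{C}-\Bigl(1-\frac{1}{C}\Bigr)\bigl((s-e_{a})+(e_{b+1}-t)\bigr)-o(c_{i}),$$
and the correction term is of the order of a sub-segment length in $S$, i.e.\ of order $\delta c_{i}$ where $\delta$ is the cone-length of a refined piece. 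This is vacuous precisely when $t-s$ is small compared to $\delta c_{i}$, i.e.\ for pairs straddling a concatenation point at small parameter separation, and no choice of $N$ (which fixes $\delta$ once and for all, independently of $i$) repairs this.

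The failure is not merely technical: the unmodified concatenation genuinely need not be uniformly bi-Lipschitz. Consecutive approximating geodesics $q_{j,i}$ and $q_{j+1,i}$ may backtrack along one another near the concatenation point for a length $m_{i}$ with $m_{i}\to\infty$ but $m_{i}=o(c_{i})$; this is invisible in the cone, so $p$ can still be $C$-bi-Lipschitz and $\lim^{\omega}(p_{i})=p$ still holds, yet it produces parameters $s<t$ with $t-s\approx 2m_{i}$ while $d(p_{i}(s),p_{i}(t))$ stays bounded, destroying any uniform constant $C'$ on every $\omega$-large set of indices. A correct proof must therefore \emph{modify} the paths near each concatenation point---for instance, trim the terminal segment of $q_{j,i}$ and the initial segment of $q_{j+1,i}$ up to the last/first parameters at which the two geodesics come too close relative to the parameter elapsed, re-bridge by a geodesic, and then verify both that the trimmed lengths are $o(c_{i})$ (so the $\omega$-limit is unchanged) and that the surgered path is uniformly bi-Lipschitz, using the $C$-bi-Lipschitz hypothesis on $p$ to control the surgery. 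That surgery is the essential content of the cited lemma and is absent from your argument; the limit construction and the reduction to the lower bound, which you do carry out, are the routine parts.
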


\begin{thm}
If $T$ is a Morse subspace of a metric space $S$, then $Cone^{\omega}_{S}(T)$ is strongly convex in $Cone^{\omega}(S)$.
\end{thm}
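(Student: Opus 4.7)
The plan is to argue by contradiction. Suppose $T$ is Morse but $Cone^{\omega}_{S}(T)$ is not strongly convex for some non-principal ultrafilter $\omega$, so there is a simple path $p\colon [0,L]\to Cone^{\omega}(S)$ whose endpoints lie in $Cone^{\omega}_{S}(T)$ and which passes through a point $z$ at positive distance from $Cone^{\omega}_{S}(T)$. Since $Cone^{\omega}_{S}(T)$ is closed by Lemma 4.11, I first restrict to a subpath $p|_{[a,b]}$ with $p(a), p(b) \in Cone^{\omega}_{S}(T)$, $p((a,b))$ entirely in the complement of $Cone^{\omega}_{S}(T)$, and $z = p(t_{0})$ for some $t_{0} \in (a,b)$. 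Injectivity of $p$ guarantees $p(a) \neq p(b)$, so $D := d^{\omega}_{S}(p(a), p(b)) > 0$.

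Next, I pick $a' \in (a, t_{0})$ and $b' \in (t_{0}, b)$, so $p(a')$ and $p(b')$ lie in the same connected component of $Cone^{\omega}(S) \setminus Cone^{\omega}_{S}(T)$. Lemma 5.6, applied with $B = Cone^{\omega}_{S}(T)$, produces a simple path $q$ from $p(a')$ to $p(b')$ in this complement that is a concatenation of limit geodesics. Since the image of $q$ is compact and contained in an open set disjoint from $Cone^{\omega}_{S}(T)$, the quantity $d_{q} := \min_{t} d^{\omega}_{S}(q(t), Cone^{\omega}_{S}(T))$ is strictly positive. Lemma 5.7, applied with tolerance $d_{q}/4$, then produces a $C$-bi-Lipschitz path $q'$, still a concatenation of limit geodesics between the same endpoints, whose image lies at distance at least $3d_{q}/4$ from $Cone^{\omega}_{S}(T)$.

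Lemma 5.8 produces a sequence of $C'$-bi-Lipschitz paths $q'_{n}\colon [0,\ell_{n}] \to S$ with $\textrm{lim}^{\omega}(q'_{n}) = q'$. Fixing an interior parameter $t^{*}$ and letting $w_{n}$ be the corresponding point of $q'_{n}$, a standard ultrafilter argument yields $d_{S}(w_{n},T) \geq (3d_{q}/8) c_{n}$ for $\omega$-almost all $n$: if $d_{S}(w_{n},T)/c_{n}$ did not tend to at least $3d_{q}/4$, then choosing $s_{n} \in T$ nearly attaining the distance to $w_{n}$ would give a sequence $(s_{n}) \in T^{\mathbb{N}}$ whose class in $Cone^{\omega}_{S}(T)$ was strictly closer than $3d_{q}/4$ to $q'(t^{*})$, contradicting the distance bound on $q'$. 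I then choose representatives $(u_{n}), (v_{n}) \in T^{\mathbb{N}}$ with $(u_{n})^{\omega} = p(a)$ and $(v_{n})^{\omega} = p(b)$, and define $r_{n}$ as the concatenation in $S$ of a geodesic from $u_{n}$ to $w_{n}$ with a geodesic from $w_{n}$ to $v_{n}$.

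The bi-Lipschitz control on $q'_{n}$ bounds the length of $r_{n}$ by a constant multiple of $c_{n}$, while $d_{S}(u_{n}, v_{n}) \geq D c_{n}/2$, so the ratio of total length to endpoint distance is bounded by a constant $\lambda_{0}$ independent of $n$. An elementary calculation shows that a concatenation of two geodesics whose total length is at most $\lambda_{0}$ times the distance between its endpoints is automatically a $(\lambda_{0},0)$-quasi-geodesic. Since $u_{n}, v_{n} \in T$, the Morse property supplies a uniform $M = M(\lambda_{0},0)$ with $r_{n}$ contained in the $M$-neighborhood of $T$, but $r_{n}$ passes through $w_{n}$ and $d_{S}(w_{n},T) \to \infty$, giving the contradiction. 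The main obstacle is the middle step: extracting from the abstract simple path $p$ a simple concatenation of limit geodesics that is uniformly bounded away from $Cone^{\omega}_{S}(T)$ and whose endpoints still represent points of $Cone^{\omega}_{S}(T)$; Lemmas 5.6--5.8 are tailored exactly for this once one observes that the interior points of $p|_{[a,b]}$ share a connected component of the complement.
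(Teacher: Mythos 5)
There is a genuine gap at the final step. Your plan up through the production of the lifted paths $q'_{n}$ is sound and parallels the paper, but then you discard the $q'_{n}$ (whose endpoints represent $p(a'),p(b')$, which are \emph{not} in $Cone^{\omega}_{S}(T)$, so Morse cannot be applied to them) and replace them by $r_{n}$, a concatenation of two geodesics $u_{n}\to w_{n}\to v_{n}$. The claim that ``a concatenation of two geodesics whose total length is at most $\lambda_{0}$ times the distance between its endpoints is automatically a $(\lambda_{0},0)$-quasi-geodesic'' is false in a general geodesic space. In a tree (e.g.\ the Cayley graph of a free group), take $u,v$ with $d(u,v)=10$ and let $w$ lie at the end of a branch of length $5$ attached to the midpoint of $[u,v]$: then $d(u,w)=d(w,v)=10$, the total length is $2\,d(u,v)$, yet the concatenation backtracks along a segment of length $5$, so points with $|s-t|$ as large as $10$ are mapped to the same point and the path is not a $(\lambda,0)$-quasi-geodesic for any $\lambda$. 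In general the concatenation is only a $(1,\,L-d(u_{n},v_{n}))$-quasi-geodesic, and here $L-d(u_{n},v_{n})$ can be of order $c_{n}$, so the Morse constant $M(\lambda,C)$ you would need grows with $n$ and no contradiction results. Note that if your claim were true, Lemmas 5.6--5.8 would be superfluous: one could connect $u_{n}$ to $v_{n}$ through \emph{any} far-away representative $w_{n}$ of a point of $p$ and be done; the entire point of that machinery is to manufacture genuinely uniform quasi-geodesics in $S$.

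The paper's proof avoids this by a different arrangement of the same lemmas: it first builds a single \emph{simple} concatenation of limit geodesics running from a point $x\in Cone^{\omega}_{S}(T)$ out to the excursion (via a limit geodesic $p^{\ell}$), along the middle path $p^{m}$ in the complement, and back into $Cone^{\omega}_{S}(T)$ (via $p^{r}$), taking care that the result is injective. Only then does it apply the bi-Lipschitz approximation and the lifting lemma to this whole path, so that the lifted paths $q_{n}$ are uniformly $C'$-bi-Lipschitz, i.e.\ uniform $(C',0)$-quasi-geodesics, with endpoints $\omega$-almost surely within $o(c_{n})$ of $T$ (hence adjustable to lie in $T$ with a negligible change of constants). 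Morse then applies to the $q_{n}$ themselves and forces their limit into $Cone^{\omega}_{S}(T)$, contradicting the existence of the far-away point $z'$. To repair your argument you would need to incorporate the connecting geodesics from $p(a)$ and $p(b)$ into the simple concatenation \emph{before} invoking the approximation and lifting lemmas, rather than grafting geodesics on afterwards in $S$.
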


\begin{figure}[h]
\centering
\includegraphics[scale=.6]{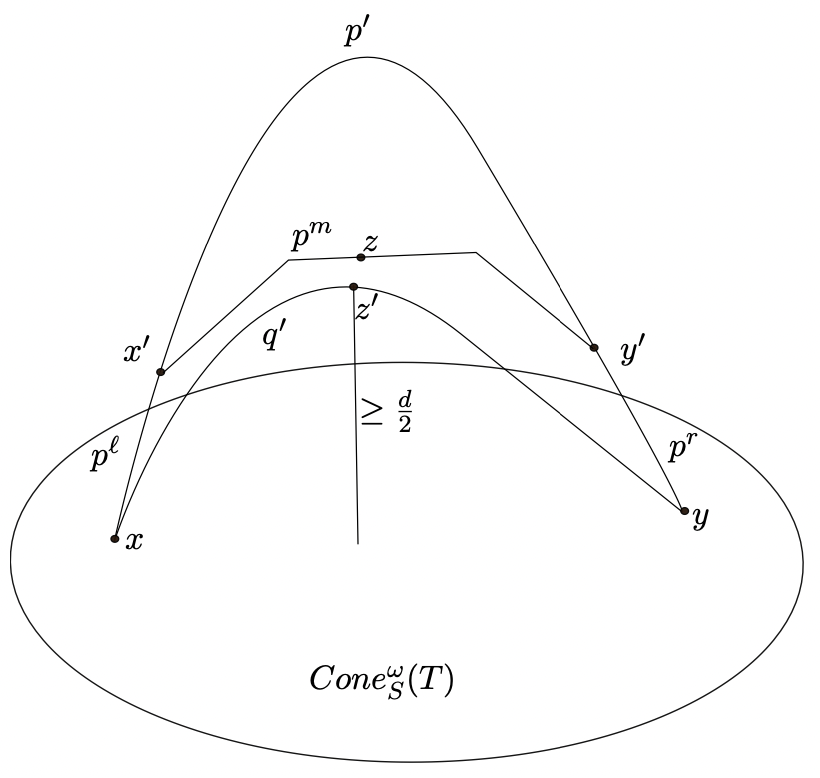}
\caption{Theorem 5.9}
\end{figure}

\begin{proof}
Let $p$ be a simple path in $Cone^{\omega}(S)$ starting and ending in $Cone^{\omega}_{S}(T)$ but not entirely contained in $Cone^{\omega}_{S}(T)$. As $Cone^{\omega}_{S}(T)$ is closed, there is a subpath $p'$ of $p$ which starts and ends in $Cone^{\omega}_{S}(T)$ but no interior point of $p'$ is in $Cone^{\omega}_{S}(T)$. Let $x$ be the initial point of $p$ and let $y$ be the terminal point of $p$. Let $x',y'$ be points on $p'$ such that $$\textrm{max}\{d_{S}^{\omega}(x,x'),d_{S}^{\omega}(y,y')\} < \frac{d_{S}^{\omega}(x,y)}{2},$$ and let $p^{l},p^{r}$ be limit geodesics from $x$ to $x'$ and from $y'$ to $y$ respectively. Let $p^{m}$ be a concatenation of limit geodesics connecting $x'$ to $y'$ avoiding $Cone^{\omega}_{S}(T)$. Such a path exists by Lemma 5.6 as $Cone^{\omega}_{S}(T)$ is closed. The concatenation of $p^{l}$ $p^{m}$ and $p^{r}$ may not be simple, so we let $a$ be the first point of $p^{l}$ on $p^{m}$, and $b$ be the last point of $p^{r}$ on $p^{m}$. By the choice of $x'$ and $y'$, $p^{\ell}$ does not intersect $p^{r}$, so we can obtain a simple path by following $p^{\ell}$ from $x$ to $a$, $p^{m}$ from $a$ to $b$, and $p^{r}$ from $b$ to $y$. Call this concatenation $q$.

Let $z$ be a point on $q$ such that $d_{S}^{\omega}(z,Cone^{\omega}_{S}(T)) = d > 0.$ Using lemma 5.8, we can find a path $q'$ such that $q'$ is a $C$ bi-Lipschitz path which is a concatenation of limit geodesics, and the Hausdorff distance between $q$ and $q'$ is less than $\frac{d}{2}$. Thus, there is a point $z'$ on $q'$ such that $d_{S}^{\omega}(z,z') \leq d/2,$ so $d_{S}^{\omega}(z',Cone^{\omega}_{S}(T)) \geq d/2.$ 

Finally we can apply Lemma 5.9 to this new path $q'$ to get that $q' = \lim^{\omega}(q_{n})$ with each $q_{n}$ being a $C'$ bi-Lipschitz path starting and ending in $T$. Thus, as $T$ is Morse, each path is in some fixed neighborhood of $T$. This implies that $q = \lim^{\omega}(q_n)$ is entirely contained in $Cone^{\omega}_{S}(T)$, a contradiction.

Thus, if $T$ is Morse in $S$, then $Cone^{\omega}_{S}(T)$ is strongly convex in $Cone^{\omega}(S)$.
\end{proof}

\begin{defn}
A subgroup $H$ of a group $G$ with finite generating set $X$ is called $\textit{strongly quasi-convex}$ if it is Morse as a subspace of the Cayley graph $G$ with respect to $X$.
\end{defn}

Note that if $H$ is a subgroup of $G$, then for any two points $(h_{i})^{\omega},(k_{i})^{\omega}$ in $Cone^{\omega}_{G}(H)$ there exists an isometry of $Cone^{\omega}(G)$ fixing $Cone^{\omega}_{G}(H)$ which sends $(h_{i})^{\omega}$ to $(k_{i})^{\omega}$. Thus, we can combine the previous two results to give:

\begin{thm}
A subgroup $H$ of a group $G$ is strongly quasi-convex if and only if $Cone^{\omega}_{G}(H)$ is strongly convex in $Cone^{\omega}(G)$ for all ultrafilters $\omega$.
\end{thm}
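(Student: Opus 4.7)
The plan is to derive this theorem directly by combining Theorem 5.3 and the preceding ``Morse implies strongly convex'' theorem, applied with $S=\Gamma(G,X)$ (the Cayley graph for some finite generating set $X$ of $G$) and $T=H$ viewed as a subset of the vertex set. The standard identifications are in place: $G$ and $\Gamma(G,X)$ have canonically isometric asymptotic cones, and under this isometry $Cone^{\omega}_{G}(H)$ corresponds to $Cone^{\omega}_{\Gamma(G,X)}(H)$; moreover $\Gamma(G,X)$ is a geodesic metric space and $H$ is a closed (discrete) subset, so the common hypotheses of both preceding theorems hold.

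The forward direction is then immediate. If $H$ is strongly quasi-convex, the definition of strong quasi-convexity says that $H$ is Morse in $\Gamma(G,X)$, and the Morse-implies-strongly-convex theorem gives that $Cone^{\omega}_{G}(H)$ is strongly convex in $Cone^{\omega}(G)$ for every non-principal ultrafilter $\omega$.

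For the converse I would verify the transitivity-of-isometries hypothesis of Theorem 5.3. Given any two points $p=(h_{i})^{\omega}$ and $q=(k_{i})^{\omega}$ of $Cone^{\omega}_{G}(H)$ with $h_{i},k_{i}\in H$, the natural candidate is the map $\phi\colon (x_{i})^{\omega}\mapsto (k_{i}h_{i}^{-1}x_{i})^{\omega}$, i.e., left multiplication by the sequence $(k_{i}h_{i}^{-1})$. Left invariance of the word metric makes $\phi$ a well-defined isometry of $Cone^{\omega}(G)$; because each $k_{i}h_{i}^{-1}$ lies in $H$, $\phi$ preserves $H$ setwise as a subset of $G$ and therefore restricts to a self-map of $Cone^{\omega}_{G}(H)$; and by construction $\phi(p)=q$. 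Applying Theorem 5.3 to this setup then concludes that $H$ is Morse in $\Gamma(G,X)$, i.e., strongly quasi-convex.

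Since the theorem is essentially an assembly of the section's two main technical results, I do not anticipate any genuine obstacle. The only step deserving a brief check is that the sequence $(k_{i}h_{i}^{-1})$ genuinely induces an isometry on the level of the cone, i.e., that left multiplication by it preserves the admissible sequences in $G^{\mathbb{N}}_{(e)}$; this follows from the fact that $p$ and $q$ represent points of the cone (so $d(e,h_{i})/i$ and $d(e,k_{i})/i$ are bounded) together with left invariance of the word metric and the triangle inequality.
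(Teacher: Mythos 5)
Your proposal is correct and follows essentially the same route as the paper: the forward direction is Theorem 5.10 applied to $S=\Gamma(G,X)$, $T=H$, and the converse is Theorem 5.3, whose isometry hypothesis is verified exactly as the paper does (implicitly), via left multiplication by the sequence $(k_{i}h_{i}^{-1})$ of elements of $H$. Your explicit check that this sequence induces a well-defined isometry of the cone preserving $Cone^{\omega}_{G}(H)$ is a detail the paper only asserts, so no discrepancy.
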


We conclude by proving a large class of groups cannot contain infinite infinite index strongly quasi convex subgroups.
\begin{thm}
If a path connected metric space $S$ contains a proper closed strongly convex subspace $T$ consisting of more than one point, then $S$ contains a cut point.
\end{thm}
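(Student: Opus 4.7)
The plan is to exhibit an explicit cut point by isolating the ``first'' moment at which a path from outside $T$ meets $T$. Since $T$ is proper, fix $s\in S\setminus T$, and since $S$ is path connected and $T\neq\emptyset$, choose a path $\gamma\colon[0,1]\to S$ with $\gamma(0)=s$ and $\gamma(1)\in T$. Because $T$ is closed, $\gamma^{-1}(T)$ is a nonempty closed subset of $[0,1]$ not containing $0$, so $\tau:=\inf\gamma^{-1}(T)>0$ and $t^*:=\gamma(\tau)\in T$. By construction $\gamma|_{[0,\tau]}$ is a path from $s$ to $t^*$ whose image meets $T$ only at $t^*$. Since $|T|>1$, pick any $t'\in T\setminus\{t^*\}$; the goal is to show that $s$ and $t'$ lie in different connected components of $S\setminus\{t^*\}$, so that $t^*$ is a cut point.

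Suppose toward a contradiction there is a path $\sigma\colon[0,1]\to S\setminus\{t^*\}$ with $\sigma(0)=s$ and $\sigma(1)=t'$. Applying the same ``first hit'' construction to $\sigma$, set $\eta:=\inf\sigma^{-1}(T)$ and $\tilde t:=\sigma(\eta)\in T$, so that $\sigma|_{[0,\eta]}$ meets $T$ only at the endpoint $\tilde t$. Since $\sigma$ avoids $t^*$ by hypothesis, necessarily $\tilde t\neq t^*$. Concatenating the reverse of $\gamma|_{[0,\tau]}$ with $\sigma|_{[0,\eta]}$ then produces a continuous path $\beta$ from $t^*$ to $\tilde t$ whose image is contained in $\gamma([0,\tau])\cup\sigma([0,\eta])$.

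At this point I would invoke the classical arc theorem: in a Hausdorff space, every path between two distinct points contains a simple subpath with the same endpoints and image inside the image of the original path. Applied to $\beta$, this yields a simple path $\alpha$ from $t^*$ to $\tilde t$ with $\alpha([0,1])\subseteq\gamma([0,\tau])\cup\sigma([0,\eta])$. Strong convexity of $T$ now forces $\alpha([0,1])\subseteq T$. However, by construction $\gamma([0,\tau])\cap T=\{t^*\}$ and $\sigma([0,\eta])\cap T=\{\tilde t\}$, so $\alpha([0,1])\subseteq\{t^*,\tilde t\}$. This is impossible because $\alpha$ has connected, nondegenerate image containing two distinct points, giving the desired contradiction.

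The main subtlety is the extraction of the simple arc $\alpha$: the concatenation $\beta$ passes through $s\notin T$ and will in general fail to be injective, so one cannot simply declare it simple and invoke strong convexity directly. The arc theorem is exactly the tool that provides a genuine simple path whose image still sits inside $\gamma([0,\tau])\cup\sigma([0,\eta])$, and this containment is what makes the two-point intersection $\{t^*,\tilde t\}$ with $T$ a contradiction. Everything else---the ``first hit'' construction exploiting closedness of $T$, the observation that $\tilde t\neq t^*$, and the final topological impossibility---is routine bookkeeping once the arc is in hand.
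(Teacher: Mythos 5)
Your proof is correct and follows essentially the same strategy as the paper: the first point $t^*$ at which a path from $s\in S\setminus T$ meets the closed set $T$ is the cut point, and any path in $S\setminus\{t^*\}$ from $s$ to another point of $T$ would produce a simple path between two points of $T$ that leaves $T$, contradicting strong convexity. The only difference is technical: where the paper builds the simple path by an explicit ``last intersection point'' surgery on the two paths, you extract an arc via the classical arc theorem, which handles the non-injectivity of the concatenation more cleanly, and your final contradiction (the arc's image would be the disconnected two-point set $\{t^*,\tilde t\}$) is a harmless variant of the paper's observation that the constructed path passes through a point outside $T$.
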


\begin{figure}[h]
\centering
\def\svgwidth{7cm}
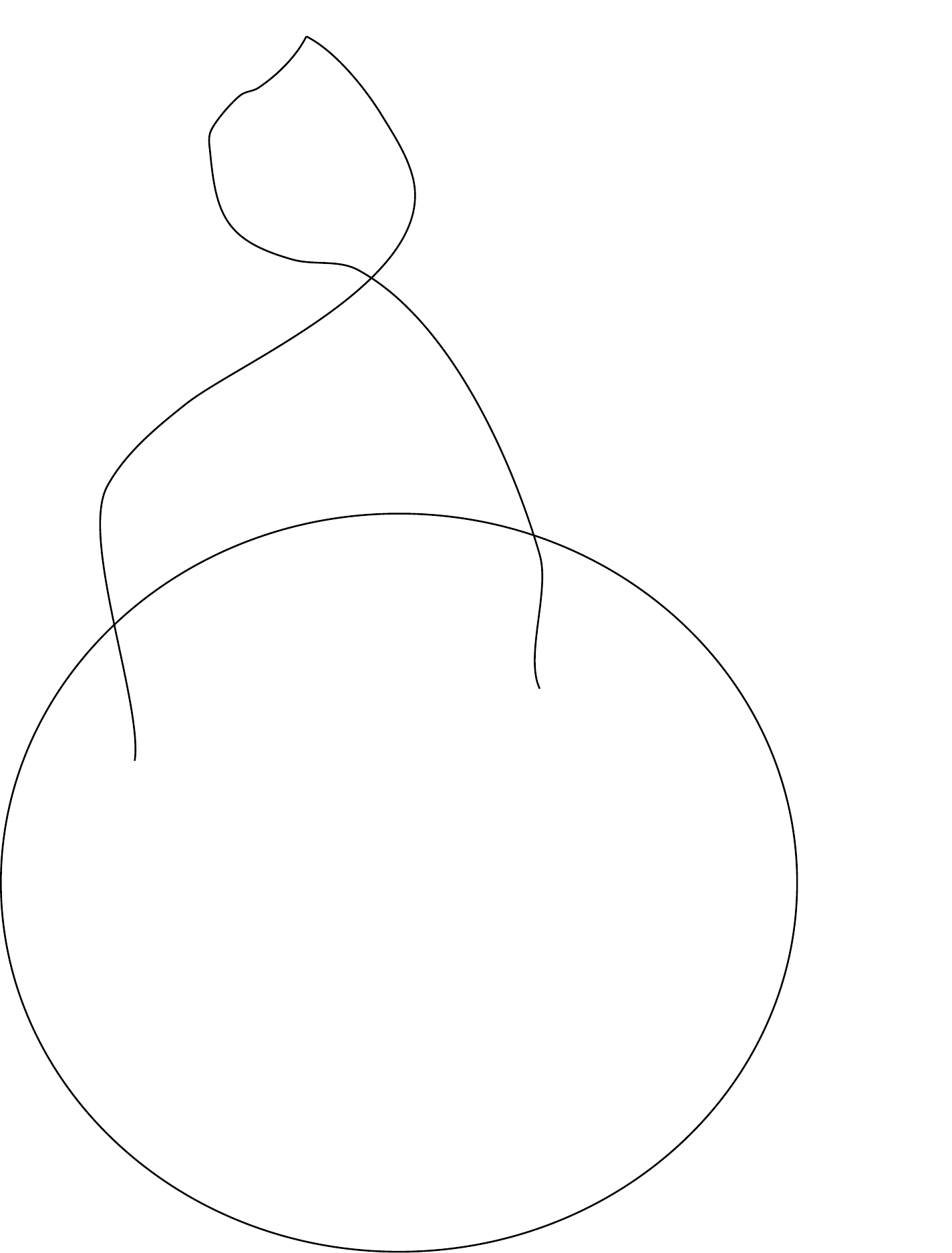
\caption{Theorem 5.13}
\end{figure}

\begin{proof}
Let $s \in S \setminus T$, and let $t \in T$. Let $p \colon [0,\ell] \rightarrow S$ be a simple path connecting $s$ and $t$. Let $t_{1} = \min\{a \in [0,\ell] \mid p(a) \in T\}$. This is well-defined as $T$ is closed. We will show that $p(t_{1})$ is a cut point. Let $t_{2} \neq p(t_{1})$ be a point in $T$. If $p(t_{1})$ is not a cut point, then there exists a path $p' \colon [0,k]$ connecting $s$ and $t_{2}$ such that $p(t_{1})$ is not on $p'$. Let $t_{3} = \min \{a \in [0,k] \mid p'(a) \in T\}$. Let $s_{1} = \max\{a \in [0,t_{1}] \mid p(s_{1}) \in p'\}$ Create a simple path by following $p$ from $t_{1}$ to $s_{1}$ and then following $p'$ from $s_{1}$ to $t_{2}$. This is a simple path connecting two points of $T$ that is not entirely contained in $T$, a contradiction.
\end{proof}

Sapir and Drutu\cite{trees} proved the following theorem.

\begin{thm}
If $G$ is a non-virtually cylic group satisfying a law, then no asymptotic cone of $G$ contains a cut point.
\end{thm}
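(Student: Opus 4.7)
The plan is to argue by contradiction: assume $G$ is finitely generated, not virtually cyclic, satisfies a nontrivial law $w(x_{1},\dots,x_{k})=1$, and that some asymptotic cone $Y = Cone^{\omega}(G)$ contains a cut point $c$. The strategy is to leverage the cut point to produce in $G$ a two-generator free subsemigroup, which is impossible in any group satisfying a law.

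First, I would exploit homogeneity: because $G$ acts on itself by left translations, this induces a transitive action by isometries of $G^{\omega}$ on $Y$, so every point of $Y$ is a cut point. From this and the fact that $Y$ is a complete geodesic space, one obtains a canonical tree-graded decomposition of $Y$ into \emph{pieces} (maximal subsets such that no point of a piece is a cut point of that piece), with the property that any two distinct pieces meet in at most one point and any simple loop in $Y$ lies in a single piece. This is exactly the setting of Drutu--Sapir tree-graded spaces, and it allows one to talk about a well-defined ``transition point'' projection from $Y$ onto any geodesic.

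Second, I would translate this tree-graded geometry back to $G$. Fix two group elements whose images in $Y$ under suitable rescaling produce biinfinite limit geodesics that are transverse to the piece structure (such elements exist because $G$ is not virtually cyclic, so it contains at least two independent quasi-geodesic rays). Using the piece structure and the transition-point projection, I would argue that for sufficiently large integers $N$, the left translates $g^{N}$ and $h^{N}$ act on the tree-graded cone with disjoint domains of attraction and repulsion in distinct pieces, in the manner required for a ping-pong argument. The key technical estimate is a Morse-type lemma for tree-graded spaces: any quasi-geodesic between two points on the same geodesic piece stays close to that piece.

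Third, I would apply ping-pong, not in the cone directly (where elements are only defined up to the equivalence relation) but in $G$, using the cone estimates to control word lengths of products $g^{\pm N}$ and $h^{\pm N}$. The conclusion is that $\langle g^{N}, h^{N}\rangle$ contains a free subsemigroup of rank two, hence a two-variable non-law-satisfying subset. Substituting two generators of this free subsemigroup into appropriate positions of $w$ contradicts the assumption that $w=1$ holds identically in $G$, completing the proof. The main obstacle is the second step: the existence of ``hyperbolic'' elements of $G$ acting on $Y$ with the required ping-pong dynamics, because one must pass from purely metric information (cut points) to algebraic control of individual group elements, and this is where the non-virtually-cyclic hypothesis is genuinely used.
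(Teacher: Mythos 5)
First, note that the paper does not prove this statement at all: it is quoted as an external result of Drutu and Sapir \cite{trees}, so there is no internal proof to compare against. Judged on its own terms, your proposal starts on the right track (homogeneity makes every point a cut point, and a homogeneous complete geodesic space with a cut point is tree-graded with respect to its maximal pieces -- this is exactly the Drutu--Sapir setup), but it has two genuine gaps, one dynamical and one algebraic.

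The dynamical gap is in your second and third steps. A fixed element $g \in G$ acts on $Cone^{\omega}(G)$ by left translation as the \emph{identity}: it displaces every point of $G$ by at most $|g|_{X}$, and after dividing by the scaling sequence $d_{i} \rightarrow \infty$ this displacement vanishes in the limit. The same is true of $g^{N}$ for any fixed $N$. So there are no ``domains of attraction and repulsion'' for powers of individual group elements, and the ping-pong you describe cannot get started. The isometries that act nontrivially on the cone are represented by \emph{sequences} $(g_{i})$ with $|g_{i}|_{X}$ comparable to $d_{i}$, i.e.\ elements of the ultrapower $G^{\omega}$, and the ping-pong must be run there. This is not fatal to the strategy -- a law is a universal sentence and so passes to ultrapowers by \L{}o\'{s}'s theorem -- but your plan to ``translate back to $G$'' and control word lengths of actual elements $g^{\pm N}, h^{\pm N}$ is precisely the step that fails. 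The algebraic gap is in your endgame: a free subsemigroup of rank two does \emph{not} contradict a law. For example, $BS(1,2)$ is metabelian, hence satisfies the law $[[x_{1},x_{2}],[x_{3},x_{4}]]=1$, is not virtually cyclic, and contains a free subsemigroup of rank two (as does every finitely generated solvable group that is not virtually nilpotent, by Rosenblatt). Since a law $w$ generally involves inverses, substituting positive words in semigroup generators into $w$ need not produce a nonempty positive word, so no contradiction results. The actual argument must land on a free \emph{subgroup} of rank two (or on one of the other horns of the trichotomy for isometric actions on the $\mathbb{R}$-tree obtained by collapsing the pieces: global fixed point, fixed end, or invariant line, each of which is excluded by transitivity and by $G$ not being virtually cyclic). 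As it stands, both the mechanism for producing the free object and the reason it contradicts the law are broken.
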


If $H$ is an infinite, infinite index subgroup of a finitely-generated group $G$, then it is easy to see that $Cone^{\omega}_{G}(H)$ is a proper subspace of $Cone^{\omega}(G)$ that consists of more than one point. Thus, we can combine the previous two results to get the following corollary.

\begin{cor}
If $G$ is a finitely-generated group containing a non-degenerate strongly quasi-convex subgroup $H$, then $G$ does not satisfy a law.
\end{cor}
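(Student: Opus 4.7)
The plan is to chain Theorem 5.12, Theorem 5.13, and the Drutu--Sapir Theorem 5.14. Fix any non-principal ultrafilter $\omega$. Theorem 5.12 tells us $Cone^{\omega}_{G}(H)$ is strongly convex in $Cone^{\omega}(G)$, and Lemma 4.12 says it is closed. Granting that $Cone^{\omega}_{G}(H)$ is also a proper subspace consisting of more than one point, Theorem 5.13 produces a cut point in $Cone^{\omega}(G)$. Since $H$ is infinite of infinite index, $G$ cannot be virtually cyclic (in a virtually cyclic group every infinite subgroup has finite index). The contrapositive of Theorem 5.14 then forces $G$ to not satisfy a law.

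The only substantive thing to verify is the ``non-degenerate'' input — that $Cone^{\omega}_{G}(H)$ is a proper subspace with at least two points — which is the content of the ``easy to see'' sentence preceding the corollary. For more than one point: strong quasi-convexity of $H$ implies $H$ is finitely generated and undistorted, so for a finite generating set $Y$ of $H$ inside a finite generating set $X$ of $G$ there is a constant $C$ with $|h|_X \leq |h|_Y \leq C|h|_X$ for all $h \in H$. Since $H$ is infinite and finitely generated there exists $h_i \in H$ with $|h_i|_Y = i$ for every $i$; undistortion then forces $|h_i|_X / i \in [1/C, 1]$, so $(h_i)^{\omega}$ lies in $Cone^{\omega}_{G}(H)$ at positive distance from the basepoint $(e)^{\omega}$. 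For properness: since $H$ has infinite index in $G$, the Schreier coset graph on left cosets is infinite, connected, and locally finite, hence contains vertices at arbitrarily large distance from the trivial coset. Lifting a coset-geodesic of length $i$ yields $g_i \in G$ with $|g_i|_X = i = d_G(g_i, H)$, so for every $(h_i)^{\omega} \in Cone^{\omega}_{G}(H)$ one has $d^{\omega}_{G}((g_i)^{\omega}, (h_i)^{\omega}) \geq \lim^{\omega} d_G(g_i, H)/i = 1$, placing $(g_i)^{\omega}$ outside $Cone^{\omega}_{G}(H)$.

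The main obstacle is exactly this bookkeeping check on the hypotheses of Theorem 5.13 together with excluding the virtually cyclic case in Theorem 5.14; once those are in place, the three cited theorems chain together cleanly to deliver the corollary.
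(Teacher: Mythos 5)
Your proposal follows the paper's own route exactly: it chains Theorem 5.12, Theorem 5.13, and the Drutu--Sapir result (Theorem 5.14), which is precisely the combination the paper invokes. The additional bookkeeping you supply---that $Cone^{\omega}_{G}(H)$ is a proper subspace with more than one point (via undistortion and the Schreier graph) and that $G$ is not virtually cyclic---is exactly the content the paper dismisses as ``easy to see,'' and your verification of it is correct.
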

\bibliographystyle{plain}
  \bibliography{acones}
\end{document}